\theoremstyle{plain}
\newtheorem{cor}{Corollary}
\newtheorem{prop}[cor]{Proposition}
\theoremstyle{definition}
\numberwithin{cor}{section}
\numberwithin{equation}{section}
\DeclareMathOperator{\tr}{tr}
\DeclareMathOperator{\C}{C}
\DeclareMathOperator{\USC}{USC}
\DeclareMathOperator{\LSC}{LSC}
\DeclareMathOperator{\BUC}{BUC}
\DeclareMathOperator{\Lip}{Lip}
\newcommand{\ue}{u^\epsilon_k}
\newcommand{\vd}{v^\delta_k}
\newcommand{\wk}{w^\delta_k}
\newcommand{\abs}[1]{\lvert#1\rvert}
\newcommand{\norm}[1]{\lVert#1\rVert}
\def\Xint#1{\mathchoice
{\XXint\displaystyle\textstyle{#1}}%
{\XXint\textstyle\scriptstyle{#1}}%
{\XXint\scriptstyle\scriptscriptstyle{#1}}%
{\XXint\scriptscriptstyle\scriptscriptstyle{#1}}%
\!\int}
\def\XXint#1#2#3{{\setbox0=\hbox{$#1{#2#3}{\int}$ }
\vcenter{\hbox{$#2#3$ }}\kern-.6\wd0}}
\def\dashint{\Xint-}
\title{Stochastic Homogenization of Monotone Systems of Viscous Hamilton-Jacobi Equations with Convex Nonlinearities}
\author{Benjamin J. Fehrman}
\date{May 7, 2012}
\subjclass[2010]{35B27, 35B30, 35G55}
\keywords{stochastic homogenization, viscous Hamilton-Jacobi system, monotone system, initial boundary layer}
\address{Department of Mathematics, The University of Chicago, 5734 S. University Avenue, Chicago IL, 60637.}
\email{bfehrman@math.uchicago.edu}
\begin{document}

\begin{abstract}
We consider the homogenization of monotone systems of viscous Hamilton-Jacobi equations with convex nonlinearities set in the stationary, ergodic setting.  The primary focus of this paper is on collapsing systems which, as the microscopic scale tends to zero, average to a deterministic scalar Hamilton-Jacobi equation.  However, our methods also apply to systems which do not collapse and, as the microscopic scale tends to zero, average to a deterministic system of Hamilton-Jacobi equations.
\end{abstract}

\maketitle

\section{Introduction}

In this paper we study the limiting behavior, as $\epsilon\rightarrow 0$, of the solutions \begin{equation*}\label{solution} u^\epsilon=\left(u^\epsilon_1,\ldots,u^\epsilon_m\right):\mathbb{R}^n\times[0,\infty)\times\Omega\rightarrow\mathbb{R}^m\end{equation*} of degenerate elliptic, monotone systems of the form \begin{equation}\label{col}\left\{\begin{array}{ll} u^\epsilon_{k,t}-\epsilon\tr(A_k(\frac{x}{\epsilon},\omega)D^2u^\epsilon_k)+H_k(Du^\epsilon_k,u^\epsilon,\frac{u^\epsilon_k-u^\epsilon_j}{\epsilon},\frac{x}{\epsilon},\omega)=0 & \textrm{on}\;\;\mathbb{R}^n\times(0,\infty),\\ u^\epsilon=u_0 & \textrm{on}\;\;\mathbb{R}^n\times\left\{0\right\},\end{array}\right.\end{equation} where to simplify notation we write $\frac{u^\epsilon_k-u^\epsilon_j}{\epsilon}$ for the vector $\left(\frac{u^\epsilon_k-u^\epsilon_1}{\epsilon},\ldots,\frac{u^\epsilon_k-u^\epsilon_m}{\epsilon}\right)\in\mathbb{R}^{m-1}$.

The Hamiltonians $H_k=H_k(p,r,s,y,\omega)$ and the diffusion matrices $A_k=A_k(y,\omega)$ are random processes depending on an underlying probability space $(\Omega,\mathcal{F},\mathbb{P})$.  The intuition is that $\omega\in\Omega$ indexes the collection of all systems like (\ref{col}).  We postpone the precise assumptions until Section 2, but remark here that the $A_k$'s and $H_k$'s are stationary ergodic in $(y,\omega)$ and the $H_k$'s are convex and coercive in $p$ and $s$.  Due to the stationarity and ergodicity, the $u^\epsilon$'s ``see'' the entirety of the systems indexed by $\Omega$ and, as $\epsilon\rightarrow 0$, average out to a deterministic limit which, in view of the coercivity, must be scalar.  There is also an initial boundary layer which forces the limiting solution to satisfy a new initial data.

The result is that, as $\epsilon\rightarrow 0$, the $u^\epsilon$'s converge almost surely to a deterministic scalar function which solves a deterministic Hamilton-Jacobi equation for an appropriate initial condition.  More precisely, we identify a deterministic Hamiltonian $\overline{H}:\mathbb{R}^n\times\mathbb{R}\rightarrow\mathbb{R}$ such that, as $\epsilon\rightarrow 0$, for each $k\in\left\{1,\ldots,m\right\}$ and almost surely in $\Omega$, \begin{equation}\label{introcon}\begin{array}{ll} u^\epsilon_k\rightarrow\overline{u} & \textrm{locally uniformly on}\;\;\;\mathbb{R}^n\times(0,\infty),\end{array}\end{equation} where $\overline{u}$ is the solution of the scalar initial value problem \begin{equation}\label{introeq} \left\{\begin{array}{ll}\overline{u}_t+\overline{H}(D\overline{u},\overline{u})=0 & \textrm{on}\;\;\;\mathbb{R}^n\times(0,\infty), \\ \overline{u}=\underline{u}_0 & \textrm{on}\;\;\;\mathbb{R}^n\times\left\{0\right\},\end{array}\right.\end{equation} with $\underline{u}_0:\mathbb{R}^n\rightarrow\mathbb{R}$ the point-wise minimum, for $u_0=(u_{1,0},\ldots,u_{m,0})$, \begin{equation*}\label{introinit}\underline{u}_0=\min_{1\leq k\leq m}u_{k,0}.\end{equation*}

Concrete examples of systems like (\ref{col}) and (\ref{nocol}) below are \begin{equation*}\label{neutrons} u^\epsilon_{k,t}-\epsilon\Delta u^\epsilon_k+\abs{Du^\epsilon_k}^2+\sum_{i\neq k}c_{ki}(\frac{x}{\epsilon},\omega)e^{(u^\epsilon_k-u^\epsilon_i)/\epsilon}=V_k(\frac{x}{\epsilon},\omega),\end{equation*} which arises in the study of neutron transport, e.g., Armstrong and Souganidis \cite{AS1}, as well as coupled systems like \begin{equation*}\label{couple} u^\epsilon_{k,t}-\epsilon d_k(\frac{x}{\epsilon},\omega)\Delta \ue+H_k(Du^\epsilon_k, u^\epsilon,\frac{x}{\epsilon},\omega)+\sum_{i\neq k}c_{k,i}(\frac{x}{\epsilon},\omega)(\frac{u^\epsilon_k-u^\epsilon_i}{\epsilon})_+^2=0,\end{equation*}  and systems of switching games like \begin{equation*}\label{switching} u^\epsilon_{k,t}+\min\left\{\max\left\{-\epsilon\Delta\ue+G_k(D\ue, u^\epsilon_k, \frac{x}{\epsilon},\omega), u^\epsilon_k-M_k(u^\epsilon,\frac{x}{\epsilon},\omega)\right\},u^\epsilon_k-N_k(u^\epsilon, \frac{x}{\epsilon},\omega)\right\}=0, \end{equation*} with $M_k(u^\epsilon,\frac{x}{\epsilon},\omega)=\min\left\{\;u^\epsilon_i-g_{k,i}(\frac{x}{\epsilon},\omega)\;|\;j\neq k\;\right\}$ and $N_k(u^\epsilon, \frac{x}{\epsilon},\omega)=\max\left\{\;u^\epsilon_i-h_{k,i}(\frac{x}{\epsilon},\omega)\;|\;j\neq k\;\right\}$,  which were studied, for $\epsilon=1$, by Engler and Lenhart \cite{EL} and Lenhart \cite{L} and Capuzzo-Dolcetta and Evans \cite{CDE}, Yamada \cite{Y}, Ishii and Koike \cite{IK2}, Lenhart and Yamada \cite{LY} and Lenhart and Belbas \cite{LB} respectively.

The identification of $\overline{H}$ and proof of homogenization follow the methods of Armstrong and Souganidis \cite{AS1,AS}, with more references given later in the introduction.  We use, for each $(p,r)\in\mathbb{R}^n\times\mathbb{R}$, $\omega\in\Omega$  and $\delta>0$, the approximate macroscopic system \begin{equation*}\label{exacell}\begin{array}{ll} \delta\vd-\tr(A_k(y,\omega) D^2\vd)+H_k(p+D\vd,\hat{r},\vd-v^\delta_j,y,\omega)=0 & \textrm{on}\;\;\mathbb{R}^n \end{array}\end{equation*}  with $\hat{r}=(r,\ldots,r)\in\mathbb{R}^m$, to characterize $\overline{H}(p,r)$, almost surely, by \begin{equation*}\label{exaconv} \overline{H}(p,r)=\limsup_{\delta\rightarrow 0}-\delta v^\delta_1(0,\omega).\end{equation*}

We then use the asymptotic properties of the solution $m_\mu=(m_{1,\mu},\ldots,m_{m,\mu})$ of the so called ``metric system,'' \begin{equation*}\label{exametric} \left\{\begin{array}{ll} -\tr(A_k(y,\omega)D^2m_{k,\mu})+H_k(p+Dm_{k,\mu},\hat{r}, m_{k,\mu}-m_{j,\mu},y,\omega)=\mu & \textrm{on}\;\;\;\mathbb{R}^n\setminus D, \\ m_\mu=0 & \textrm{on}\;\;\;\partial D, \end{array}\right. \end{equation*}  for $D$ either a ball or a point (See Section 6 for details), to prove that, almost surely in $\Omega$ and for each $R>0$, \begin{equation}\label{exaupgrade} \lim_{\delta\rightarrow 0}\sup_{y\in B_{R/\delta}}\abs{\overline{H}(p,r)+\delta v^\delta_1(y,\omega)}=0. \end{equation}

If there exist estimates, uniform in $\epsilon$, for the $\abs{u^\epsilon_{k,t}}$ and $\abs{Du^\epsilon_k}$, then (\ref{exaupgrade}) is sufficient to apply the perturbed test function method and conclude the proof.  However, such estimates are not available in general, unless the initial condition $u_0$ satisfies \begin{equation*}\label{introscalar}\begin{array}{ll} u_{i,0}=u_{j,0} & \textrm{for each}\;\;\; i,j\in\left\{1,\ldots,m\right\}.\end{array}\end{equation*}  We therefore must use additional tools from the theory of viscosity solutions to identify the correct initial condition $\underline{u}_0$ and prove the convergence, as $\epsilon\rightarrow 0$, of the $u^\epsilon_k$'s.

A simple example of the above difficulty is illustrated by the system, for $m=2$, \begin{equation}\label{introex}\left\{\begin{array}{ll} u^\epsilon_{k,t}-\epsilon\Delta\ue+\abs{D\ue}^2+\left(\frac{u^\epsilon_k-u^\epsilon_j}{\epsilon}\right)_+^2=0 & \textrm{on}\;\;\;\mathbb{R}^n\times(0,\infty), \\ u^\epsilon_1=1, u^\epsilon_2=0 & \textrm{on}\;\;\;\mathbb{R}^n\times\left\{0\right\},\end{array}\right. \end{equation} with solution \begin{equation*}\label{introex1}\begin{array}{lll}  u^\epsilon_1(x,t)= (1+t/\epsilon^2)^{-1} & \textrm{and} & u^\epsilon_2(x,t)=0. \end{array}\end{equation*}  It is immediate that, as $\epsilon\rightarrow 0$, $u^\epsilon_{1,t}$ is unbounded from below and, for $k=1,2$, \begin{equation*}\label{introex2}\begin{array}{ll} u^\epsilon_k\rightarrow 0 & \textrm{locally uniformly on}\;\;\;\mathbb{R}^n\times (0,\infty).\end{array}\end{equation*}  Notice that $\overline{u}=0$ is the the unique solution of the limiting problem \begin{equation*}\label{introex3}\left\{\begin{array}{ll} \overline{u}_t+\abs{D\overline{u}}^2=0 & \textrm{on}\;\;\;\mathbb{R}^n\times (0,\infty),\\ \overline{u}=\underline{u}_0=0 & \textrm{on}\;\;\;\mathbb{R}^n\times\left\{0\right\}.\end{array}\right.\end{equation*}  We prove that the analogous behavior is replicated for general such systems in the random setting.

Finally, we remark that our methods are easily adapted to systems of the form \begin{equation}\label{nocol}\left\{\begin{array}{ll} u^\epsilon_{k,t}-\epsilon\tr(A_k(\frac{x}{\epsilon},\omega)D^2u^\epsilon_k)+H_k(Du^\epsilon_k,u^\epsilon,\frac{x}{\epsilon},\omega)=0 & \textrm{on}\;\;\mathbb{R}^n\times (0,\infty), \\ u^\epsilon=u_0 & \textrm{on}\;\;\mathbb{R}^n\times\left\{0\right\},\end{array}\right.\end{equation} which, as $\epsilon\rightarrow 0$, do not collapse and homogenize to a deterministic system, as well as the corresponding time-independent analogues of (\ref{col}) and (\ref{nocol}).

The homogenization of scalar equations in stationary ergodic random environments has been studied extensively.  The linear case was first analyzed by Papanicolaou and Varadhan \cite{PV,PV1} and Kozlov \cite{K}, and general variational problems were considered by Del Maso and Modica \cite{DM,DM1}.

More recently, results for Hamilton-Jacobi equations were first obtained by Souganidis \cite{S} and Rezakhanlou and Tarver \cite{RT}, for viscous Hamilton-Jacobi equations by Lions and Souganidis \cite{LS1,LS2} and Kosygina, Rezakhanlou and Varadhan \cite{KRV}, and for viscous Hamilton-Jacobi equations in unbounded environments by Armstrong and Souganidis \cite{AS}.

The homogenization of systems has been most extensively studied in the periodic setting.  The homogenization of linear elliptic systems has been considered, for example, by Avellaneda and Lin \cite{AL}.  Camilli, Ley and Loreti \cite{CLL} considered the homogenization of a first-order monotone system of Hamilton-Jacobi equations.  And, more recently, Mitake and Tran \cite{MT} considered a weakly-coupled, collapsing system of first-order Hamilton-Jacobi equations.

In the stationary, ergodic setting, Armstrong and Souganidis \cite{AS1} considered a time-independent, uniformly-elliptic version of (\ref{col}).  In addition to characterizing the limiting behavior, as $\epsilon\rightarrow 0$, of the system's principle eigenvalue, they prove that if (\ref{introcon}) is known, a priori, then the limit satisfies the time-independent version of (\ref{introeq}).  We prove (\ref{introcon}) in general and obtain a similar result in Proposition \ref{colnotresult}.  However, beginning with the study of the approximate macroscopic system, our analysis differs from \cite{AS1} due to the absence of uniform ellipticity and the existence of an initial boundary layer for general parabolic problems.

The paper is organized as follows.  In Section 2, we introduce the notation, state the precise hypotheses for the coefficients $A_k$ and $H_k$, and recall two ergodic theorems used throughout the paper.  In Section 3, we study the approximate macroscopic problem.   The effective Hamiltonian $\overline{H}$ and its properties are the subjects of Sections 4 and 5.  We study the metric system in Sections 6 and 7.  The effective Hamiltonian is further investigated in Section 8.  We identify the initial data $\underline{u}_0$ and conclude the proof of the main result in Section 9.  We present precise results for (\ref{nocol}) and the time-independent analogues in Section 10.

\subsection*{Acknowledgments}

I would like to thank Scott Armstrong and Panagiotis Souganidis for suggesting this problem and for many useful conversations.  Furthermore, I would like to thank Panagiotis Souganidis for his numerous suggestions and advice throughout the process of writing and editing this paper.

\section{Preliminaries}

\subsection{Notation}

Elements of $\mathbb{R}^n$ and $[0,\infty)$ are denoted by $x$ and $y$ and $t$ respectively and, for $r\in\mathbb{R}$, $\hat{r}=(r,\ldots,r)\in\mathbb{R}^m$.  For $r,s\in\mathbb{R}^l$, we write $r\leq s$ if $r_i\leq s_i$ for each $i\in\left\{1,\ldots,l\right\}$.  We write $Dv$ and $v_t$ for the derivative of the scalar function $v$ with respect to $x\in\mathbb{R}^n$ and $t\in[0,\infty)$, while $D^2v$ stands for the Hessian of $v$.  Regarding the Hamiltonians $H_k$ we write $p$ for the dependence on $D\ue$ and $r$ for the dependence on $u^\epsilon$.  The variable $s\in\mathbb{R}^{m-1}$ is used for the differences $\epsilon^{-1}(u^\epsilon_k-u^\epsilon_j).$  We use the notation $D_pH_k$ for the derivative of $H_k$ with respect to the gradient variable, while other derivatives are expressed analogously.  The spaces of $k\times l$ and $k\times k$ symmetric matrices with real entries are respectively written $\mathcal{M}^{k\times l}$ and $\mathcal{S}(k)$.  If $M\in\mathcal{M}^{k\times l}$, then $M^t$ is its transpose and $\abs{M}$ is its norm $\abs{M}=\tr(MM^t)^{1/2}.$  If $M$ is a square matrix, we write $\tr(M)$ for the trace of $M$.  For $U\subset\mathbb{R}^n$, $\USC(U;\mathbb{R}^d)$, $\LSC(U;\mathbb{R}^d)$, $\BUC(U;\mathbb{R}^d)$, $\Lip(U;\mathbb{R}^d)$ and $\C^k(U;\mathbb{R}^d)$ are the spaces of upper-semicontinuous, lower-semicontinuous, bounded continuous, Lipschitz continuous and $k$-continuously differentiable functions on $U$ with values in $\mathbb{R}^d$.  Moreover, $B_R$ and $B_R(x)$ are respectively the open balls of radius $R$ centered at zero and $x\in\mathbb{R}^n$.  We denote by $\Omega\supset\Omega_1\supset\Omega_2\supset\Omega_3$ nested subsets of full probability.  Finally, throughout the paper we write $C$ for constants that may change from line to line but are independent of $\omega\in\Omega$ unless otherwise indicated.

\subsection{The random medium}

The random medium is described by the probability space $(\Omega,\mathcal{F},\mathbb{P})$.  An element $\omega\in\Omega$ then corresponds to a particular realization of the environment.

It is assumed that $\Omega$ is equipped with a group $\left(\tau_y\right)_{y\in\mathbb{R}^n}$ of transformations $\tau_y:\Omega\rightarrow\Omega$ which are \begin{equation}\label{transgroup} \textrm{measure-preserving and ergodic,}\end{equation} where the latter means that, if $E\subset\Omega$ satisfies $\tau_y(E)=E$ for each $y\in\mathbb{R}^n$ then $\mathbb{P}(E)=0$ or $\mathbb{P}(E)=1$.

A process $f:\mathbb{R}^n\times\Omega\rightarrow\mathbb{R}$ is said to be stationary if the law of $f(y,\cdot)$ is independent of $y\in\mathbb{R}^n$, a property which can be reformulated using  $\left(\tau_y\right)_{y\in\mathbb{R}^n}$ as \begin{equation}\label{statergodic} \begin{array}{ll} f(y+z,\cdot)=f(y,\tau_z\cdot) & \textrm{for all}\;\;\;y,z\in\mathbb{R}^n. \end{array}\end{equation}  To simplify statements we say that a process is stationary ergodic if it satisfies (\ref{statergodic}) and $\left(\tau_y\right)_{y\in\mathbb{R}^n}$ is ergodic.

The following ergodic theorem will be used frequently in this paper.  A proof may be found in Becker \cite{B}.  Here $\mathbb{E}f$ denotes the expectation of a random variable $f$.

\begin{prop}\label{ergodic1}  Assume (\ref{transgroup}) and suppose that $f:\mathbb{R}^n\times\Omega\rightarrow\mathbb{R}$ is stationary and $\mathbb{E}\abs{f(0,\cdot)}<\infty$.  There exists a subset $\tilde{\Omega}\subset\Omega$ such that $\mathbb{P}(\tilde{\Omega})=1$ and, for every bounded domain $V\subset\mathbb{R}^n$ and $\omega\in\tilde{\Omega}$, $$\lim_{t\rightarrow\infty}\dashint_{tV}f(y,\omega)dy=\mathbb{E}f.$$ \end{prop}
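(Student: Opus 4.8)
The statement is a multiparameter (spatial) ergodic theorem for a stationary process under a measure-preserving, ergodic $\mathbb{R}^n$-action, and the plan is to deduce it from the classical multiparameter ergodic theorem together with a standard reduction from discrete to continuous averaging. First I would reduce to the case $V = (0,1)^n$, the unit cube: a general bounded domain $V$ can be sandwiched between finite unions of small cubes from a fine dyadic grid, and since $f \ge 0$ may be assumed (split $f = f^+ - f^-$ and treat each part), the averages over $tV$ are controlled above and below by averages over these cube-unions, with the error going to zero as the mesh refines because $\abs{\partial V}$ has measure zero relative to $\abs{V}$. So it suffices to show $\dashint_{tQ} f(y,\omega)\,dy \to \mathbb{E}f$ for $Q$ a fixed cube and almost every $\omega$.

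Next I would pass from the continuous average to a discrete one. Define the cell average $g(\omega) = \int_{(0,1)^n} f(y,\omega)\,dy$; since $f$ is stationary and $\mathbb{E}\abs{f(0,\cdot)} < \infty$, Fubini gives that $g \in L^1(\Omega)$, and the stationarity relation (\ref{statergodic}) shows $g(\tau_z \omega) = \int_{(0,1)^n} f(y+z,\omega)\,dy = \int_{z + (0,1)^n} f(y,\omega)\,dy$, i.e. $g$ records the average of $f$ over the unit cube translated by $z$. For integer $N$, the average of $f$ over $(0,N)^n$ is exactly $N^{-n}\sum_{z \in \{0,\ldots,N-1\}^n} g(\tau_z\omega)$. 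The multiparameter pointwise ergodic theorem for the $\mathbb{Z}^n$-action generated by the commuting maps $\tau_{e_1},\ldots,\tau_{e_n}$ — which is ergodic as a $\mathbb{Z}^n$-action because the ambient $\mathbb{R}^n$-action is ergodic and any $\mathbb{Z}^n$-invariant set, being invariant under a dense subgroup, is $\mathbb{R}^n$-invariant up to null sets by continuity of the action in measure — then yields, for a.e. $\omega$, that this discrete average converges to $\mathbb{E}g = \mathbb{E}f$. Finally, for non-integer $t$ one interpolates: $\lfloor t\rfloor \le t \le \lceil t\rceil$ and, using $f \ge 0$, the average over $tQ$ is trapped between $(\lfloor t\rfloor/\lceil t\rceil)^n$ and $(\lceil t\rceil/\lfloor t\rfloor)^n$ times the averages over the nearby integer cubes, and both ratios tend to $1$.

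The main obstacle — and the only genuinely delicate point — is the interchange of the two limiting operations: for each fixed cube one gets a full-measure set, but the conclusion must hold simultaneously for \emph{all} bounded domains $V$ on a single full-measure set $\tilde\Omega$. This is handled by the sandwiching argument: take $\tilde\Omega$ to be the full-measure set on which the discrete ergodic theorem holds for $g$ \emph{and} for the countable family of indicator-type averages associated with all dyadic sub-cubes of all integer cubes; since every bounded $V$ and every $\ep > 0$ admit a dyadic approximation from inside and outside with relative error $< \ep$, monotonicity of the integral of $f \ge 0$ forces $\limsup_t$ and $\liminf_t$ of $\dashint_{tV} f$ to both lie within $C\ep$ of $\mathbb{E}f$, and letting $\ep \downarrow 0$ along a sequence finishes the proof. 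One should also record that the set $\tilde\Omega$ so constructed is independent of $V$, which is exactly the content of the statement.
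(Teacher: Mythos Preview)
The paper does not give its own proof of this proposition; it simply states the result and refers the reader to Becker \cite{B}. So there is no in-paper argument to compare against, and your sketch stands on its own as an attempt to supply what the paper omits.

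Your overall architecture --- reduce to cubes via dyadic sandwiching after splitting $f=f^+-f^-$, pass to a discrete $\mathbb{Z}^n$-average of the cell function $g(\omega)=\int_{(0,1)^n} f(y,\omega)\,dy$, invoke a multiparameter ergodic theorem, then interpolate in $t$ --- is the standard route and is essentially correct. There is, however, one concrete error. You justify ergodicity of the $\mathbb{Z}^n$-action by saying a $\mathbb{Z}^n$-invariant set is ``invariant under a dense subgroup'' of $\mathbb{R}^n$; but $\mathbb{Z}^n$ is not dense in $\mathbb{R}^n$, and in fact the restriction of an ergodic $\mathbb{R}^n$-action to $\mathbb{Z}^n$ need not be ergodic (take $\Omega=\mathbb{R}^n/\mathbb{Z}^n$ with translation: the $\mathbb{R}^n$-action is ergodic while the $\mathbb{Z}^n$-action is the identity). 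What rescues your argument is not ergodicity of the $\mathbb{Z}^n$-action but the specific structure of $g$: the discrete ergodic theorem gives $N^{-n}\sum_{z} g(\tau_z\omega)\to \mathbb{E}[g\mid \mathcal{I}_{\mathbb{Z}^n}]$, and because this limit equals $\lim_N N^{-n}\int_{(0,N)^n} f(0,\tau_y\omega)\,dy$, a boundary-volume argument (controlled by the maximal inequality) shows this limit is $\mathbb{R}^n$-invariant, hence constant and equal to $\mathbb{E}f$ by the assumed $\mathbb{R}^n$-ergodicity. Alternatively, you can bypass the discrete reduction entirely and quote the Wiener multiparameter ergodic theorem for $\mathbb{R}^n$-flows directly, which is closer to what Becker proves.
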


The subadditive ergodic theorem is also used in this paper.  A proof may be found in Akcoglu and Krengel \cite{AK}.  Its statement requires more terminology.  Let $\mathcal{I}$ denote the class of subsets of $[0,\infty)$ consisting of finite unions of intervals of the form $[a,b)$ and let $\left(\sigma_t\right)_{t\geq 0}$ be a semigroup of measure-preserving transformations $\sigma_t:\Omega\rightarrow\Omega$.  A map $Q:\mathcal{I}\rightarrow L^1(\Omega,\mathbb{P})$ such that:  \begin{center}\begin{enumerate} \item $Q(I)(\sigma_t\omega)=Q(I+t)\omega\;\;\;\textrm{almost surely in}\;\Omega,$

\item $\mathbb{E}\abs{Q(I)}\leq C\abs{I},$ for some $C>0$ and every $I\in\mathcal{I}$,

\item If $I_1,\ldots,I_k\in\mathcal{I}$ are disjoint then, $Q(\cup_{j=1}^k)\leq \sum_{j=1}^k Q(I_j),$ \end{enumerate} \end{center}  is called a \emph{continuous subadditive process} with respect to the semigroup $\left(\sigma_t\right)_{t\geq 0}$.

\begin{prop}\label{subadditiveergodic}  If $Q$ is a continuous subadditive process with respect to the semigroup $\left(\sigma_t\right)_{t\geq 0}$, there exists a random variable $a$ which is invariant under $\left(\sigma_t\right)_{t\geq 0}$ such that, almost surely, $$\lim_{t\rightarrow\infty}\frac{1}{t}Q([0,t))(\omega)=a(\omega).$$  If $\left(\sigma_t\right)_{t\geq 0}$ is ergodic, then $a$ is constant.\end{prop}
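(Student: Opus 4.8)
To establish Proposition~\ref{subadditiveergodic} I would reduce the continuous-parameter statement to the classical (discrete-parameter) subadditive ergodic theorem of Kingman and then interpolate across non-integer times.

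First, fix the single measure-preserving map $T=\sigma_1$ and set $g_n(\omega)=Q([0,n))(\omega)$ for $n\in\N$. Decomposing $[0,n+m)=[0,n)\cup[n,n+m)$ and using property (3) together with property (1) in the form $Q([0,m))(\sigma_n\omega)=Q([n,n+m))(\omega)$ yields
$$g_{n+m}(\omega)\le g_n(\omega)+g_m(T^n\omega).$$
Property (2) shows that each $g_n\in L^1(\Omega,\Prob)$ and, crucially, that $\inf_n \tfrac1n\,\E g_n\ge -C>-\infty$, so the averages cannot run off to $-\infty$. Kingman's theorem then provides a $T$-invariant $a\in L^1(\Omega,\Prob)$ with $\tfrac1n\,g_n\to a$ almost surely and in $L^1$, and $\E a=\lim_n \tfrac1n\,\E g_n$.

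Second, I would pass from integer to real times. For $t\ge 0$ write $t=\lfloor t\rfloor+\{t\}$ with $\{t\}\in[0,1)$. Subadditivity gives the upper bound
$$\tfrac1t\,Q([0,t))(\omega)\le \tfrac1t\,Q([0,\lfloor t\rfloor))(\omega)+\tfrac1t\,Q\big([\lfloor t\rfloor,t)\big)(\omega),$$
and, comparing $[0,\lfloor t\rfloor+1)$ with $[0,t)\cup[t,\lfloor t\rfloor+1)$, the matching lower bound. The ``bulk'' terms converge to $a(\omega)$ by Step~1, so everything reduces to showing that the fractional contributions $Q([\lfloor t\rfloor,t))(\omega)$ and $Q([t,\lfloor t\rfloor+1))(\omega)$ are $o(t)$ almost surely. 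By property (1) these equal $Q([0,s))(\sigma_\tau\omega)$ with $s\in[0,1)$ and $\tau\in[\lfloor t\rfloor,\lfloor t\rfloor+1]$, so one needs an almost sure bound of the form $\tfrac1n\sup\{\,\abs{Q(J)(\omega)} : J\subseteq[n,n+2),\ \abs{J}\le 1\,\}\to 0$. This is the main obstacle: the subadditivity axiom by itself provides no upper bound for $Q$ on a short interval, and obtaining such control almost surely — via property (2), a further use of subadditivity, and a maximal/Borel--Cantelli argument — is exactly the technical core carried out in \cite{AK}.

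Finally, I would upgrade invariance from $T=\sigma_1$ to the whole semigroup. Once $\tfrac1t\,Q([0,t))\to a$ is known, the elementary inequality $Q([0,s+t))(\omega)\le Q([0,s))(\omega)+Q([0,t))(\sigma_s\omega)$, divided by $t$ and passed to the limit $t\to\infty$, gives $a(\omega)\le a(\sigma_s\omega)$ for every $s\ge 0$ and almost every $\omega$; since each $\sigma_s$ is measure-preserving and $a\in L^1$, equality of the expectations forces $a\circ\sigma_s=a$ almost surely. Hence $a$ is invariant under $(\sigma_t)_{t\ge0}$, and if the semigroup is ergodic then each super-level set $\{a>c\}$ is essentially invariant, hence null or co-null, so $a$ is almost surely constant.
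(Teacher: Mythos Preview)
The paper does not actually prove this proposition: it is stated as a background result, with the sentence ``A proof may be found in Akcoglu and Krengel \cite{AK}'' immediately preceding it. There is therefore no ``paper's own proof'' to compare your proposal against.

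That said, your outline is a reasonable sketch of the standard reduction --- discretize via Kingman, interpolate, then upgrade invariance --- and you are candid about where the real content lies. The genuine gap in your write-up is exactly the one you flag yourself: controlling the fractional remainders $Q([\lfloor t\rfloor,t))$ and $Q([t,\lfloor t\rfloor+1))$ almost surely. Property~(2) gives only an $L^1$ bound on short intervals, and subadditivity gives no upper bound on $Q$ over a subinterval in terms of $Q$ over a larger one; turning this into an almost sure $o(t)$ statement requires a nontrivial maximal inequality, which is indeed the heart of \cite{AK}. Since you explicitly defer that step to the reference, your proposal is best read as a roadmap rather than a self-contained proof --- which, given that the paper itself simply cites \cite{AK}, is entirely appropriate here. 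Your Step~3 (invariance under the full semigroup via the one-sided inequality plus equality of expectations) is clean and correct.
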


\subsection{The assumptions}

We state below a number of assumptions for the $A_k$'s and $H_k$'s.  Some are necessary to insure the well-posedness of (\ref{col}), while others are crucial for the homogenization, collapse and identification of the appropriate initial condition.

Because every assumption must hold for all $k\in\left\{1,\ldots,m\right\}$, to avoid cumbersome statements, we do not repeat this quantifier for each of the assertions below.  Furthermore, we make the convention that, unless otherwise indicated, each statement holds globally for $p\in\mathbb{R}^n$, $r\in\mathbb{R}^m$, $s\in\mathbb{R}^{m-1}$, $y\in\mathbb{R}^n$ and $\omega\in\Omega$.

For each fixed $(p,r,s)$, \begin{equation}\label{stationary1} \begin{array}{llll} (y,\omega)\rightarrow A_k(y,\omega) & \textrm{and} & (y,\omega)\rightarrow H_k(p,r,s,y,\omega) & \textrm{are stationary.} \end{array}\end{equation}

For each fixed $(r,s,y,\omega)$, \begin{equation}\label{gradconvex}\begin{array}{ll} p\rightarrow H_k(p,r,s,y,\omega) & \textrm{is convex,} \end{array}\end{equation} and for each fixed $(p,r,y,\omega)$,  \begin{equation}\label{difconvex}\begin{array}{ll} s\rightarrow H_k(p,r,s,y,\omega) & \textrm{is convex.}\end{array}\end{equation}

The Hamiltonians $H_k$ are coercive in $p$ and $s$, i.e., for each $R>0$, there exist constants $C_1,C_2,C_3>0$ and $\gamma_k>1$ satisfying, for all $r\in B_R$, \begin{equation}\label{coercive} C_1\abs{p}^{\gamma_k}+C_2\max_{i\neq k}(s_i)_+-C_3\leq H_k(p,r,s,y,\omega).\end{equation}

We also assume that the matrix $A_k$ has a Lipschitz continuous square root in the sense that \begin{equation}\label{matsquare}  A_k(y,\omega)=\Sigma_k(y,\omega)\Sigma_k^t(y,\omega) \end{equation} with, for $C>0$ and every $\omega\in\Omega$, \begin{equation}\label{lipsigma} \norm{\Sigma_k(\cdot,\omega)}_{C^{0,1}(\mathbb{R}^n;M^{n_k\times s_k})}\leq C.\end{equation}

Moreover, for each fixed $(p,y,\omega)$, \begin{equation}\label{hamincrease} H_k(p,r,s,y,\omega)\;\; \textrm{is nondecreasing in}\;\; r_k\;\;\textrm{and}\;\; s_i\end{equation} and \begin{equation}\label{hamdecrease} H_k(p,r,s,y,\omega)\;\;\textrm{is nonincreasing in}\;\; r_i\;\;\textrm{for}\;\; i\neq k.\end{equation}  In addition, the Hamiltonians are bounded for bounded $(p,r,s)$, i.e., for each $R>0$ there exists $C_4=C_4(R)>0$ such that, for all $(p,r,s)\in B_R\times B_R\times B_R$, \begin{equation}\label{bounded} \abs{H_k(p,r,s,y,\omega)}\leq C_4, \end{equation} and, locally in $p$, Lipschitz continuous, i.e., for each $R>0$ there exists $C_5=C_5(R)>0$ such that, for all $(r_i,s_i)\in B_R\times B_R$, \begin{equation}\label{hamcon}
|H_k(p_1,r_1,s_1,y_1,\omega)-H_k(p_2,r_2,s_2,y_2,\omega)|<C_5[(1+|p_1|+|p_2|)^{\gamma_k-1}|p_1-p_2|+\abs{r_1-r_2}+\abs{s_1-s_2}+\abs{y_1-y_2}]. \end{equation}

We remark that (\ref{hamincrease}) and (\ref{hamdecrease}) imply that the Hamiltonian $H_k$ is monotone in the sense that, for $r,q\in\mathbb{R}^m$, if $r\leq q$ and $r_k=q_k$ then, \begin{equation}\label{monotone} H_k(p,r,s,y,\omega)\geq H_k(p,q,s,y,\omega). \end{equation}  The last assumption we need for the $H_k$'s is that, for $r,q\in\mathbb{R}^m$, if $r_k-q_k=\max_i|r_i-q_i|$ then, \begin{equation}\label{colmonstrict} H_k(p,r,s,y,\omega)-H_k(p,q,s,y,\omega)\geq 0. \end{equation}

Finally, we assume that \begin{equation}\label{initialbuc} u_0\in\BUC(\mathbb{R}^n;\mathbb{R}^m).\end{equation}

Among the above, (\ref{coercive}), (\ref{matsquare}), (\ref{lipsigma}), (\ref{hamincrease}), (\ref{hamdecrease}), (\ref{bounded}), (\ref{hamcon}), (\ref{monotone}), (\ref{colmonstrict})  and (\ref{initialbuc}) are necessary for the well-posedness of (\ref{col}) (See Ishii and Koike \cite{IK}). The rest, i.e., (\ref{stationary1}), (\ref{gradconvex}) and (\ref{difconvex}), are necessary for the homogenization and (\ref{coercive}) for the collapse of the system.

Throughout the paper we will assume each of the statements (\ref{transgroup})-(\ref{initialbuc}).  To avoid repeating all of these, we introduce a steady assumption. \begin{equation}\label{steady} \textrm{Assume}\:(\ref{transgroup}),\: (\ref{statergodic}),\: (\ref{stationary1}),\: \ldots,\: (\ref{colmonstrict}),\: (\ref{initialbuc}). \end{equation}

\section{The Macroscopic System}

We study here, for each $(p,r)\in\mathbb{R}^n\times\mathbb{R}$, $\omega\in\Omega$ and $\delta>0$, the solutions $v^\delta=(v^\delta_1,\ldots,v^\delta_m)$ of the approximate macroscopic system \begin{equation}\label{cell}\begin{array}{ll} \delta\vd-\tr(A_k(y,\omega)D^2\vd)+H_k(p+D\vd,\hat{r},\vd-v^\delta_j,y,\omega)=0 & \textrm{on}\;\;\;\mathbb{R}^n, \end{array}\end{equation} where $\hat{r}=(r,\ldots,r)\in\mathbb{R}^m$.

\begin{prop}\label{cellsol}  Assume (\ref{steady}).  For each $(p,r)\in\mathbb{R}^n\times\mathbb{R}$, $\delta>0$ and $\omega\in\Omega$, there exists a unique solution $v^\delta$ of (\ref{cell}) such that, for $C=C(p,r,n)>0$, \begin{equation}\label{gradbound} \begin{array}{lll} \max_{1\leq k\leq m}\norm{\delta\vd}_{L^\infty(\mathbb{R}^n)}\leq C & \textrm{and} &  \max_{1\leq k\leq m} \norm{D\vd}_{L^\infty(\mathbb{R}^n)}\leq C. \end{array}\end{equation}  Furthermore, for each $R>0$, there exists $C=C(p,r,n,R)>0$ such that $$\max_{1\leq j,k\leq m}\norm{\vd-v^\delta_j}_{L^\infty(B_R)}\leq C.$$  Finally, the process $v^\delta:\mathbb{R}^n\times\Omega\rightarrow\mathbb{R}^m$ is stationary in the sense of (\ref{stationary1}).\end{prop}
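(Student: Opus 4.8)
The plan is to read off existence and uniqueness from the theory of weakly coupled monotone systems and then to establish the three a priori estimates in the order $\norm{\delta v^\delta_k}_\infty$, $\norm{Dv^\delta_k}_\infty$, $\norm{v^\delta_k-v^\delta_j}_\infty$, deducing stationarity at the end from uniqueness. For well-posedness I would invoke the comparison principle and Perron's method for monotone systems in the form of Ishii and Koike \cite{IK}: the zeroth order term $\delta v^\delta_k$ with $\delta>0$ makes (\ref{cell}) strictly monotone in $v^\delta_k$; through the slot $s_i=v^\delta_k-v^\delta_i$, assumption (\ref{hamincrease}) makes the $k$-th equation nondecreasing in $v^\delta_k$ and nonincreasing in $v^\delta_i$ for $i\neq k$, which is the monotonicity needed for the coupling; the second order term is handled by the Lipschitz square root (\ref{matsquare})--(\ref{lipsigma}); and (\ref{hamcon}), (\ref{bounded}) supply the remaining moduli, the unbounded domain being handled by the standard $\alpha(1+\abs{y}^2)^{1/2}$ perturbation. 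Comparison yields uniqueness among bounded solutions. For existence I observe that the constant vectors $\pm C_4\delta^{-1}(1,\dots,1)$ are respectively a super- and a subsolution of (\ref{cell}): at a constant the gradient and diffusion terms vanish and the coupling slot is $0$, so (\ref{bounded}) gives $\abs{H_k(p,\hat r,0,y,\omega)}\le C_4$ with $C_4=C_4(\abs p+\abs r)$. Perron's method then produces a solution $v^\delta$ squeezed between these barriers; by comparison it is the unique bounded one, and $\max_k\norm{\delta v^\delta_k}_{L^\infty(\mathbb{R}^n)}\le C_4$. Measurability of $\omega\mapsto v^\delta$ is inherited from the construction.

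The gradient bound is the step I expect to be the genuine obstacle. I would prove it by a Lipschitz estimate of Ishii--Lions type for coercive, possibly degenerate equations, carried out for all $m$ components simultaneously (one doubles $\max_k(v^\delta_k(y)-v^\delta_k(z))$ against a bounded concave test function $\varphi(\abs{y-z})$ with $\varphi'(0^+)=L$ large, plus the usual $\alpha(1+\abs{y}^2)^{1/2}$ perturbation, and argues that $L$ cannot be too large). The key inputs are: the superlinear coercivity (\ref{coercive}) in $p$ (here $\gamma_k>1$ is essential), which is what forces the slope down; the Lipschitz square root (\ref{matsquare})--(\ref{lipsigma}) together with the concavity of $\varphi$, which control the second order terms $\tr(A_k(\bar y)X)-\tr(A_k(\bar z)Y)$ produced by the theorem on sums; the local Lipschitz continuity (\ref{hamcon}) of $H_k$ in $(p,r,s,y)$, where the dependence on $s=v^\delta_k-v^\delta_\bullet$ causes no difficulty precisely because every component is doubled, so $\abs{s_{\bar y}-s_{\bar z}}\le 2\max_i\abs{v^\delta_i(\bar y)-v^\delta_i(\bar z)}$ is penalized; and the bound $\norm{\delta v^\delta_k}_\infty\le C_4$ from the previous step to kill the zeroth order contribution. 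The outcome is $\max_k\norm{Dv^\delta_k}_{L^\infty(\mathbb{R}^n)}\le C=C(p,r,n)$.

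For the difference bound I would use coercivity (\ref{coercive}) in the $s$-variable together with the gradient bound just obtained; this in fact yields the estimate on all of $\mathbb{R}^n$. Let $M^\ast=\sup_{y\in\mathbb{R}^n}\big(\max_l v^\delta_l(y)-\min_l v^\delta_l(y)\big)$, which is finite since $v^\delta$ is bounded. Choose $y_j$ with $\max_l v^\delta_l(y_j)-\min_l v^\delta_l(y_j)>M^\ast-1/j$ and, along a subsequence, a fixed index $k$ with $v^\delta_k(y_j)=\max_l v^\delta_l(y_j)$. Since $v^\delta_k$ is bounded, $y\mapsto v^\delta_k(y)-\tfrac12\abs{y-y_j}^2$ attains its maximum over $\mathbb{R}^n$ at some $z_j$; comparing values at $z_j$ and $y_j$ and using the gradient bound gives $\abs{z_j-y_j}\le C(p,r,n)$, and the paraboloid $y\mapsto v^\delta_k(z_j)-\tfrac12\abs{z_j-y_j}^2+\tfrac12\abs{y-y_j}^2$ touches $v^\delta_k$ from above at $z_j$. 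Inserting it into the $k$-th subsolution inequality, bounding the diffusion term by (\ref{lipsigma}) and using $\delta v^\delta_k(z_j)\ge -C_4$, and then applying (\ref{coercive}), bounds $\max_{i\neq k}\big(v^\delta_k(z_j)-v^\delta_i(z_j)\big)_+$ by a constant $C(p,r,n)$. On the other hand $v^\delta_k(z_j)\ge v^\delta_k(y_j)\ge v^\delta_l(y_j)$ for every $l$ while $\min_i v^\delta_i(z_j)\le\min_l v^\delta_l(y_j)+C\abs{z_j-y_j}$, so $\max_{i\neq k}\big(v^\delta_k(z_j)-v^\delta_i(z_j)\big)_+\ge\big(\max_l v^\delta_l(y_j)-\min_l v^\delta_l(y_j)\big)-C>M^\ast-1/j-C$. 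Letting $j\to\infty$ gives $M^\ast\le C(p,r,n)$, hence $\max_{j,k}\norm{v^\delta_k-v^\delta_j}_{L^\infty(\mathbb{R}^n)}\le C(p,r,n)$, which in particular contains the asserted bound on each $B_R$.

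Finally, for stationarity, fix $z\in\mathbb{R}^n$. By (\ref{statergodic}) the functions $y\mapsto v^\delta_k(y+z,\omega)$ solve (\ref{cell}) with $\omega$ replaced by $\tau_z\omega$ and satisfy the same bounds, so by uniqueness $v^\delta_k(y+z,\omega)=v^\delta_k(y,\tau_z\omega)$ for all $y\in\mathbb{R}^n$, which is exactly stationarity in the sense of (\ref{stationary1}).
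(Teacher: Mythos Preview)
Your treatment of existence, uniqueness, the bound on $\delta v^\delta$, and stationarity matches the paper's: both invoke the comparison principle and Perron's method from \cite{IK}, use constant barriers, and read off stationarity from uniqueness. For the gradient bound the paper simply cites \cite{AS1,AS,LS1}; your Ishii--Lions outline is a standard route, with one caveat: you propose to control the coupling via the Lipschitz estimate (\ref{hamcon}) in $s$, but the constant $C_5$ there depends on a bound for $\abs{s}$, and at this stage you only know $\abs{v^\delta_k-v^\delta_i}\le 2C_4/\delta$. The clean fix is to use the monotonicity (\ref{hamincrease}) instead: at the doubled maximum the index $k$ realizes $\max_i(v^\delta_i(\bar y)-v^\delta_i(\bar z))$, so $s_{\bar y}\ge s_{\bar z}$ componentwise and the $s$-contribution has a favorable sign and can be dropped. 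With that adjustment your gradient argument goes through uniformly in $\delta$.

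The genuine difference is the estimate on $v^\delta_k-v^\delta_j$. The paper's proof is an integral argument: it multiplies the $k$-th equation by the test function $\phi_R(v^\delta_k-\min_i v^\delta_i)^\alpha$, uses the coercivity (\ref{coercive}) in $s$ and the equivalence of viscosity and distributional solutions for linear inequalities, and after H\"older and Cauchy obtains $\int_{B_R}(v^\delta_k-\min_i v^\delta_i)^{\alpha+1}\,dy\le C(R,\alpha)$; taking $\alpha=2n-1$ and invoking Morrey's inequality together with the gradient bound gives the $L^\infty(B_R)$ estimate, with a constant that depends on $R$. Your argument is instead a pointwise viscosity one: touch $v^\delta_k$ from above by a paraboloid near an approximate maximizer of the oscillation, and read the bound directly from (\ref{coercive}). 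This is more elementary (no distributional interpretation, no Morrey), and it yields a constant independent of $R$, hence a global bound $\max_{j,k}\norm{v^\delta_k-v^\delta_j}_{L^\infty(\mathbb{R}^n)}\le C(p,r,n)$, which is formally stronger than the stated $B_R$ estimate. (In fact the paper's $R$-dependent bound, being uniform in $\omega$, also upgrades to a global one by stationarity, so the two conclusions ultimately agree.) The paper's method has the advantage of being robust in integral settings and is reused verbatim later in the paper for the rescaled quantities $\epsilon^{-1}(m^\epsilon_{k,\mu}-m^\epsilon_{j,\mu})$ and $\epsilon^{-1}(s^\epsilon_k-s^\epsilon_j)$; your pointwise argument transfers to those settings as well.
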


\begin{proof}  The existence and uniqueness of a solution for (\ref{cell}), for each $(p,r)\in\mathbb{R}^n\times\mathbb{R}$, $\delta>0$ and $\omega\in\Omega$, follow from the standard comparison result and Perron's method (See \cite{IK}).  The stationarity of $v^\delta$ is an immediate consequence of (\ref{stationary1}) and the uniqueness.  Finally, the estimates for the $\delta v^\delta_k$'s and $Dv^\delta_k$'s are obtained almost exactly as in \cite{AS1,AS,LS1}.

The new part of the argument is the inequality on the differences.  Choose a function $\psi\in C^\infty(\mathbb{R}^n)$ satisfying $0\leq \psi \leq 1$ with $\psi=1$ on $\overline{B}_1$ and $\psi=0$ on $\mathbb{R}^n\setminus B_{2}$.  Define $\phi=\psi^4$ and observe, for $C>0$,\begin{equation}\label{auxcut1}  \begin{array}{lll} \abs{D\phi}^2\leq C\phi^{\frac{3}{2}} & \textrm{and} & \abs{D^2\phi}\leq C\phi^\frac{1}{2}. \end{array}\end{equation}

Fix $R\geq 1$, $\alpha\geq 1$ and $k\in\left\{1,\ldots,m\right\}$.  Let $\phi_R(\cdot)=\phi(\frac{\cdot}{R})$ and observe that $\phi_R$ satisfies (\ref{auxcut1}) with the same constant.  We multiply the $k$-th component of (\ref{cell}) by the test function $$w^\delta_k=\phi_R(\vd-\min_{1\leq i\leq m} v^\delta_i)^\alpha$$ and integrate over $\mathbb{R}^n$.

Using (\ref{coercive}), we have $$\begin{array}{ll} C_2(v^\delta_k-\min_{1\leq i\leq m}v^\delta_i)-C_3\leq H_k(p+Dv^\delta_k,\hat{r},v^\delta_k-v^\delta_j,y,\omega) & \textrm{on}\;\;\;\mathbb{R}^n\end{array}$$ and, hence, using the equivalence of viscosity and distributional solutions for linear inequalities (See Ishii \cite{I}), \begin{equation*}\label{auxcol1} \int_{\mathbb{R}^n}(\delta\vd)w^\delta_k dy+\int_{\mathbb{R}^n}v^\delta_{k,x_i}(a_{ij}w^k_\delta)_{x_j} dy+C_2\int_{\mathbb{R}^n}\phi_R(\vd-\min_{1\leq i\leq m} v^\delta_i)^{\alpha+1} dy \leq C_3\int_{\mathbb{R}^n} w^\delta_k dy. \end{equation*}

In view of (\ref{gradbound}) and (\ref{auxcut1}), there exists $C>0$ satisfying $$\begin{array}{lll} \abs{Dw^\delta_k} & = & \abs{D\phi_R(v^\delta_k-\min_{1\leq i\leq m}v^\delta_i)^\alpha+\phi_R\alpha(v^\delta_k-\min_{1\leq i\leq m}v^\delta_i)^{\alpha-1}D(v^\delta_k-\min_{1\leq k\leq m}v^\delta_i)} \\ & \leq & C(\alpha\phi_R(v^\delta_k-\min_{1\leq i\leq m}v^\delta_i)^{\alpha-1}+\phi_R^{\frac{3}{4}}(v^\delta_k-\min_{1\leq i\leq m}v^\delta_i)^\alpha).\end{array}$$  Therefore, it follows from (\ref{matsquare}) and (\ref{lipsigma}) that, for $C=C(R,\alpha)>0$, \begin{equation*}\label{auxcol2} \int_{\mathbb{R}^n}\phi_R(\vd-\min_{1\leq i\leq m} v^\delta_i)^{\alpha+1} dy \leq C\int_{\mathbb{R}^n}\phi_R^{\frac{3}{4}}(\vd-\min_{1\leq i\leq m} v^\delta_i)^\alpha dy +C\int_{\mathbb{R}^n}\phi_R(\vd-\min_{1\leq i\leq m} v^\delta_i)^{\alpha-1} dy. \end{equation*}

We use H\"older's inequality and Cauchy's inequality to conclude that, for $C=C(\alpha,R)>0$, \begin{equation*}\label{auxcol3} \int_{B_R} (\vd-\min_{1\leq i\leq m} v^\delta_i)^{\alpha+1} dy \leq \int_{\mathbb{R}^n}\phi_R(\vd-\min_{1\leq i\leq m} v^\delta_i)^{\alpha+1} dy\leq C. \end{equation*}

Fix $\alpha=2n-1$.  In view of (\ref{gradbound}), Morrey's inequality yields, for $C=C(R,n)>0$, \begin{equation*}\label{auxcol4} \norm{\vd-\min_{1\leq i\leq m} v^\delta_i}_{L^\infty(B_R)}\leq \norm{\vd-\min_{1\leq i\leq m} v^\delta_i}_{C^{0,\frac{1}{2}}(B_R)}\leq C.\end{equation*}  Since $k\in\left\{1,\ldots,m\right\}$ was arbitrary, we conclude using the triangle inequality.  \end{proof}

In the following proposition, we show that the solutions $v^\delta$ depend continuously on $(p,r)\in\mathbb{R}^n\times\mathbb{R}$.  Its proof is identical to (\ref{effcon}), and is therefore omitted.  We remark that, because the estimates appearing in Proposition \ref{cellsol} depend on $(p,r)$, we obtain here only local estimates.

\begin{prop}\label{contdep}  Assume (\ref{steady}).  For each $R>0$ and $\omega\in\Omega$, there exists $C=C(R)>0$ such that, if $v^{i,\delta}$ is the solution of (\ref{cell}) corresponding to $(p_i,r_i)\in B_R\times B_R$, then $$\max_{1\leq k\leq m}\norm{\delta v^{1,\delta}_k-\delta v^{2,\delta}_k}_{L^\infty(\mathbb{R}^n)}\leq C\max_{1\leq k\leq m}\left((1+\abs{p_1}+\abs{p_2})^{\gamma_k-1}\abs{p_1-p_2}+\abs{r_1-r_2}\right).$$\end{prop}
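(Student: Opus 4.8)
The plan is to estimate the difference $\delta v^{1,\delta}_k - \delta v^{2,\delta}_k$ by a comparison argument, treating one solution as a perturbed sub/supersolution of the equation satisfied by the other. First I would fix $(p_1,r_1),(p_2,r_2) \in B_R \times B_R$ and set $\theta = \max_{1\le k \le m}\left((1+|p_1|+|p_2|)^{\gamma_k-1}|p_1-p_2| + |r_1-r_2|\right)$; the goal is to show $v^{1,\delta}_k \le v^{2,\delta}_k + C\theta/\delta$ for each $k$ (and the symmetric inequality), which upon multiplying by $\delta$ gives the claim. The natural candidate for a supersolution of the first system is $w_k := v^{2,\delta}_k + C\theta/\delta$ for a suitable constant $C = C(R)$. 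Plugging $w_k$ into the $k$-th equation of the system with data $(p_1,r_1)$, the diffusion term is unchanged since $w_k$ differs from $v^{2,\delta}_k$ by a constant, and the $\delta w_k$ term contributes an extra $C\theta$; it remains to control the change in the Hamiltonian.

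The Hamiltonian comparison uses hypothesis (\ref{hamcon}): switching $p_2 + Dv^{2,\delta}_k$ to $p_1 + Dv^{2,\delta}_k$ costs at most $C_5(1+|p_1 + Dv^{2,\delta}_k| + |p_2 + Dv^{2,\delta}_k|)^{\gamma_k-1}|p_1-p_2|$, and since $\|Dv^{2,\delta}_k\|_{L^\infty}$ is bounded by a constant depending on $(p_2,r_2)$ — hence on $R$ — by Proposition \ref{cellsol}, this is bounded by $C(R)(1+|p_1|+|p_2|)^{\gamma_k-1}|p_1-p_2| \le C(R)\,\theta$; switching $\hat{r}_2$ to $\hat{r}_1$ costs at most $C_5|r_1-r_2| \le C_5\theta$; and the difference-variable argument $w_k - w_j = v^{2,\delta}_k - v^{2,\delta}_j$ is unchanged. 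Thus, choosing $C = C(R)$ large enough that $C \ge C(R) + C_5 + \text{(constant from the Hamiltonian change)}$, the function $w = (w_1,\dots,w_m)$ is a supersolution of the system (\ref{cell}) associated with $(p_1,r_1)$. By the comparison principle for (\ref{cell}) (as invoked in the proof of Proposition \ref{cellsol}), $v^{1,\delta}_k \le w_k = v^{2,\delta}_k + C\theta/\delta$ on $\mathbb{R}^n$. Exchanging the roles of $(p_1,r_1)$ and $(p_2,r_2)$ gives the reverse inequality, and together these yield $\|\delta v^{1,\delta}_k - \delta v^{2,\delta}_k\|_{L^\infty(\mathbb{R}^n)} \le C\theta$.

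One point that needs care is that $w$ must be checked as a supersolution in the full monotone-system sense: one must verify that adding the same positive constant to every component is compatible with the coupling through the difference variables $v^{2,\delta}_k - v^{2,\delta}_j$ and with the monotonicity assumptions (\ref{hamincrease}), (\ref{hamdecrease}). Since the shift is by a constant identical in all components, all the difference arguments $s$ are preserved exactly, and the $r_k$-dependence only enters through $\hat{r}_1$ versus $\hat{r}_2$, which is handled by the $|r_1-r_2|$ term in (\ref{hamcon}); there is no additional monotonicity obstruction. The main technical obstacle — and the reason local rather than global estimates appear — is that the gradient bound $\|Dv^{2,\delta}_k\|_{L^\infty}$ from Proposition \ref{cellsol} depends on $(p_2,r_2)$, so the factor $(1+|p_1 + Dv^{2,\delta}_k| + |p_2 + Dv^{2,\delta}_k|)^{\gamma_k-1}$ can only be absorbed into a constant depending on $R$; tracking this dependence correctly through (\ref{hamcon}) is the crux of the argument.
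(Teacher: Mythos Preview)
Your proposal is correct and follows essentially the same approach as the paper: the paper omits the proof of Proposition~\ref{contdep} and refers to the proof of (\ref{effcon}), which does exactly what you describe---shift one solution by $\pm C\theta/\delta$, use (\ref{hamcon}) together with the gradient bound from Proposition~\ref{cellsol} to verify it is a sub/supersolution of the other system, apply comparison, and then reverse roles. The only cosmetic difference is that the paper subtracts the constant from $v^{1,\delta}$ to obtain a subsolution of the $(p_2,r_2)$-system, whereas you add it to $v^{2,\delta}$ to obtain a supersolution of the $(p_1,r_1)$-system; these are equivalent.
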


\section{The Effective Hamiltonian}

In this section, we construct the deterministic Hamiltonian $\overline{H}(p,r)$ and obtain, on a subset of full probability, a subsolution which grows sublinearly at infinity of the macroscopic system \begin{equation}\label{cellsub}\begin{array}{ll} -\tr(A_k(y,\omega) D^2w_k)+H_k(p+Dw_k,\hat{r},w_k-w_j,y,\omega)=\overline{H}(p,r) & \textrm{on}\;\;\;\mathbb{R}^n. \end{array}\end{equation}

Define $$\overline{H}(p,r,\omega)=\limsup_{\delta\rightarrow 0}-\delta v^\delta_1(0,\omega)$$ for $v^\delta$ the solution of (\ref{cell}) corresponding to $(p,r)$, and observe that Propositions \ref{cellsol} and \ref{contdep} and (\ref{transgroup}) imply $\overline{H}(p,r,\omega)=\overline{H}(p,r)$ is deterministic, satisfying, on a subset full probability, \begin{equation}\label{effectiveham1} \overline{H}(p,r)=\limsup_{\delta\rightarrow 0}-\delta v^\delta_1(0,\omega).\end{equation}  We remark that the choice $-\delta v^\delta_1(0,\omega)$ in (\ref{effectiveham1}) is arbitrary but, in view of Proposition \ref{cellsol}, it does not effect the definition of $\overline{H}$.

Let \begin{equation}\label{auxnorm}\begin{array}{lll} w^\delta=(w^\delta_1,\ldots,w^\delta_m)  & \textrm{with} & \wk(y,\omega)=\vd(y,\omega)-v^\delta_1(0,\omega)\end{array} \end{equation} for $v^\delta$ the solution of (\ref{cell}) corresponding to $(p,r)$, and observe that $w^\delta$ satisfies the system \begin{equation}\label{cellnorm}\begin{array}{ll} \delta\wk-\tr(A_k(y,\omega) D^2\wk)+H_k(p+D\wk,\hat{r},\wk-w^\delta_j,y,\omega)=-\delta v^\delta_1(0,\omega) & \textrm{on}\;\;\;\mathbb{R}^n.\end{array} \end{equation}  The following proposition is an immediate consequence of the uniform estimates obtained in Proposition \ref{cellsol}.

\begin{prop}\label{cellnormsol}  Assume (\ref{steady}).  For each $(p,r)\in\mathbb{R}^n\times\mathbb{R}$ and $\omega\in\Omega$, there exists $C=C(p,r,n)>0$ satisfying, for each $R>0$, $$\begin{array}{lll} \max_{1\leq k\leq m}\norm{D\wk}_{L^\infty(\mathbb{R}^n)}\leq C & \textrm{and} & \max_{1\leq k\leq m}\norm{w^\delta_k}_{L^\infty(B_R)}<C(1+R).\end{array}$$ \end{prop}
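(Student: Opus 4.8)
The plan is to exploit the fact that $w^\delta_k$ is obtained from $v^\delta_k$ by subtracting the $\delta$-dependent constant $v^\delta_1(0,\omega)$, so that the two functions have the same spatial gradient and differ only by a constant whose size we can control at the origin. Concretely, since $\wk(\cdot,\omega)=\vd(\cdot,\omega)-v^\delta_1(0,\omega)$, we have $D\wk=D\vd$ pointwise, and therefore the gradient estimate
\[ \max_{1\leq k\leq m}\norm{D\wk}_{L^\infty(\mathbb{R}^n)}\leq C \]
is nothing but the second inequality in (\ref{gradbound}) of Proposition \ref{cellsol}, whose constant $C=C(p,r,n)$ is by construction independent of $\delta$ and $\omega$.

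For the growth bound, I would first estimate $\wk$ at the origin. By definition $w^\delta_1(0,\omega)=0$, and for general $k$ the difference estimate of Proposition \ref{cellsol}, applied on $B_1$, gives
\[ \abs{\wk(0,\omega)}=\abs{\vd(0,\omega)-v^\delta_1(0,\omega)}\leq \max_{1\leq j,k\leq m}\norm{\vd-v^\delta_j}_{L^\infty(B_1)}\leq C \]
with $C=C(p,r,n)$ independent of $\delta$ and $\omega$. Then, for $y\in B_R$, integrating the uniform gradient bound along the segment joining $0$ to $y$ yields
\[ \abs{\wk(y,\omega)}\leq \abs{\wk(0,\omega)}+\norm{D\wk}_{L^\infty(\mathbb{R}^n)}\abs{y}\leq C+CR, \]
so that $\max_{1\leq k\leq m}\norm{w^\delta_k}_{L^\infty(B_R)}\leq C(1+R)$, after enlarging $C$ if necessary. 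Taking the maximum over $k$ and noting that all constants produced were independent of $\delta$ completes the argument.

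There is no real obstacle here: the content is entirely contained in Proposition \ref{cellsol}, and the only point worth emphasizing is that the normalization $\wk=\vd-v^\delta_1(0,\omega)$ converts the bound $\norm{\delta\vd}_{L^\infty}\le C$ — which would only give $\norm{\vd}_{L^\infty}=O(1/\delta)$ — into the far better statement that $\wk$ grows at most linearly, uniformly in $\delta$, because after subtracting a single constant the (uniform) gradient bound takes over. This is precisely the estimate needed later to pass to the limit $\delta\to 0$ and extract a sublinearly growing subsolution of (\ref{cellsub}).
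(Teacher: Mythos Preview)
Your proof is correct and is exactly the argument the paper has in mind: the paper simply states that Proposition~\ref{cellnormsol} ``is an immediate consequence of the uniform estimates obtained in Proposition~\ref{cellsol},'' and you have supplied precisely those details --- the gradient bound is inherited directly since $D\wk=D\vd$, and the linear growth follows from the difference estimate on $B_1$ combined with the Lipschitz bound.
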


We now obtain the desired subsolution of (\ref{cellsub}) by passing to the limit, as $\delta\rightarrow 0$, in (\ref{cellnorm}).  More precisely, we use the convexity of the $H_k$'s and the equivalence of viscosity and distributional solutions for linear inequalities to pass weakly to the limit in (\ref{cellnorm}) and obtain, on a subset of full probability, a subsolution of (\ref{cellsub}) which grows sublinearly at infinity.

The following proposition is used to characterize the behavior at infinity.  Its proof is a consequence of the ergodic theorem and may be found in the appendix of \cite{AS}.

\begin{prop}\label{statsub}  Let $w:\mathbb{R}^n\times\Omega\rightarrow\mathbb{R}$ and $W:\mathbb{R}^n\times\Omega\rightarrow \mathbb{R}^n$ satisfy $W(\cdot,\omega)=Dw(\cdot,\omega)$ almost surely in the sense of distributions.  Assume that $W$ is stationary satisfying $\mathbb{E}W(0,\cdot)=0$ and $W(0,\cdot)\in L^\alpha(\Omega)$ for some $\alpha>n$.  Then, almost surely, \begin{equation}\label{atinfinity} \lim_{\abs{y}\rightarrow\infty}\frac{w(y,\omega)}{\abs{y}}=0.\end{equation} \end{prop}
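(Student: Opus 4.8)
The plan is to run a blow-down (``sublinear corrector'') argument, using Morrey's inequality to convert the $L^\alpha$-control of $W$ furnished by the ergodic theorem into pointwise control of $w$. First I would note that, since $W$ is stationary with $W(0,\cdot)\in L^\alpha(\Omega)$, Fubini and \eqref{statergodic} give $\mathbb{E}\int_{B_R}\abs{W(y,\cdot)}^\alpha\,dy=\abs{B_R}\,\mathbb{E}\abs{W(0,\cdot)}^\alpha<\infty$ for every $R>0$; hence on a set of full probability $W(\cdot,\omega)\in L^\alpha_{\mathrm{loc}}(\mathbb{R}^n)$, and, because $\alpha>n$, a representative of $w(\cdot,\omega)$ lies in $W^{1,\alpha}_{\mathrm{loc}}(\mathbb{R}^n)\subset C^{0,1-n/\alpha}_{\mathrm{loc}}(\mathbb{R}^n)$ by Morrey's inequality. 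I fix $\omega$ in this set and also in the full-probability set on which the conclusions of Proposition \ref{ergodic1} hold for the stationary integrable processes $\abs{W}^\alpha$ and $W$, over a countable family of reference domains.

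Next I introduce the rescalings $\tilde w_\epsilon(x,\omega)=\epsilon\bigl(w(x/\epsilon,\omega)-w(0,\omega)\bigr)$, so that $\tilde w_\epsilon(0,\omega)=0$ and $D\tilde w_\epsilon(\cdot,\omega)=W(\cdot/\epsilon,\omega)$. Proposition \ref{ergodic1}, applied to $\abs{W(\cdot,\omega)}^\alpha$ with $V=B_1$ and $t=R/\epsilon$, yields
$$\int_{B_R}\abs{D\tilde w_\epsilon(x,\omega)}^\alpha\,dx=\epsilon^n\int_{B_{R/\epsilon}}\abs{W(y,\omega)}^\alpha\,dy\longrightarrow \abs{B_R}\,\mathbb{E}\abs{W(0,\cdot)}^\alpha\quad\text{as }\epsilon\to0,$$
and the left-hand side is finite for each fixed $\epsilon$ as well, so $\sup_{0<\epsilon\le1}\norm{D\tilde w_\epsilon(\cdot,\omega)}_{L^\alpha(B_R)}<\infty$ for every $R>0$. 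Together with $\tilde w_\epsilon(0,\omega)=0$ and Morrey's inequality, this bounds $\{\tilde w_\epsilon(\cdot,\omega)\}_{0<\epsilon\le1}$ in $C^{0,1-n/\alpha}(\overline{B}_R)$ for each $R$; hence by Arzel\`a--Ascoli the family is precompact in $C(\overline{B}_R)$ for each $R$.

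It then remains to identify the subsequential limits. If $\tilde w_{\epsilon_j}(\cdot,\omega)\to w_0$ locally uniformly, then $w_0(0)=0$ and $D\tilde w_{\epsilon_j}\to Dw_0$ in the sense of distributions; on the other hand the scaled field $W(\cdot/\epsilon,\omega)$ converges weakly in $L^\alpha_{\mathrm{loc}}$ to $\mathbb{E}W(0,\cdot)=0$, which I would obtain by testing against functions constant on a fixed cube $Q$ --- this reduces, after rescaling, to $\dashint_{Q/\epsilon}W(y,\omega)\,dy\to\mathbb{E}W(0,\cdot)=0$, a consequence of Proposition \ref{ergodic1} for origin-centered cubes via inclusion--exclusion over the corners of $Q$ --- and then passing to general test functions by density using the uniform $L^\alpha(B_R)\hookrightarrow L^1(B_R)$ bound above. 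Thus $Dw_0=0$, so $w_0\equiv w_0(0)=0$; since every subsequence of $\{\tilde w_\epsilon(\cdot,\omega)\}$ has a locally uniformly convergent sub-subsequence with limit $0$, the whole family satisfies $\tilde w_\epsilon(\cdot,\omega)\to0$ locally uniformly, and in particular $\sup_{\abs{x}=1}\abs{\tilde w_\epsilon(x,\omega)}\to0$. Taking $\epsilon=1/\abs{y}$ and $x=y/\abs{y}$ for $\abs{y}\ge1$ then gives $w(y,\omega)/\abs{y}=\tilde w_{1/\abs{y}}(y/\abs{y},\omega)+w(0,\omega)/\abs{y}\to0$ uniformly in the direction $y/\abs{y}$, which is \eqref{atinfinity}. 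I expect the only delicate point to be the translating-cube form of the ergodic theorem behind the weak convergence $W(\cdot/\epsilon,\omega)\rightharpoonup0$; this is routine and, as the statement indicates, is carried out in the appendix of \cite{AS}.
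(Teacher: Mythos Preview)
The paper does not actually prove this proposition: immediately before the statement it says ``Its proof is a consequence of the ergodic theorem and may be found in the appendix of \cite{AS}.'' So there is no in-paper argument to compare against, only a citation.

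Your blow-down argument is correct and is precisely the standard route one expects to find in \cite{AS}: use the ergodic theorem on $\abs{W}^\alpha$ to get a uniform $W^{1,\alpha}_{\mathrm{loc}}$ bound on the rescalings $\tilde w_\epsilon$, upgrade to H\"older compactness via Morrey (this is where $\alpha>n$ enters), and identify every subsequential limit as the zero function using the ergodic theorem on $W$ together with $\mathbb{E}W(0,\cdot)=0$. One small simplification: your inclusion--exclusion step for off-center cubes is unnecessary as stated, since Proposition~\ref{ergodic1} in the paper is already formulated for \emph{every} bounded domain $V$ on a single full-probability set, so $\dashint_{Q/\epsilon}W(y,\omega)\,dy\to\mathbb{E}W(0,\cdot)$ follows directly by taking $V=Q$ and $t=1/\epsilon$.
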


\begin{prop}\label{subsolution}  Assume (\ref{steady}).  There exists a subset $\Omega_1\subset\Omega$ of full probability such that, for every $(p,r)\in\mathbb{R}^n\times\mathbb{R}$, (\ref{cellsub}) admits a subsolution $w=(w_1,\ldots,w_m)$ satisfying, for each $k\in\left\{1,\ldots,m\right\}$ and $C=C(p,r,n)>0$, \begin{equation}\label{subsolutionest}\begin{array}{llll} \lim_{\abs{y}\rightarrow\infty}\frac{w_k(y,\omega)}{\abs{y}}=0, & \norm{Dw_k}_{L^
\infty(\mathbb{R}^n)}\leq C & \textrm{and} & \norm{w_k}_{L^\infty(B_R)}\leq C(1+R).\end{array}\end{equation}\end{prop}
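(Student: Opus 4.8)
The plan is to obtain $w=(w_1,\dots,w_m)$, for each $(p,r)$ and each $\omega$ in a suitable subset of full probability, as a limit along a sequence $\delta\to 0$ of the normalized functions $w^\delta$ from (\ref{auxnorm}), exploiting the uniform estimates of Proposition \ref{cellnormsol} and the convexity (\ref{gradconvex}) of the $H_k$'s. First, I would fix a subset $\Omega_1\subset\Omega$ of full probability on which (\ref{effectiveham1}) holds for every rational $(p,r)$ and on which the (a.s.) conclusion of Proposition \ref{statsub} is available for the stationary fields arising below. Fixing $\omega\in\Omega_1$ and rational $(p,r)$, since $\overline{H}(p,r)=\limsup_{\delta\to 0}-\delta v^\delta_1(0,\omega)$ one may choose a sequence $\delta\to 0$ along which $-\delta v^\delta_1(0,\omega)\to\overline{H}(p,r)$. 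By Proposition \ref{cellnormsol} the $w^\delta_k$ are then equi-Lipschitz on $\mathbb{R}^n$ and equibounded on each ball, so, after a further diagonal extraction, $w^\delta_k\to w_k$ locally uniformly on $\mathbb{R}^n$ and $Dw^\delta_k\rightharpoonup Dw_k$ weakly-$\ast$ in $L^\infty(\mathbb{R}^n;\mathbb{R}^n)$; lower semicontinuity of the $L^\infty$ norms under these limits gives the gradient and local bounds in (\ref{subsolutionest}).

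Next I would pass to the limit in (\ref{cellnorm}). In view of (\ref{coercive}) and the difference bound of Proposition \ref{cellsol}, the arguments of $H_k$ appearing in (\ref{cellnorm}) are bounded on compact sets; hence, by the equivalence of viscosity and distributional solutions for the underlying linear inequalities (Ishii \cite{I}, as in the proof of Proposition \ref{cellsol}), each $w^\delta$ satisfies (\ref{cellnorm}) in the sense of distributions against any nonnegative $\varphi\in C^\infty_c(\mathbb{R}^n)$, the second-order term appearing as $\int\big[(A_k Dw^\delta_k)\cdot D\varphi+((\divg A_k)\cdot Dw^\delta_k)\varphi\big]\,dy$. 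As $\delta\to 0$ this term converges since $Dw^\delta_k\rightharpoonup Dw_k$ while the Lipschitz weights $A_k,\divg A_k$ are fixed; $\delta w^\delta_k\to 0$ locally uniformly; and $-\delta v^\delta_1(0,\omega)\to\overline{H}(p,r)$. For the Hamiltonian term, I would first freeze the strongly convergent arguments $w^\delta_k-w^\delta_j$ and $y$ at their limits, with error controlled by (\ref{hamcon}) and $\norm{w^\delta_k-w_k}_{L^\infty(\supt\varphi)}\to 0$, and then invoke the weak lower semicontinuity of the convex functional $v\mapsto\int H_k(p+Dv,\hat{r},w_k-w_j,y,\omega)\varphi\,dy$, which holds by (\ref{gradconvex}). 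Passing to the $\liminf$ then shows that $w$ is a distributional subsolution of (\ref{cellsub}); one last application of the equivalence result upgrades it to a viscosity subsolution.

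For the sublinear growth, the key point is that $\mathbb{E}Dw^\delta_k(0,\cdot)=0$: since $w^\delta_k$ differs from $v^\delta_k$ by the constant $v^\delta_1(0,\omega)$, one has $Dw^\delta_k=Dv^\delta_k$, and, $\delta v^\delta_k$ and $Dv^\delta_k$ being bounded (Proposition \ref{cellsol}) and stationary, integrating the gradient along segments and using stationarity forces $\mathbb{E}Dv^\delta_k(0,\cdot)=0$. Because $\tau_y$ is measure preserving, weak limits of stationary fields are stationary, so $Dw_k$ is a stationary field with $\mathbb{E}Dw_k(0,\cdot)=0$ and $Dw_k(0,\cdot)\in L^\infty(\Omega)\subset L^\alpha(\Omega)$ for all $\alpha>n$; Proposition \ref{statsub}, applied on $\Omega_1$, then yields $\lim_{\abs{y}\to\infty}w_k(y,\omega)/\abs{y}=0$. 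Finally, to reach all $(p,r)$ with the single set $\Omega_1$, I would approximate $(p,r)$ by rationals $(p_i,r_i)$: the estimates of Propositions \ref{cellsol} and \ref{cellnormsol} are locally uniform in $(p,r)$, so the corresponding subsolutions are equi-Lipschitz and equibounded at the origin, hence converge along a subsequence and locally uniformly to some $w^{(p,r)}$, which is a subsolution of (\ref{cellsub}) by the same convexity argument as above together with the continuity of $\overline{H}$ furnished by Proposition \ref{contdep}, and which inherits (\ref{subsolutionest}); only countably many null sets are discarded.

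I expect the passage to the limit in the equation to be the main obstacle. Because the $A_k$ are only degenerate elliptic there is no interior second-derivative estimate, hence no strong compactness for $Dw^\delta_k$, so the limit must be taken with only $L^\infty$ control of the gradients; convexity of $H_k$ in $p$ is exactly what makes this possible, via weak lower semicontinuity, while the equivalence between distributional and viscosity subsolutions is what lets one argue at the level of the weak formulation and then return to the viscosity notion. A secondary technical point is verifying that the weak limit $Dw_k$ is genuinely a stationary field with vanishing mean, so that Proposition \ref{statsub} applies.
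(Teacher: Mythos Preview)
Your argument for passing to the limit in the equation is essentially sound, but the proof of sublinear growth has a genuine gap. You fix $\omega\in\Omega_1$ first and then choose a subsequence $\delta_j=\delta_j(\omega)$ along which $-\delta_j v^{\delta_j}_1(0,\omega)\to\overline H(p,r)$, followed by a further $\omega$-dependent diagonal extraction via Arzel\`a--Ascoli. The resulting limit $w_k(\cdot,\omega)$ is therefore defined, for each $\omega$ separately, through an $\omega$-dependent subsequence. There is then no reason for $(y,\omega)\mapsto Dw_k(y,\omega)$ to be stationary (or even jointly measurable): stationarity of $Dv^\delta_k$ gives $Dv^\delta_k(y+z,\omega)=Dv^\delta_k(y,\tau_z\omega)$ for each fixed $\delta$, but this identity does not survive the limit if the subsequences used at $\omega$ and at $\tau_z\omega$ differ. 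Your sentence ``weak limits of stationary fields are stationary'' is true only when the limit is taken along a subsequence independent of $\omega$, which is precisely what your construction does not do. Without stationarity of $Dw_k$ you cannot invoke Proposition \ref{statsub}, and the sublinear growth in (\ref{subsolutionest}) is unproved.

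The paper avoids this by taking the limit simultaneously in $y$ and $\omega$: one extracts a single (non-random) subsequence along which $-\delta v^\delta_1(0,\cdot)\rightharpoonup\tilde H(p,r)$ weak-$*$ in $L^\infty(\Omega)$ and $(w^\delta,Dw^\delta)\rightharpoonup(w,W)$ weak-$*$ in $L^\infty(B_R\times\Omega)^m\times L^\infty(B_R\times\Omega;\mathbb{R}^n)^m$. Because the subsequence is fixed, stationarity of $Dv^\delta_k$ transfers to $W_k=Dw_k$, and $\mathbb{E}Dw_k(0,\cdot)=0$ follows. The price is that the right-hand side obtained is only $\tilde H(p,r)\leq\overline H(p,r)$ (a weak-$*$ limit, not the pointwise $\limsup$), but this is harmless for a subsolution of (\ref{cellsub}); and since one now has only weak convergence of $w^\delta$ (not locally uniform), the passage to the limit in the nonlinear term is carried out via Mazur's lemma and strong $L^p$ convergence of convex combinations, which is why both (\ref{gradconvex}) and (\ref{difconvex}) are invoked.
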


\begin{proof}  The proof is nearly identical to the analogous arguments in \cite{AS1,AS,LS2}.  We therefore only sketch each step.  To simplify notation, we write $w^\delta=(w^\delta_1,\ldots,w^\delta_m)\in L^\infty_{\textrm{loc}}(\mathbb{R}^n)^m$ and $Dw^\delta=(Dw^\delta_1,\ldots,Dw^\delta_m)\in L^\infty(\mathbb{R}^n;\mathbb{R}^{n})^m$.

For each $\delta>0$, $w^\delta$ is a distributional solution of (\ref{cellnorm}) (See \cite{I}).  We now pass weakly to the limit, as $\delta\rightarrow 0$, to obtain a subsolution of (\ref{cellsub}).

Using Proposition \ref{cellnormsol} and (\ref{transgroup}), there exists a deterministic $\tilde{H}(p,r)\in L^\infty(\Omega)$ and, for each $R>0$,  there exist $w=(w_1,\ldots,w_m)\in L^\infty(B_R\times\Omega)^m$ and $W=(W_1,\ldots,W_m)\in L^\infty(B_R\times\Omega;\mathbb{R}^n)^m$ such that, after passing to a subsequence, \begin{equation}\label{auxconvergence1}\begin{array}{lll} -\delta v^\delta_1(0,\omega)\rightharpoonup\tilde{H}(p,r) & \textrm{in} & L^\infty(\Omega)\;\;\textrm{weak-*}\end{array}\end{equation} and \begin{equation}\label{auxconvergence}\begin{array}{lll} (w^\delta, Dw^\delta) \rightharpoonup (w, W) & \textrm{in} & L^\infty(B_R\times\Omega)^m\times L^\infty(B_R\times\Omega;\mathbb{R}^n)^m\;\;\textrm{weak-*}\end{array}\end{equation}where, on a subset of full probability and for each $k\in\left\{1,\ldots,m\right\}$, $W_k=Dw_k$ in the sense of distributions (See \cite{AS}).  Moreover, (\ref{auxconvergence}) implies that, for each $R>0$ and any $1\leq p<\infty$, there exists a sequence of convex combinations of the $(w^\delta,Dw^\delta)$, depending on $p$, which converges strongly to $(w,Dw)$ in $L^p(B_R\times\Omega)^m\times L^p(B_R\times\Omega;\mathbb{R}^n)^m$.

The convexity (\ref{gradconvex}) and (\ref{difconvex}) and Proposition \ref{cellnormsol}, (\ref{auxconvergence1}) and (\ref{auxconvergence}) imply using the Dominated Convergence Theorem that, on a subset of full probability, $w=(w_1,\ldots,w_m)$ is a distributional solution of the system \begin{equation}\label{subsolution5} -\tr(A_k(y,\omega)D^2w_k)+H_k(p+Dw_k,\hat{r},w_k-w_j,y,\omega)\leq \tilde{H}(p,r)\leq \overline{H}(p,r). \end{equation}  The convexity (\ref{gradconvex}) and (\ref{difconvex}) and the equivalence of distributional and viscosity solutions for linear inequalities imply that $w$ is a viscosity solution of (\ref{cellsub}) (See \cite{I}).

The estimates (\ref{subsolutionest}) are an immediate consequence of (\ref{auxconvergence}) and Proposition \ref{cellsol}.  Furthermore, since $v^\delta$ is stationary we have, for each $\delta>0$ and $k\in\left\{1,\ldots,m\right\}$, $$\begin{array}{lll} \mathbb{E}(D v^\delta_k(0,\cdot))=0 & \textrm{and, therefore,} & \mathbb{E}(Dw_k(0,\cdot))=0.\end{array}$$  We conclude using Proposition \ref{statsub} that there exists a subset of full probability on which, for each $k\in\left\{1,\ldots,m\right\}$, $\lim_{\abs{y}\rightarrow\infty}\abs{y}^{-1}w_k(y,\omega)=0$.

Therefore, there exists a subset $\Omega_1(p,r)\subset\Omega$ of full probability such that $w$ is a subsolution of (\ref{cell}) corresponding to $(p,r)$ satisfying (\ref{subsolutionest}).  To conclude, define $\Omega_1=\bigcap_{(p,r)\in\mathbb{Q}^n\times\mathbb{Q}}\Omega_1(p,r)$ and apply Proposition \ref{contdep}. \end{proof}

We conclude this section by strengthening our characterization of $\overline{H}(p,r)$.  We will use crucially that, on a subset of full probability, the constructed subsolution $w$ of (\ref{cellsub}) grows sub-linearly at infinity.

\begin{prop}\label{auxenhance}  Assume (\ref{steady}).  For each $(p,r)\in\mathbb{R}^n\times\mathbb{R}$ and $R>0$, \begin{equation}\label{charac1} \lim_{\delta \rightarrow 0}\sup_{y\in B_{R/\delta}}\mathbb{E}(\abs{\overline{H}(p,r)+\delta v^{\delta}_1(y,\omega)})=0.\end{equation} \end{prop}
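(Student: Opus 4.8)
The plan is to upgrade the pointwise convergence $-\delta v^\delta_1(0,\omega) \to \overline{H}(p,r)$ along a subsequence to a quantitative, uniform-in-$\,y\in B_{R/\delta}$ convergence of the expectations, using the sublinear subsolution $w$ produced in Proposition \ref{subsolution} together with the comparison principle for \eqref{cell}. First I would record the two-sided bracketing that makes everything go: by \eqref{effectiveham1} one already has $\limsup_{\delta\to 0}(-\delta v^\delta_1(0,\omega)) = \overline{H}(p,r)$, so it suffices to prove the matching $\liminf$ bound and to transport it from the single point $y=0$ to all $y\in B_{R/\delta}$. Stationarity of $v^\delta$ (Proposition \ref{cellsol}) gives $\mathbb{E}(\delta v^\delta_1(y,\cdot)) = \mathbb{E}(\delta v^\delta_1(0,\cdot))$ for every $y$, so the sup over $y\in B_{R/\delta}$ in \eqref{charac1} is, after moving the expectation inside, controlled by the oscillation of $\delta v^\delta_1(\cdot,\omega)$ on $B_{R/\delta}$; but the gradient bound $\norm{Dv^\delta_k}_{L^\infty}\le C$ only gives $\abs{\delta v^\delta_1(y,\omega)-\delta v^\delta_1(0,\omega)}\le C\delta\abs{y}\le CR$, which is not small. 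So the naive argument fails and one genuinely needs the subsolution.

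The substantive step is a comparison argument. Fix $(p,r)$ and work on $\Omega_1$. The subsolution $w$ from Proposition \ref{subsolution} solves, in the viscosity sense, $-\tr(A_kD^2w_k)+H_k(p+Dw_k,\hat r,w_k-w_j,y,\omega)\le \overline{H}(p,r)$ with $w_k(y,\omega)/\abs{y}\to 0$ as $\abs{y}\to\infty$. For $\eta>0$ consider $w^\eta_k := w_k - \eta^{-1}\overline{H}(p,r)\cdot\eta =$ (more precisely) the function $w_k + c$ shifted so that $\delta(w_k+c)$ becomes a supersolution of \eqref{cell} with right-hand side $-\delta v^\delta_1(0,\omega)$; one checks that since $\delta w_k$ is bounded on compacts and $\delta w_k \to 0$ with $\delta$, the term $\delta w_k$ can be absorbed into an error of size $o_\delta(1)$ on $B_{R/\delta}$ uniformly, using $\norm{w_k}_{L^\infty(B_{R/\delta})}\le C(1+R/\delta)$, so $\delta\norm{w_k}_{L^\infty(B_{R/\delta})}\le C\delta + CR$ — which again is only $O(R)$, not $o(1)$. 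The correct route, following \cite{AS1,AS}, is instead: rescale. Set $w^\delta_k$ on the macroscopic scale, i.e. compare $v^\delta$ with the rescaled sublinear subsolution $\delta\, w_k(y/\delta,\omega)$-type object on the fixed ball $B_R$; sublinearity of $w$ is exactly what forces $\delta w_k(y/\delta,\omega)\to 0$ locally uniformly on $B_R$. Then comparison on $B_R$ between $v^\delta(\cdot/\delta,\omega)$-rescaled and this subsolution yields $-\delta v^\delta_1(y,\omega)\ge \overline{H}(p,r) - o_\delta(1)$ uniformly for $y\in B_{R/\delta}$, on a set of full probability depending on $(p,r)$; combined with \eqref{effectiveham1} and Proposition \ref{cellsol}'s gradient bound (to replace $v^\delta_1$ at one point by $v^\delta_k$ at a nearby point) this gives a pointwise-a.s. version of \eqref{charac1} without the expectation.

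Finally I would pass from the almost-sure statement to the expectation statement \eqref{charac1}. The quantities $\delta v^\delta_1(y,\omega)$ are uniformly bounded by $C=C(p,r,n)$ from \eqref{gradbound}, so $\overline{H}(p,r)+\delta v^\delta_1(y,\omega)$ is uniformly bounded; the almost-sure bound $\sup_{y\in B_{R/\delta}}(\,\overline{H}(p,r)+\delta v^\delta_1(y,\omega))\le o_\delta(1)$ together with the reverse inequality (from the limsup characterization and stationarity, or from a matching supersolution comparison) gives $\sup_{y\in B_{R/\delta}}\abs{\overline{H}(p,r)+\delta v^\delta_1(y,\omega)}\to 0$ a.s., and then Dominated Convergence promotes this to convergence of expectations. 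One subtlety: the null set in Proposition \ref{subsolution} is handled by intersecting over rational $(p,r)$ and invoking the continuous dependence Proposition \ref{contdep}, exactly as at the end of that proof; but \eqref{charac1} as stated only claims convergence of expectations, which is automatically deterministic, so in fact one does not even need to track the null set carefully here.

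I expect the main obstacle to be the comparison step on the expanding ball $B_{R/\delta}$: one must verify that the sublinear growth of $w$ translates, after the parabolic/elliptic rescaling $y\mapsto \delta y$, into an error that is genuinely $o(1)$ and uniform over the whole ball $B_{R/\delta}$ and not merely $O(R)$ — this is precisely where sublinearity (as opposed to mere Lipschitz bounds) is indispensable, and it requires care in setting up the right comparison function and localizing the equation near $\partial B_{R/\delta}$, where the subsolution's sublinearity must dominate the boundary contribution.
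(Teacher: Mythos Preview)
Your proposal has a genuine gap, and in fact reverses the logic of the argument in a way that cannot be repaired with the tools available at this point in the paper.

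First, the direction of the comparison is wrong. Comparing $v^\delta$ with a \emph{sub}solution built from $w$ (shifted by $-\delta^{-1}(\overline{H}(p,r)+\eta)$ and penalized at infinity) gives $z^\delta\le v^\delta$, hence an \emph{upper} bound $-\delta v^\delta_1(y,\omega)\le \overline{H}(p,r)+\eta+o_\delta(1)$, not the lower bound you claim. There is no ``matching supersolution comparison'' available here: the construction of $w$ in Proposition~\ref{subsolution} relies on convexity and weak limits, and produces only a subsolution.

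Second, and more seriously, your plan is to prove the almost-sure two-sided uniform convergence $\sup_{y\in B_{R/\delta}}|\overline{H}(p,r)+\delta v^\delta_1(y,\omega)|\to 0$ and then integrate. But the lower bound $-\delta v^\delta_1(y,\omega)\ge \overline{H}(p,r)-o_\delta(1)$ almost surely is precisely the hard statement that is \emph{not} available here; it is the content of Proposition~\ref{colupgrade}, which requires the full metric-problem machinery of Sections~6--8. The characterization \eqref{effectiveham1} only gives $\limsup_{\delta\to 0}(-\delta v^\delta_1(0,\omega))=\overline{H}(p,r)$, not the $\liminf$. So your route would be circular.

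The idea you are missing is the following. In the proof of Proposition~\ref{subsolution} one obtains, along the chosen subsequence, the weak-$*$ limit $-\delta v^\delta_1(0,\cdot)\rightharpoonup \tilde H(p,r)$ in $L^\infty(\Omega)$. The comparison argument (in the correct direction) shows that for every $\eta>0$ and every $\omega\in\Omega_1$, $-\delta v^\delta_1(0,\omega)\le \tilde H(p,r)+\eta$ for all small $\delta$; hence $\overline{H}(p,r)\le\tilde H(p,r)$, and since $\tilde H(p,r)\le \overline{H}(p,r)$ is automatic from weak-$*$ convergence, one gets $\tilde H(p,r)=\overline{H}(p,r)$. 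Now a standard measure-theoretic lemma (uniformly bounded $f_\delta$ with $\limsup f_\delta=c$ a.s.\ and $f_\delta\rightharpoonup c$ weak-$*$ forces $f_\delta\to c$ in $L^1$) yields $\mathbb{E}|\overline{H}(p,r)+\delta v^\delta_1(0,\omega)|\to 0$. The extension to $\sup_{y\in B_{R/\delta}}\mathbb{E}(\cdots)$ is then handled by stationarity of $v^\delta$ together with a finite Vitali cover of $B_R$ and the Lipschitz bound from Proposition~\ref{cellsol}. The point is that the proposition asks only for convergence of expectations, and the weak-$*$ information supplies exactly the missing half of the two-sided bound \emph{in the mean}, without any almost-sure lower bound.
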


\begin{proof}  We first show, using the notation of Proposition \ref{subsolution}, that $\tilde{H}(p,r)=\overline{H}(p,r)$.  Fix $(p,r)\in\mathbb{R}^n\times\mathbb{R}$ and let $v^\delta$ denote the solution of (\ref{cell}) corresponding to $(p,r)$.  In view of (\ref{auxconvergence1}), \begin{equation}\label{enhance1} \tilde{H}(p,r)\leq \overline{H}(p,r).\end{equation}  The opposite inequality is obtained by a comparison argument.

Define $$\varphi(y)=-(1+\abs{y}^2)^{1/2}+1$$  and introduce, for each $\omega\in\Omega$, $\delta>0$ and $\epsilon>0$, $$\begin{array}{lll} z^\delta=(z^\delta_1,\ldots,z^\delta_m) & \textrm{with} &  z^\delta_k=w_k-\delta^{-1}(\tilde{H}(p,r)+\eta)+\epsilon\varphi. \end{array}$$

In view of (\ref{gradconvex}), (\ref{difconvex}), (\ref{matsquare}), (\ref{lipsigma}), (\ref{hamcon}) and (\ref{subsolution5}), there exists $C>0$ such that, for every $\omega\in\Omega_1$, the function $z^\delta$ is a solution of the inequality \begin{equation}\label{effectcomp} \delta z^
\delta_k-\tr(A_k(y,\omega)D^2 z^\delta_k)+H_k(p+D z^\delta_k,\hat{r},z^\delta_k-z^\delta_j,y,\omega)\leq \delta z^\delta_k+\tilde{H}(p,r)+C\epsilon.\end{equation}

For each $\omega\in\Omega_1$ and $k\in\left\{1,\ldots,m\right\}$, $$\begin{array}{lll} \lim_{|y|\rightarrow\infty}\frac{\vd(y,\omega)}{\abs{y}}=0 & \textrm{and} & \lim_{|y|\rightarrow\infty}\frac{ z^\delta_k(y,\omega)}{\abs{y}}=-1  \end{array}$$ and, hence, for all $\delta>0$ and $\omega\in\Omega_1$, there exists $R(\delta,\omega)>0$ such that $z^\delta\leq v^\delta$ on $\partial B_{R(\delta,\omega)}$.  Furthermore, for each $\omega\in\Omega_1$ and $\epsilon>0$, there exists $C_\epsilon=C_\epsilon(\omega) >0$ such that $$\begin{array}{ll} \max_{1\leq k\leq m} w_k(y,\omega)\leq \epsilon\abs{y}+C_\epsilon & \textrm{on}\;\;\;\mathbb{R}^n.\end{array}$$

The righthand side of (\ref{effectcomp}) is therefore bounded by $$\delta z^\delta_k-\tr(A_k(y,\omega)D^2 z^\delta_k)+H_k(p+D z^\delta_k,\hat{r},z^\delta_k-z^\delta_j,y,\omega)\leq \delta C_\epsilon+C\epsilon-\eta.$$  We choose $\epsilon=\frac{\eta}{4C}$ to conclude that $z^\delta$ is a global subsolution of (\ref{cell}) for each $0<\delta<\frac{\eta}{4 C_\epsilon}$ and $\omega\in\Omega_1$.

For all $\omega\in\Omega_1$ and for all $\delta<\frac{\eta}{4 C_\epsilon}$, the comparison principle yields $z^\delta\leq v^\delta$ on $B_{R(\delta,\omega)}$.  In particular, for each $\eta>0$ and $\omega\in\Omega_1$, \begin{equation}\label{enhanced2} -\delta z^\delta_1(0,\omega)=\tilde{H}(p,r)+\eta\geq -\delta v^\delta_1(0,\omega).\end{equation}  In view of (\ref{enhance1}) and (\ref{enhanced2}), on a subset of full probability, \begin{equation}\label{enhance3} \tilde{H}(p,r)=\overline{H}(p,r)=\limsup_{\delta\rightarrow 0} -\delta v^\delta_1(0,\omega).\end{equation}

We now prove (\ref{charac1}).  Fix $(p,r)\in\mathbb{R}^n\times\mathbb{R}$.  In view of (\ref{enhance3}), a basic measure-theoretic lemma implies (See \cite{AS,LS2}), \begin{equation}\label{enhance4} \lim_{\delta\rightarrow 0}\mathbb{E}(\abs{\overline{H}(p,r)+\delta v^\delta_1(0,\omega)})=0. \end{equation}

It remains to prove that, for each $R>0$, \begin{equation*}\label{enhance5} \lim_{\delta\rightarrow 0}\sup_{y\in B_{R/\delta}}\mathbb{E}(\abs{\overline{H}(p,r)+\delta v^\delta_1(y,\omega)})=0.\end{equation*}  Fix $R>0$ and $\epsilon>0$.  Using the Vitali covering lemma, choose balls $\left\{B(y_1,\epsilon),\ldots,B(y_k,\epsilon)\right\}$ satisfying $k\leq C(\frac{R}{\epsilon})^n$ and $B_R\subset\cup_{i=1}^kB(y_i,\epsilon)$.  In view of Propositions \ref{cellsol} and \ref{subsolution} there exists $C>0$ satisfying, for each $\delta>0$, $$\begin{array}{lll} \sup_{y\in B_{R/\delta}}\abs{\overline{H}(p,r)+\delta v^\delta_1(y,\omega)} & \leq & \max_{1\leq i\leq k}(\abs{\overline{H}(p,r)+\delta v^\delta_1(y_i/\delta,\omega)})+C\epsilon \\ & = & \max_{1\leq i\leq k}(\abs{\overline{H}(p,r)+\delta v^\delta_1(0,\omega)+\delta w^\delta_1(y_i/\delta,\omega)})+C\epsilon.\end{array}$$  Since $\mathbb{P}(\Omega_1)=1$, it follows from (\ref{enhance4}) and Proposition \ref{subsolution} that $$\limsup_{\delta\rightarrow 0}\sup_{y\in B_{R/\delta}}\mathbb{E}(\abs{\overline{H}(p,r)+\delta v^\delta_1(y,\omega)})\leq \limsup_{\delta\rightarrow 0}\mathbb{E}(\abs{\overline{H}(p,r)+\delta v^\delta_1(0,\omega)})+C\epsilon=C\epsilon.$$  Because $\epsilon>0$ was arbitrary, this completes the proof.\end{proof}

\section{Properties of $\overline{H}(p,r)$}

We identify the properties of the effective Hamiltonian $\overline{H}(p,r)$, which yield the well-posedness of the scalar equation \begin{equation}\label{eff}\left\{\begin{array}{ll} u_t+\overline{H}(Du,u)=0 & \textrm{on}\;\;\;\mathbb{R}^n\times (0,\infty), \\ u=u_0 & \textrm{on}\;\;\;\mathbb{R}^n\times\left\{0\right\}. \end{array}\right.\end{equation}

The continuity properties of $\overline{H}(p,r)$ are inherited from (\ref{hamcon}).  We remark that, because the estimates contained in Proposition \ref{cellsol} depend on $(p,r)$, only local continuity estimates are obtained for $\overline{H}$.  Furthermore, the effective Hamiltonian inherits the minimal coercivity occurring for the $H_k$'s in (\ref{coercive}).  And, $\overline{H}(p,r)$ is monotone in the variable $r$ and convex in the variable $p$.  The monotonicity is inherited from (\ref{colmonstrict}) and the convexity from (\ref{gradconvex}).

\begin{prop}  Assume (\ref{steady}).  For each $R>0$ there exists $C=C(R)>0$ such that, for all $(p_i,r_i)\in B_R\times B_R$, \begin{equation}\label{effcon}\abs{\overline{H}(p_1,r_1)-\overline{H}(p_2,r_2)}\leq C\max_{1\leq k\leq m}((1+\abs{p_1}+\abs{p_2})^{\gamma_k-1}\abs{p_1-p_2}+\abs{r_1-r_2}).\end{equation}

For each $R>0$ and for the same constants occurring in (\ref{coercive}) we have, for all $(p,r)\in\mathbb{R}^n\times B_R$, \begin{equation}\label{effcoercive}\min_{1\leq k\leq m} \left(C_1\abs{p}^{\gamma_k}-C_3\right)\leq \overline{H}(p,r).\end{equation}

If $r_1<r_2$ then, for each $p\in\mathbb{R}^n$, \begin{equation}\label{effmonotone}\overline{H}(p,r_1)\leq \overline{H}(p,r_2).\end{equation}

For each $r\in\mathbb{R}$, \begin{equation}\label{effconvex} p\rightarrow\overline{H}(p,r)\;\;\textrm{is convex.}\end{equation}\end{prop}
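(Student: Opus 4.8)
The plan is to treat the four assertions separately, all of them based on the characterization $\overline{H}(p,r)=\limsup_{\delta\to0}(-\delta v^\delta_1(0,\omega))$ from (\ref{effectiveham1}), on the comparison principle for (\ref{cell}) for bounded sub- and supersolutions (see \cite{IK}), and --- for the coercivity only --- on the sublinearly growing subsolution $w=(w_1,\ldots,w_m)$ of (\ref{cellsub}) from Proposition \ref{subsolution}; everything is done on the full-probability set where (\ref{effectiveham1}) and Proposition \ref{subsolution} hold. For the local continuity (\ref{effcon}) I would run the comparison that also proves Proposition \ref{contdep}: given $(p_i,r_i)\in B_R\times B_R$ with corresponding solutions $v^{i,\delta}$ of (\ref{cell}), I set $\kappa=C\max_{1\leq k\leq m}\big((1+\abs{p_1}+\abs{p_2})^{\gamma_k-1}\abs{p_1-p_2}+\abs{r_1-r_2}\big)$ and note that, because adding a constant to all components leaves gradients, Hessians and the differences $v^{2,\delta}_k-v^{2,\delta}_j$ unchanged, substituting $v^{2,\delta}_k+\delta^{-1}\kappa$ into the $k$-th equation of (\ref{cell}) for $(p_1,r_1)$ produces $\kappa+\big(H_k(p_1+Dv^{2,\delta}_k,\hat{r}_1,\cdot,y,\omega)-H_k(p_2+Dv^{2,\delta}_k,\hat{r}_2,\cdot,y,\omega)\big)$, which by the gradient bound of Proposition \ref{cellsol} and (\ref{hamcon}) is $\geq\kappa/2>0$ once $C=C(R,n)$ is large. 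Comparison then gives $v^{1,\delta}_k\leq v^{2,\delta}_k+\delta^{-1}\kappa$, hence $-\delta v^{1,\delta}_1(0,\omega)\geq-\delta v^{2,\delta}_1(0,\omega)-\kappa$; exchanging the two pairs and passing to the limsup in $\delta$ gives $\abs{\overline{H}(p_1,r_1)-\overline{H}(p_2,r_2)}\leq\kappa$.

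For the coercivity (\ref{effcoercive}), I fix $(p,r)$ with $\abs{r}\leq R$ and any $k$. Since $w_k(\cdot,\omega)$ is Lipschitz with $\abs{y}^{-1}w_k(y,\omega)\to0$, for every $\epsilon>0$ the function $y\mapsto w_k(y,\omega)-\epsilon(1+\abs{y}^2)^{1/2}$ attains a global maximum at some $y_\epsilon$; testing the $k$-th subsolution inequality of (\ref{cellsub}) with the corresponding smooth test function, whose gradient and Hessian at $y_\epsilon$ are $O(\epsilon)$, and using $\norm{A_k}_{L^\infty(\mathbb{R}^n)}\leq C$ from (\ref{matsquare})--(\ref{lipsigma}), I get $H_k\big(p+O(\epsilon),\hat{r},w_k(y_\epsilon,\omega)-w_j(y_\epsilon,\omega),y_\epsilon,\omega\big)\leq\overline{H}(p,r)+C\epsilon$; discarding the nonnegative term $C_2\max_{i\neq k}(w_k-w_i)_+$ in (\ref{coercive}) leaves $C_1\abs{p+O(\epsilon)}^{\gamma_k}-C_3\leq\overline{H}(p,r)+C\epsilon$, and $\epsilon\to0$ yields $C_1\abs{p}^{\gamma_k}-C_3\leq\overline{H}(p,r)$ for each $k$, which is even stronger than (\ref{effcoercive}). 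The monotonicity (\ref{effmonotone}) is immediate: for $r_1\leq r_2$ the vectors $\hat{r}_1,\hat{r}_2$ satisfy $(\hat{r}_2)_k-(\hat{r}_1)_k=r_2-r_1=\max_i\abs{(\hat{r}_2)_i-(\hat{r}_1)_i}$ for every $k$, so (\ref{colmonstrict}) gives $H_k(p,\hat{r}_2,s,y,\omega)\geq H_k(p,\hat{r}_1,s,y,\omega)$; hence the solution $v^{2,\delta}$ of (\ref{cell}) for $(p,r_2)$ is a subsolution of (\ref{cell}) for $(p,r_1)$, comparison gives $v^{2,\delta}\leq v^{1,\delta}$, and the limsup yields $\overline{H}(p,r_1)\leq\overline{H}(p,r_2)$.

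For the convexity (\ref{effconvex}), I fix $r\in\mathbb{R}$, $p_1,p_2\in\mathbb{R}^n$ and $\lambda\in[0,1]$, put $p=\lambda p_1+(1-\lambda)p_2$, and let $v^{1,\delta},v^{2,\delta},\bar v^\delta$ be the solutions of (\ref{cell}) for $(p_1,r),(p_2,r),(p,r)$. The convex combination $\lambda v^{1,\delta}+(1-\lambda)v^{2,\delta}$ is bounded, and --- using the linearity of $M\mapsto\tr(A_k(y,\omega)M)$, the convexity of $H_k$ in the gradient and coupling variables (\ref{gradconvex})--(\ref{difconvex}) (as in the weak-limit passage in the proof of Proposition \ref{subsolution}), and the fact that $p+D\big(\lambda v^{1,\delta}_k+(1-\lambda)v^{2,\delta}_k\big)$ and $\big(\lambda v^{1,\delta}_k+(1-\lambda)v^{2,\delta}_k\big)-\big(\lambda v^{1,\delta}_j+(1-\lambda)v^{2,\delta}_j\big)$ are the corresponding convex combinations of $p_i+Dv^{i,\delta}_k$ and $v^{i,\delta}_k-v^{i,\delta}_j$ --- it is a distributional, hence (by \cite{I}) a viscosity, subsolution of (\ref{cell}) for $(p,r)$. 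Comparison with $\bar v^\delta$ then gives $-\delta\bar v^\delta_1(0,\omega)\leq\lambda\big(-\delta v^{1,\delta}_1(0,\omega)\big)+(1-\lambda)\big(-\delta v^{2,\delta}_1(0,\omega)\big)$, and taking $\limsup_{\delta\to0}$ produces $\overline{H}(p,r)\leq\lambda\overline{H}(p_1,r)+(1-\lambda)\overline{H}(p_2,r)$.

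The continuity, coercivity and monotonicity are comparatively routine --- two direct comparisons and a single-point maximum argument. I expect (\ref{effconvex}) to be the main obstacle: a convex combination of viscosity subsolutions of a nonlinear system need not be a viscosity subsolution, so the argument must be run in the distributional formulation, where it leans on the convexity of the $H_k$ in both the gradient and the coupling variables --- precisely what (\ref{gradconvex})--(\ref{difconvex}) and the weak-limit argument of Proposition \ref{subsolution} are designed to supply --- and on the equivalence of distributional and viscosity solutions for linear inequalities. One should also keep track of the fact that the estimates are local, since those of Proposition \ref{cellsol} depend on $(p,r)$; this is why (\ref{effcon}) carries an $R$-dependent constant.
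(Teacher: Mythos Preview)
Your argument is correct and, for (\ref{effcon}), (\ref{effmonotone}) and (\ref{effconvex}), it is exactly the comparison the paper has in mind --- the paper writes out only (\ref{effcon}) and declares the remaining three ``straightforward comparison arguments.'' The one genuine divergence is (\ref{effcoercive}): the comparison the paper intends is simply that, by (\ref{coercive}), the constant vector $\hat c$ with $c=-\delta^{-1}\min_{1\leq k\leq m}(C_1\abs{p}^{\gamma_k}-C_3)$ is a supersolution of (\ref{cell}), so $v^\delta\leq\hat c$ and hence $-\delta v^\delta_1(0,\omega)\geq\min_k(C_1\abs{p}^{\gamma_k}-C_3)$, with no need for Proposition \ref{subsolution}. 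Your route --- testing the sublinear subsolution $w_k$ at an approximate global maximum --- is longer but buys you the per-$k$ inequality $C_1\abs{p}^{\gamma_k}-C_3\leq\overline H(p,r)$ for \emph{every} $k$, as you note. One caution on (\ref{effconvex}): the step ``the convex combination of the $v^{i,\delta}$ is a subsolution'' uses joint convexity of $H_k$ in $(p,s)$, while (\ref{gradconvex}) and (\ref{difconvex}) as written assert only separate convexity; the paper makes the identical move throughout (e.g.\ in the proofs of Propositions \ref{subsolution} and \ref{colmetriccomp}), so this is a convention of the paper rather than a defect particular to your argument.
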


\begin{proof}  Each property is obtained using a straightforward comparison argument.  Therefore, we prove only (\ref{effcon}).  Fix $R>0$.  Let $v^{i,\delta}$ be the solution of (\ref{cell}) corresponding to $(p_i,r_i)\in B_R\times B_R$.  In view of (\ref{hamcon}) and Proposition \ref{cellsol}, there exists $C=C(R)>0$ such that, for each $\omega\in\Omega$, \begin{equation*}\label{effcon2}\begin{array}{lll} z^\delta=(z^\delta_1,\ldots,z^\delta_m) & \textrm{with} & z^\delta_k=v^{1,\delta}_k-\delta^{-1}C\max_{1\leq i\leq m}((1+\abs{p_1}+\abs{p_2})^{\gamma_i-1}\abs{p_1-p_2}+\abs{r_1-r_2})\end{array}\end{equation*} is a subsolution of (\ref{cell}) corresponding to $(p_2,r_2)$ and, by the comparison principle, \begin{equation*}\label{effcon3}\begin{array}{ll} \max_{1\leq k\leq m}(\delta v^{1,\delta}_k-\delta v^{2,\delta}_k)\leq C\max_{1\leq i\leq m}((1+\abs{p_1}+\abs{p_2})^{\gamma_i-1}\abs{p_1-p_2}+\abs{r_1-r_2}) & \textrm{on}\;\;\;\mathbb{R}^n.\end{array} \end{equation*}  The opposite inequality is obtained by reversing the roles of $v^{1,\delta}$ and $v^{2,\delta}$.

We conclude using Proposition \ref{auxenhance}.  \end{proof}

The well-posedness of (\ref{eff}) is now an immediate consequence of (\ref{effcon}), (\ref{effcoercive}) and (\ref{effmonotone}) (See Crandall, Ishii and Lions \cite{CIL}).

\begin{prop}\label{effsol}  Assume (\ref{steady}).  For each $T>0$, $(\ref{eff})$ admits a unique solution $u\in\BUC(\mathbb{R}^n\times[0,T))$ and, if $u_0\in C^{0,1}(\mathbb{R}^n)$, then there exists $C>0$ satisfying $\norm{u_t}_{L^\infty(\mathbb{R}^n\times[0,\infty))}\leq C$ and $\norm{Du}_{L^\infty(\mathbb{R}^n\times[0,\infty))} \leq C$. \end{prop}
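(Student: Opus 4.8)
The plan is to invoke the standard well-posedness theory for scalar, convex, coercive Hamilton-Jacobi equations with a zeroth-order term that is monotone in $u$, as developed in \cite{CIL}. First I would record that (\ref{effcon}) gives the local Lipschitz continuity of $\overline{H}$ in $(p,r)$ with the correct $(1+|p_1|+|p_2|)^{\gamma_k-1}$ growth in the gradient slot, (\ref{effcoercive}) gives the coercivity $\min_k(C_1|p|^{\gamma_k}-C_3)\le \overline{H}(p,r)$, and (\ref{effmonotone}) gives monotonicity (hence "properness" in the terminology of \cite{CIL}) in $r$; together with (\ref{initialbuc}) these are precisely the structural hypotheses under which comparison and existence hold for (\ref{eff}). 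Comparison for bounded sub- and supersolutions follows from the monotonicity in $u$ and the Lipschitz continuity of $\overline{H}$ in $p$ (the coercivity is not even needed for comparison, only for the Lipschitz bounds), and existence in $\BUC(\mathbb{R}^n\times[0,T))$ then follows from Perron's method using $\pm(C t + \Lip(u_0)$-type barriers built from the boundedness of $\overline{H}$ on bounded sets, which is contained in (\ref{effcon}) applied with $p_2=0,r_2=0$. Uniqueness is immediate from comparison.

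For the Lipschitz estimates when $u_0\in C^{0,1}(\mathbb{R}^n)$, I would argue by comparison exploiting the translation invariance of the effective equation. For the spatial gradient bound: for $h\in\mathbb{R}^n$ the function $(x,t)\mapsto u(x+h,t)+\Lip(u_0)|h|$ solves the same equation (since $\overline{H}$ does not depend on $x$) and dominates $u_0$ at $t=0$; comparison gives $u(x+h,t)-u(x,t)\le \Lip(u_0)|h|$, and hence $\norm{Du}_{L^\infty}\le \Lip(u_0)$. For the time derivative bound: since $u_0$ is Lipschitz, $|\overline{H}(Du_0,u_0)|$ is bounded, say by $K$, using (\ref{effcon}) with the just-obtained gradient bound; then $u(x,t)\pm Kt$ are a super/subsolution with the same initial data, so $|u(x,t)-u_0(x)|\le Kt$, and applying comparison to $u(\cdot,\cdot+s)$ versus $u(\cdot,\cdot)\pm Ks$ (again legitimate by the time-autonomy of the equation) yields $\norm{u(\cdot,t+s)-u(\cdot,t)}_{L^\infty}\le Ks$, i.e. $\norm{u_t}_{L^\infty}\le K$. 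Note the gradient bound must be established first because the bound $K$ on $u_t$ is expressed through $\overline{H}(Du,u)$ and thus requires an a priori gradient bound to be finite.

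The main obstacle, such as it is, is purely bookkeeping: checking that the hypotheses of the general theory in \cite{CIL} are met in the form stated there, in particular that the $|p|^{\gamma_k-1}$-type local Lipschitz growth of $\overline{H}$ in (\ref{effcon}) is compatible with their comparison theorem (it is, because the a priori gradient bounds confine the problem to a region where $\overline{H}$ is genuinely Lipschitz in $p$, and one first derives the bounds using only the boundedness and monotonicity of $\overline{H}$, then upgrades). Since all of this is entirely standard and the proposition itself says "See \cite{CIL}," I would keep the argument to a brief paragraph citing that reference for existence, uniqueness, and comparison, and then give the two-line comparison arguments above for the gradient and time-derivative estimates.
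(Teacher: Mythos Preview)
Your proposal is correct and follows the same approach as the paper, which in fact provides no proof at all beyond the sentence ``The well-posedness of (\ref{eff}) is now an immediate consequence of (\ref{effcon}), (\ref{effcoercive}) and (\ref{effmonotone}) (See Crandall, Ishii and Lions \cite{CIL}).'' Your write-up is an accurate and appropriate expansion of that citation; the only minor imprecision is that $u(x+h,t)+\Lip(u_0)|h|$ is a \emph{supersolution} rather than a solution (since $\overline{H}$ depends on $u$ and is nondecreasing in $r$), but comparison still applies and your conclusion is unaffected.
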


\section{The Metric System}

In this section we introduce, for $\mu\in\mathbb{R}$, $(p,r)\in\mathbb{R}^n\times\mathbb{R}$ and $\omega\in\Omega_1$, the so called ``metric system'' \begin{equation}\label{metric} \left\{\begin{array}{ll} -\tr(A_k(y,\omega)D^2m_{k,\mu})+H_k(p+Dm_{k,\mu},\hat{r},m_{k,\mu}-m_{j,\mu},y,\omega)=\mu & \textrm{on}\;\;\;\mathbb{R}^n\setminus D, \\ m_\mu(\cdot,\omega)=w(\cdot,\omega)-w(x,\omega) & \textrm{on}\;\;\;\partial D, \end{array}\right.\end{equation} for $D$ a closed bounded subset of $\mathbb{R}^n$, $x\in D$ and $w$ the subsolution constructed in Proposition \ref{subsolution} corresponding to $(p,r)$.  The scalar version of (\ref{metric}) was considered in \cite{AS} to prove the homogenization of viscous Hamilton-Jacobi equations in unbounded environments, and the first proofs of homogenization for scalar, first-order Hamilton-Jacobi equations in \cite{LS1,S} were based on the behavior of the first-order, time-dependent version of (\ref{metric}).

We first aim to prove that the metric system is well-posed for each $\mu>\overline{H}(p,r)$.  To obtain this result, we will use crucially the fact that, for each $\mu>\overline{H}(p,r)$, we have by Proposition \ref{subsolution} a strict subsolution of (\ref{metric}) which grows sublinearly at infinity.

\begin{prop}\label{colmetriccomp} Assume (\ref{steady}).  Let $u\in\USC(\mathbb{R}^n;\mathbb{R}^m)$ and $v\in\LSC(\mathbb{R}^n;\mathbb{R}^m)$ be respectively a subsolution and a supersolution of (\ref{metric}) for $\mu>\overline{H}(p,r)$ and $\omega\in\Omega_1$ satisfying \begin{equation}\label{colmetriccomp1} \begin{array}{lll} \min_{1\leq k\leq m} \liminf_{\abs{y}\rightarrow\infty}\frac{v_k(y)}{\abs{y}}\geq 0 & \textrm{and} & \max_{1\leq k\leq m}\limsup_{\abs{y}\rightarrow\infty}\frac{u_k(y)}{\abs{y}}<\infty \end{array}\end{equation} with $u\leq v$ on $\partial D$.  Then $u\leq v$ on $\mathbb{R}^n\setminus D$. \end{prop}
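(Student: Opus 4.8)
The plan is to run a doubling-of-variables argument adapted to the monotone system structure, using the strict sublinear subsolution provided by Proposition \ref{subsolution} as an additive perturbation to force the maximum to be attained at an interior point. The key point is that for $\mu > \overline{H}(p,r)$ the function $w$ from Proposition \ref{subsolution} satisfies, after rescaling, a strict version of (\ref{metric}): since $w$ is a subsolution of (\ref{cellsub}) with right-hand side $\overline{H}(p,r)$, for any $\lambda \in (0,1)$ the function $\lambda w$ is (using convexity (\ref{gradconvex}), (\ref{difconvex}) together with the structure conditions) a subsolution of the equation with right-hand side strictly below $\mu$ on any fixed large ball. More usefully, I would instead add a small multiple of $w$ to the supersolution side, or subtract it from the subsolution side, to create the needed strict gap.

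First I would set up, for $\sigma > 0$ small and $\beta > 0$ small, the perturbed functions: replace $u$ by $u - \sigma(1+|y|^2)^{1/2}$ (or $u - \sigma\varphi$ with $\varphi$ as in Section 4) to exploit (\ref{colmetriccomp1}); the superlinear... rather, the growth hypotheses in (\ref{colmetriccomp1}) guarantee that $u_k(y) - \sigma(1+|y|^2)^{1/2} - v_k(y) \to -\infty$ as $|y| \to \infty$ for every $k$, so the supremum over $\mathbb{R}^n \setminus D$ of $\max_k (u_k - \sigma\varphi - v_k)$ is attained at some point, either on $\partial D$ (where it is $\le 0$ by hypothesis, up to $O(\sigma)$) or at an interior point $y_0 \in \mathbb{R}^n \setminus D$. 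Because $\varphi$ has bounded gradient and Hessian, the perturbation $-\sigma\varphi$ changes the equation only by $C\sigma$; since $H_k$ is continuous in all arguments, for small $\sigma$ the perturbed $u$ is still a subsolution of (\ref{metric}) with $\mu$ replaced by $\mu - c\sigma$ for some fixed $c$, which is still $> \overline{H}(p,r)$ if we also do nothing else — but this alone does not close the argument, so the genuine strictness must come from elsewhere.

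The real mechanism is the following: because $\mu > \overline{H}(p,r)$, Proposition \ref{subsolution} gives $w$ with $\limsup_{|y|\to\infty} |w_k(y)|/|y| = 0$ solving $-\tr(A_k D^2 w_k) + H_k(p+Dw_k,\hat r, w_k - w_j,y,\omega) \le \overline{H}(p,r) < \mu$; I would use a convex combination $\theta u + (1-\theta) w$ or rather compare $u$ against $v$ shifted by $\eta w$ — concretely, I expect to show $u \le v$ by instead proving $u - \eta(w - w(x,\cdot)) \le v$ for all small $\eta$, where the left side is, by convexity, a subsolution with right-hand side $(1-\eta)\mu + \eta \overline{H}(p,r) < \mu$ on a large ball, hence strictly sub-$\mu$; its sublinear growth (inherited from both $u$'s linear-growth bound and $w$'s sublinearity) combined with $v$'s superlinear-from-below... with $v$'s nonnegative sublinear lower bound still forces an interior maximum. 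At that interior maximum point one applies the theorem on sums (Crandall–Ishii–Lions, \cite{CIL}) to the $k$-th component achieving $\max_k$, uses ellipticity of $A_k = \Sigma_k\Sigma_k^t$ (via (\ref{matsquare}), (\ref{lipsigma})) to control the second-order terms, uses the monotonicity (\ref{colmonstrict}) — which is exactly the condition that when $u_k - v_k$ is the largest component difference, $H_k(p,u,\ldots) - H_k(p,v,\ldots) \ge 0$ — to handle the zeroth-order coupling in $r$ and $s$, and derives $\mu \le \mu - (\text{strict gap})$, a contradiction. Letting $\eta \to 0$ and $\sigma \to 0$ then yields $u \le v$ on $\partial D$'s exterior.

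The main obstacle I anticipate is the simultaneous treatment of the $r$-coupling (the $\hat r$ argument is frozen, so this is actually harmless) and, more seriously, the $s$-coupling $m_{k,\mu} - m_{j,\mu}$: at the point where the $k$-th component difference $u_k - v_k$ is maximal among all components, one needs $u_k - u_j \ge v_k - v_j$ (so that the coercive/monotone term points the right way), and this is precisely guaranteed because $u_k - v_k \ge u_j - v_j$ rearranges to $u_k - u_j \ge v_k - v_j$; feeding this through the monotonicity (\ref{hamincrease}) in $s_i$ and the convexity (\ref{difconvex}) is the delicate bookkeeping step. Handling the doubling in the $y$-variable on the noncompact set $\mathbb{R}^n \setminus D$ (rather than a bounded domain) while keeping the penalization and the $\sigma\varphi$ and $\eta w$ perturbations compatible is where the technical care concentrates; everything else follows the now-standard viscosity-solution template and I would cite \cite{CIL}, \cite{IK}, \cite{AS} for the routine parts.
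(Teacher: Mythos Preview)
Your proposal has a genuine gap at the localization step. You claim that the growth hypotheses (\ref{colmetriccomp1}) guarantee $u_k(y) - \sigma(1+|y|^2)^{1/2} - v_k(y) \to -\infty$ as $|y|\to\infty$, but this is false: the hypothesis only gives $\limsup_{|y|\to\infty} u_k(y)/|y| = L < \infty$, not $L\le 0$, while $\liminf_{|y|\to\infty} v_k(y)/|y|\ge 0$. Hence $(u_k - \sigma|y| - v_k)/|y|$ can tend to $L-\sigma>0$ for small $\sigma$, and the supremum is not attained. Taking $\sigma>L$ would localize, but then the $O(\sigma)$ error you introduce in the equation is of order $L$, which has no reason to be smaller than the gap $\mu-\overline{H}(p,r)$. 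Your convex-combination idea does not rescue this: $(1-\eta)u+\eta w$ (the correct convex combination; note that $u-\eta w$ is \emph{not} a subsolution by convexity) still has linear growth rate $(1-\eta)L$, so for small $\eta$ the localization problem persists, while for $\eta$ close to $1$ you are proving $w\le v$, not $u\le v$.

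The paper's proof handles precisely this asymmetry between the growth conditions on $u$ and $v$ by a bootstrap argument. It reduces the claim to showing $\liminf_{|y|\to\infty}(v_k(y)-u_k(y))/|y|\ge 0$ (after which standard comparison on bounded domains applies), and proves this by introducing $\Lambda_k=\{\lambda\in[0,1]:\liminf(v_k-\lambda u_k)/|y|\ge 0\}$ and showing $\bar\lambda:=\min_k\sup\Lambda_k=1$. The point is that at level $\bar\lambda$ one compares $v$ not against $u$ but against $(1-\epsilon)\big((1-\bar\lambda)w+\bar\lambda u\big)+\epsilon(u+a\varphi_R)$, whose linear growth is controlled by $\bar\lambda L$ (already absorbed by $v$ through membership of $(1-\epsilon)\bar\lambda$ in $\Lambda_k$) plus the term $\epsilon(u+a\varphi_R)$ with $a>L$ chosen so that this piece decays. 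Letting $R\to\infty$ then pushes $\bar\lambda$ up to $(1-\epsilon)\bar\lambda+\epsilon>\bar\lambda$, a contradiction. This continuation in $\lambda$ is the missing idea in your proposal; without it, the linear growth of $u$ cannot be absorbed by the strict gap $\mu-\overline{H}(p,r)$ alone.
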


\begin{proof}  Fix $(p,r)\in\mathbb{R}^n\times\mathbb{R}$.  In view of the known comparison principles, it suffices to show that (See \cite{CIL,IK}),\begin{equation*}\label{mcomp1} \min_{1\leq k\leq m}\liminf_{\abs{y}\rightarrow\infty}\frac{v_k(y)-u_k(y)}{\abs{y}}\geq 0.\end{equation*}  For each $k\in\left\{1,\ldots,m\right\}$, consider the set $$\Lambda_k=\left\{\;0\leq \lambda\leq 1\;|\;\liminf_{\abs{y}\rightarrow\infty}\frac{v_k(y)-\lambda u_k(y)}{\abs{y}}\geq 0\;\right\}.$$  The goal is to show that, for each $k\in\left\{1,\ldots,m\right\}$, \begin{equation}\label{compgoal} \Lambda_k=[0,1].\end{equation}

Fix $k\in\left\{1,\ldots,m\right\}$.  By assumption $0\in\Lambda_k$.  A basic argument proves that $\Lambda_k=[0,\overline{\lambda}_k]$ for some $0\leq \overline{\lambda_k}\leq 1$ (See \cite{AS}).  If $\limsup_{\abs{y}\rightarrow\infty}{u_k(y)}/{\abs{y}}\leq 0$ then $\overline{\lambda}_k=1$.  We may therefore assume that, for some $j\in\left\{1,\ldots,m\right\}$, we have $\limsup_{\abs{y}\rightarrow\infty}{u_j(y)}/{\abs{y}}\geq 0$.

To prove (\ref{compgoal}) we argue by contradiction.  Assume that \begin{equation*}\label{mcomp2} \overline{\lambda}=\min_{1\leq k\leq m}\overline{\lambda}_k<1, \end{equation*} where we include the possibility $\overline{\lambda}=0$.

For each $R>0$, let $$\varphi_R(y)=R-(R^2+\abs{y}^2)^{1/2}.$$  It is immediate that there exists $C>0$ such that $\norm{D\varphi_R}_{L^\infty(\mathbb{R}^n)}\leq C$ and $\norm{D^2\varphi_R}_{L^\infty(\mathbb{R}^n)}\leq C$ uniformly for $R\geq 1$ and, furthermore, as $R\rightarrow\infty$ \begin{equation}\label{mcomp3}\begin{array}{ll} \varphi_R\rightarrow 0 & \textrm{locally uniformly on}\;\;\;\mathbb{R}^n.\end{array}\end{equation}

Let $w$ denote the subsolution constructed in Proposition \ref{subsolution} corresponding to $(p,r)$.  By subtracting a constant we may assume $w\leq 0$ on $D$.

Let \begin{equation*}\label{comp4} \tilde{u}=(1-\overline{\lambda})w+\overline{\lambda}u\end{equation*}  and observe that, in view of (\ref{gradconvex}) and (\ref{difconvex}), $\tilde{u}$ satisfies \begin{equation*}\label{mcomp5}\left\{\begin{array}{ll} -\tr(A_k(y,\omega)D^2\tilde{u}_k)+H_k(p+D\tilde{u}_k,\hat{r},\tilde{u}_k-\tilde{u}_j,y,\omega) \leq (1-\overline{\lambda})\overline{H}(p,r)+\overline{\lambda}\mu & \textrm{on}\;\;\;\mathbb{R}^n\setminus D, \\ \tilde{u}\leq v & \textrm{on}\;\;\;\partial D. \end{array}\right.\end{equation*} Notice that, because $\mu>\overline{H}(p,r)$, this implies $\tilde{u}$ is a strict subsolution of (\ref{metric}).

For $0<\epsilon<1$ and $a>\max_{1\leq k\leq m}\limsup_{\abs{y}\rightarrow\infty}\frac{u_k(y)}{\abs{y}}\geq 0$, let $$\tilde{u}_R(y)=(1-\epsilon)\tilde{u}+\epsilon(u+a\varphi_R).$$  It follows from (\ref{gradconvex}) and (\ref{difconvex}) that, for some $C>0$ independent of $R\geq 1$, $\tilde{u}_R$ satisfies the system $$\left\{\begin{array}{ll} -\tr(A_k(y,\omega)D^2\tilde{u}_{k,R})+H_k(p+D\tilde{u}_{k,R},\hat{r},\tilde{u}_{k,R}-\tilde{u}_{j,R},y,\omega) & \textrm{on}\;\;\;\mathbb{R}^n\setminus D, \\ \;\;\;\;\;\;\;\;\leq (1-\epsilon)(1-\overline{\lambda})\overline{H}(p,r)+(1-\epsilon)\overline{\lambda}\mu+\epsilon\mu+aC\epsilon & \\ \tilde{u}_R\leq v+\max_{y\in D}(a\epsilon\varphi_R(y)) & \textrm{on}\;\;\;\partial D.\end{array}\right.$$  Observe that, because $\overline{\lambda}<1$ and $\mu>\overline{H}(p,r)$, $\tilde{u}_R$ is a strict subsolution of (\ref{metric}) for all $\epsilon$ sufficiently small.

In view of the choice of $a>0$, for each $k\in\left\{1,\ldots,m\right\}$, \begin{equation*} \liminf_{\abs{y}\rightarrow\infty}\frac{v_k(y)-\tilde{u}_{k,R}(y)}{\abs{y}}\geq \liminf_{\abs{y}\rightarrow\infty}\frac{v_k(y)-(1-\epsilon)\overline{\lambda}u_k(y)}{\abs{y}}-\epsilon\liminf_{\abs{y}\rightarrow\infty}\frac{u_k(y)+a\varphi_R(y)}{\abs{y}}\geq 0.\end{equation*}  The comparison principle implies, for each $R\geq 1$, \begin{equation*}\label{mcomp7}\max_{1\leq k\leq m}\sup_{y\in\mathbb{R}^n\setminus D} \left(\tilde{u}_{k,R}-v_k\right)\leq \sup_{y\in\partial D} (\epsilon a\varphi_R)\end{equation*}  and, after letting $R\rightarrow\infty$, we have by (\ref{mcomp3}) that \begin{equation*}\label{mcomp8}\max_{1\leq k\leq m}\sup_{y\in\mathbb{R}^n\setminus D}\left((1-\epsilon)(1-\overline{\lambda})w_k+((1-\epsilon)\overline{\lambda}+\epsilon) u_k-v_k\right)\leq 0.\end{equation*}  The strict sub-linearity of $w$ at infinity yields \begin{equation*}\label{mcomp9}\min_{1\leq k\leq m}\liminf_{\abs{y}\rightarrow\infty}\frac{v_k(y)-((1-\epsilon)\overline{\lambda}+\epsilon)u_k(y)}{\abs{y}}\geq 0.\end{equation*} Therefore, for each $k\in\left\{1,\ldots,m\right\}$ and each $\epsilon>0$ sufficiently small, $0<\overline{\lambda}< ((1-\epsilon)\overline{\lambda}+\epsilon)\in\Lambda_k$, contradicting the definition of $\overline{\lambda}$.  \end{proof}

Proposition \ref{colmetriccomp} yields that, for $\mu>\overline{H}(p,r)$, a solution to (\ref{metric}) is unique provided it satisfies the required growth conditions at infinity.  The existence of such a solution follows from Perron's method.  For this, it is necessary to build an appropriate supersolution of (\ref{metric}).

The supersolutions available depend on the minimal coercivity of the Hamiltonians $H_k$, and are constructed in a manner similar to the analogous fact in \cite{AS}.  We therefore only sketch the argument.  Let \begin{equation*}\label{mincoercive} \gamma=\min_{1\leq k\leq m}\gamma_k,\end{equation*} define, for each $x\in\mathbb{R}^n$,  \begin{equation*}\label{metricsubset} D_\epsilon(x)=\left\{\begin{array}{ll} \left\{x\right\} & A_k=0\;\;\textrm{for each}\;\;k\in\left\{1,\ldots,m\right\}\;\;\textrm{or}\;\; \gamma>2, \\ \overline{B}_\epsilon(x) & A_k\neq 0\;\;\textrm{for some}\;\;k\in\left\{1,\ldots,m\right\}\;\;\textrm{and}\;\; \gamma\leq 2, \end{array}\right.\end{equation*} and consider the metric system \begin{equation}\label{colmetricpart} \left\{\begin{array}{ll} -\tr(A_k(y,\omega)D^2m_{k,\mu})+H_k(p+Dm_{k,\mu},\hat{r},m_{k,\mu}-m_{j,\mu},y,\omega)=\mu & \textrm{on}\;\;\;\mathbb{R}^n\setminus D_1(x), \\ m_\mu(y)=w_k(y,\omega)-w_k(x,\omega) & \textrm{on}\;\;\;\partial D_1(x). \end{array}\right. \end{equation}

\begin{prop}\label{colmetricexist}  Assume (\ref{steady}).  For each $(p,r)\in\mathbb{R}^n\times\mathbb{R}$, $\omega\in\Omega_1$ and $\mu>\overline{H}(p,r)$, (\ref{colmetricpart}) admits a unique solution $m_\mu=(m_{1,\mu},\ldots,m_{m,\mu})$ subject to the growth conditions \begin{equation}\label{colmetricgrowth} 0\leq \min_{1\leq k\leq m} \liminf_{\abs{y}\rightarrow\infty}\frac{m_{k,\mu}(y,\omega)}{\abs{y}}\leq \max_{1\leq k\leq m}\limsup_{\abs{y}\rightarrow\infty}\frac{m_{k,\mu}(y,\omega)}{\abs{y}}<\infty.\end{equation}  Furthermore, there exists $C=C(p,r)>0$ such that $$\max_{1\leq k\leq m}\norm{Dm_{\mu,k}}_{L^\infty(\mathbb{R}^n\setminus D_1(x))}\leq C.$$ \end{prop}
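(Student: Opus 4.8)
The plan is to obtain existence via Perron's method, using Proposition \ref{colmetriccomp} for uniqueness once the growth conditions (\ref{colmetricgrowth}) are in place. First I would construct a subsolution and a supersolution of (\ref{colmetricpart}) bracketing the desired solution. For the subsolution, I take $w_k(\cdot,\omega)-w_k(x,\omega)$ with $w$ the sublinear subsolution of (\ref{cellsub}) from Proposition \ref{subsolution}; since $\mu>\overline{H}(p,r)$ and $w$ solves (\ref{cellsub}) with right-hand side $\overline{H}(p,r)$, this is a (strict) subsolution of the PDE in (\ref{colmetricpart}) that matches the boundary data on $\partial D_1(x)$, and it grows sublinearly, in particular satisfying the lower bound in (\ref{colmetricgrowth}) trivially. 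The main work is the supersolution, which must grow at least linearly (to dominate $w$ and to pin down the growth from above) while respecting the coercivity (\ref{coercive}). Following the construction in \cite{AS}, I would look for a radial barrier of the form $m^+_k(y)=C\,\xi(\abs{y})$ for a suitable increasing concave-or-convex profile $\xi$ depending on whether $\gamma>2$ or $\gamma\le 2$ and on whether the $A_k$ vanish — this is exactly why $D_\epsilon(x)$ is defined by cases. When $\gamma>2$ or all $A_k=0$, a function behaving like $\abs{y}^{\gamma/(\gamma-1)}$ near the puncture (or a linear-growth function globally) can absorb the first-order term via $C_1\abs{p+Dm^+_k}^{\gamma_k}$ on the left; when $\gamma\le 2$ and some $A_k\neq 0$ one enlarges the hole to a ball $\overline B_\epsilon(x)$ so the second-order term $-\tr(A_kD^2m^+_k)$ does not overwhelm the coercive gain. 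In all cases one checks, using (\ref{coercive}), (\ref{lipsigma}), (\ref{matsquare}) and (\ref{bounded}), that for $C$ large the radial function is a supersolution on $\mathbb{R}^n\setminus D_1(x)$ lying above the boundary data on $\partial D_1(x)$, and it has linear growth, giving the upper bound in (\ref{colmetricgrowth}).

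With a sub/supersolution pair, Perron's method (in the form valid for monotone systems; see \cite{IK,CIL}) produces a solution $m_\mu$ squeezed between them, hence satisfying (\ref{colmetricgrowth}); monotonicity (\ref{monotone}) and (\ref{colmonstrict}) are what make the Perron construction for the coupled system go through. Uniqueness among solutions obeying (\ref{colmetricgrowth}) is then immediate from Proposition \ref{colmetriccomp}: any two such solutions are mutually sub- and supersolutions with matching boundary data and admissible growth, so each dominates the other.

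Finally, the gradient bound $\max_k\norm{Dm_{k,\mu}}_{L^\infty(\mathbb{R}^n\setminus D_1(x))}\le C(p,r)$ follows from the coercivity-based Bernstein/Lipschitz estimates used already for (\ref{cell}) in Proposition \ref{cellsol}: the equation in (\ref{colmetricpart}) has bounded right-hand side $\mu$, the coefficients satisfy (\ref{lipsigma}) and (\ref{hamcon}), and the coercivity (\ref{coercive}) with $\gamma_k>1$ converts pointwise a priori control of $m_\mu$ into a gradient bound — the only modification from \cite{AS1,AS,LS1} is that one works on the punctured domain and uses the boundary data (which is Lipschitz, being a difference of values of $w$) to start the estimate near $\partial D_1(x)$.

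The step I expect to be the main obstacle is the construction of the supersolution in the borderline regime $\gamma\le 2$ with nonvanishing diffusion: there the competition between the degenerate-elliptic second-order term and the sub-quadratic coercivity is delicate, and getting a barrier with the right linear growth at infinity — rather than faster growth, which would break the upper bound in (\ref{colmetricgrowth}) and hence uniqueness — requires a careful choice of radial profile and of the size of the excised ball $\overline B_\epsilon(x)$. Everything else is a routine transcription of the scalar arguments in \cite{AS} together with the monotone-system comparison/Perron machinery of \cite{IK,CIL} and the already-established Proposition \ref{colmetriccomp}.
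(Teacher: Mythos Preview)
Your approach is essentially the paper's: uniqueness via Proposition~\ref{colmetriccomp}, existence via Perron with $\tilde w(\cdot)=w(\cdot,\omega)-w(x,\omega)$ as subsolution and a radial barrier as supersolution, and the Lipschitz bound by the Bernstein argument of Proposition~\ref{cellsol}. Two technical points need correction. First, the paper does not take a purely radial supersolution but rather $\tilde w$ \emph{plus} a radial part: $z=\tilde w+a\abs{y-x}$ in the first-order case, $z=\tilde w+a(\abs{y-x}^{(\gamma-2)/(\gamma-1)}+\abs{y-x})$ when $\gamma>2$, and $z=\tilde w+a(\abs{y-x}-1)$ when $\gamma\le 2$. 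The point is that each radial part vanishes on $\partial D_1(x)$, so $z=\tilde w$ there exactly; the Perron solution is then pinched to the boundary data with no separate barrier argument, and $\tilde w\le m_\mu\le z$ immediately gives (\ref{colmetricgrowth}). Second, your exponent $\gamma/(\gamma-1)$ is wrong: for $\gamma>2$ it lies in $(1,2)$, so near the puncture the gradient of $r^{\gamma/(\gamma-1)}$ vanishes while its Hessian blows up, and $-\tr(A_k D^2 z)\to-\infty$ kills the supersolution property. The correct exponent is $(\gamma-2)/(\gamma-1)\in(0,1)$, chosen precisely so that $\abs{Dz}^{\gamma}$ blows up at the puncture at least as fast as $\abs{D^2 z}$; the added linear term $a\abs{y-x}$ then supplies the linear growth at infinity.
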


\begin{proof}  The uniqueness follows by Proposition (\ref{colmetriccomp}).  The regularity follows by Bernstein's method and is nothing more than a repetition of the argument presented in Proposition \ref{cellsol}.

The existence of a solution follows by Perron's method.  Basic properties of viscosity solutions imply that it suffices to consider $w=(w_1,\ldots,w_m)$ satisfying, for each $k\in\left\{1,\ldots,m\right\}$, $w_k\in\Lip(\mathbb{R}^n)$ and $Dw_k\in\Lip(\mathbb{R}^n;\mathbb{R}^n)$.

Proposition \ref{statsub} yields that $\tilde{w}(\cdot,\omega)=w(\cdot,\omega)-w(x,\omega)$ is a global subsolution of (\ref{metric}) satisfying $\tilde{w}(\cdot,\omega)=w(\cdot,\omega)-w(x,\omega)$ on $\partial D_1(x)$.  It remains to construct an appropriate supersolution.

In view of the definition of $\gamma$, there exists $a>0$ such that \begin{equation*}\label{metricsuper} z(y)=\left\{\begin{array}{ll} \tilde{w}(y)+a\abs{y-x} & \textrm{if}\;\;A_k=0\;\;\textrm{for all}\;\;k\in\left\{1,\ldots,m\right\}, \\ \tilde{w}(y)+a(\abs{y-x}^{(\gamma-2)/(\gamma-1)}+\abs{y-x}) & \textrm{if}\;\;A_k\neq 0\;\;\textrm{for some}\;\;k\;\;\textrm{and}\;\;\gamma>2, \\ \tilde{w}(y)+a(\abs{y-x}-1) & \textrm{if}\;\;A_k\neq 0\;\;\textrm{for some}\;\;k\;\;\textrm{and}\;\;\gamma\leq 2, \end{array} \right.\end{equation*} is a supersolution of (\ref{colmetricpart}) satisfying $z=\tilde{w}$ on $\partial D_1(x)$.

Perron's method yields a solution $m_\mu=(m_{1,\mu},\ldots,m_{m,\mu})$ of (\ref{colmetricpart}) satisfying $$\begin{array}{ll}\tilde{w}\leq m_\mu\leq z & \textrm{on}\;\;\;\mathbb{R}^n\setminus D_1(x)\end{array}$$  with $m_\mu=\tilde{w}$ on $\partial D_1(x)$, which implies (\ref{colmetricgrowth}).\end{proof}

For each $(p,r)\in\mathbb{R}^n\times\mathbb{R}$, we write $m_\mu(y,x,\omega)$ for the solution of (\ref{colmetricpart}) corresponding to $\mathbb{R}^n\setminus D_1(x)$ and $\omega\in\Omega_1$.  In the case $A_k\neq 0$ for some $k\in\left\{1,\ldots,m\right\}$ and $\gamma\leq 2$, we extend $m_\mu(y,x,\omega)$ to $\mathbb{R}^n\times\mathbb{R}^n\times\Omega_1$ by \begin{equation}\label{metext}\begin{array}{ll} m_\mu(y,x,\omega)=w(y,\omega)-w(x,\omega) & \textrm{for all}\;\;\;\abs{x-y}<1.\end{array}\end{equation}

\begin{prop}\label{metlip} Assume (\ref{steady}).  For each $\mu>\overline{H}(p,r)$ and $\omega\in\Omega_1$, there exists $C=C(p,r)>0$ such that $$\max_{1\leq k\leq m}\norm{Dm_{k,\mu}}_{L^{\infty}(\mathbb{R}^n\times\mathbb{R}^n)}\leq C.$$\end{prop}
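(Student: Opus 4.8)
The statement to prove is a global, uniform-in-$x$ Lipschitz bound for the metric solutions $m_\mu(\cdot,x,\omega)$. We already have from Proposition \ref{colmetricexist} a Lipschitz bound $\norm{Dm_{k,\mu}}_{L^\infty(\mathbb{R}^n\setminus D_1(x))}\leq C$ for each fixed $x$, with $C=C(p,r)$ independent of $x$ (the Bernstein argument in Proposition \ref{cellsol} produces constants depending only on $(p,r,n)$ and the structural constants). The plan is therefore to (i) confirm that this per-$x$ estimate is genuinely uniform in $x$, and (ii) patch in the region near $\partial D_1(x)$ and, in the case where $m_\mu$ was extended by $w(y,\omega)-w(x,\omega)$ for $\abs{x-y}<1$ via (\ref{metext}), use the Lipschitz bound on $w$ from Proposition \ref{subsolution}.

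First I would treat the exterior region. Fix $(p,r)$ and $\omega\in\Omega_1$. For $y$ with $\abs{y-x}\geq 1$, the solution $m_\mu(\cdot,x,\omega)$ solves (\ref{colmetricpart}) on $\mathbb{R}^n\setminus D_1(x)$, and the Bernstein-type computation (differentiating the equation, testing against a cutoff, using (\ref{matsquare}), (\ref{lipsigma}), (\ref{hamcon}) and the coercivity (\ref{coercive}) to absorb the gradient term) gives an interior gradient bound on $\{\abs{y-x}>1+\rho\}$ depending only on $(p,r,n,\rho)$ and the structural constants — crucially not on $x$, by translation invariance of the estimate in $y$. Near the boundary sphere $\partial D_1(x)$ we combine this with the boundary data: on $\partial D_1(x)$ we have $m_\mu = w(\cdot,\omega)-w(x,\omega)$, whose tangential derivative is controlled by $\norm{Dw}_{L^\infty}\leq C$ from (\ref{subsolutionest}), and the normal derivative is controlled via the barriers $\tilde w$ (lower) and $z$ (upper) constructed in the proof of Proposition \ref{colmetricexist}, both of which have gradients bounded uniformly in $x$. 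Squeezing $\tilde w \leq m_\mu \leq z$ with matching boundary values yields a boundary gradient estimate, and the standard combination of boundary and interior estimates closes the bound on all of $\mathbb{R}^n\setminus D_1(x)$ uniformly in $x$.

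In the case $A_k\neq 0$ for some $k$ and $\gamma\leq 2$, it remains to handle the ball $D_1(x)$ itself, where by (\ref{metext}) we have $m_\mu(y,x,\omega)=w(y,\omega)-w(x,\omega)$, so $Dm_\mu(\cdot,x,\omega)=Dw(\cdot,\omega)$ there and the bound $\norm{Dw_k}_{L^\infty(\mathbb{R}^n)}\leq C$ from (\ref{subsolutionest}) applies directly; one then checks that the two definitions match continuously across $\partial D_1(x)$ so that the global Lipschitz constant is the maximum of the two, again independent of $x$. When $D_\epsilon(x)=\{x\}$ (i.e. $A_k\equiv 0$ or $\gamma>2$) there is no ball to fill in and the exterior estimate already covers $\mathbb{R}^n\setminus\{x\}$, hence all of $\mathbb{R}^n$ after noting $m_\mu(x,x,\omega)=0$.

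The main obstacle is the uniformity in $x$ of the boundary gradient estimate near $\partial D_1(x)$: one must verify that the barriers $z$ and $\tilde w$ and the Bernstein cutoff argument all produce constants that are translation-invariant in the base point $x$. This is where (\ref{lipsigma}) (a Lipschitz bound on $\Sigma_k$ with a constant independent of $\omega$, hence independent of translations) and the stationarity-free, purely structural nature of the coercivity and continuity hypotheses (\ref{coercive}), (\ref{hamcon}) do the work; the argument is essentially a repetition of the corresponding step in \cite{AS}, so I would state it briefly and refer there for the details of the Bernstein computation.
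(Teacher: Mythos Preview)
You have misread the statement. The norm is $L^\infty(\mathbb{R}^n\times\mathbb{R}^n)$, and $Dm_{k,\mu}$ here refers to the full gradient of $m_{k,\mu}(y,x,\omega)$ in \emph{both} spatial variables $(y,x)$. Proposition~\ref{colmetricexist} already gives the bound $\norm{D_ym_{k,\mu}(\cdot,x,\omega)}_{L^\infty(\mathbb{R}^n\setminus D_1(x))}\le C(p,r)$ with a constant that is explicitly independent of $x$; your entire argument is devoted to re-establishing this (plus the trivial extension inside $D_1(x)$ via (\ref{metext}) and the Lipschitz bound on $w$). What is actually new in Proposition~\ref{metlip}, and what the paper proves, is Lipschitz continuity in the \emph{second} variable: for fixed $y$, the map $x\mapsto m_{k,\mu}(y,x,\omega)$ is Lipschitz with a constant depending only on $(p,r)$. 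Your proposal does not address this at all.

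The paper's argument for the $x$-variable is not a Bernstein estimate but a comparison argument: given $x_1,x_2$, one compares $m_\mu(\cdot,x_1,\omega)$ and $m_\mu(\cdot,x_2,\omega)$ as sub/supersolutions of the same system on $\mathbb{R}^n\setminus(D_1(x_1)\cup D_1(x_2))$, so by Proposition~\ref{colmetriccomp} the maximum of their difference is attained on $D_1(x_1)\cup D_1(x_2)$. On that set one controls the difference by $C\abs{x_1-x_2}$ using the already-known $y$-Lipschitz bound from Proposition~\ref{colmetricexist} together with the explicit form of the boundary data $w(\cdot,\omega)-w(x_i,\omega)$ and the Lipschitz bound on $w$ from (\ref{subsolutionest}). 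This step is essential downstream: Proposition~\ref{sublimit} uses Lipschitz dependence on the base point to pass from $m_\mu(ty+x,x,\omega)$ to $m_\mu(ty,0,\omega)$ in the limit, and Proposition~\ref{colsub1} uses the full $\mathbb{R}^n\times\mathbb{R}^n$ Lipschitz bound to build the subadditive modification $\tilde m_\mu$.
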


\begin{proof}  In view of Proposition \ref{colmetricexist}, it suffices to prove that there exists $C=C(p,r)>0$ satisfying, for each $y\in\mathbb{R}^n$ and $\omega\in\Omega_1$, $$\max_{1\leq k\leq m}\norm{Dm_{k,\mu}(y,\cdot,\omega)}_{L^\infty(\mathbb{R}^n)}\leq C.$$  Fix $y\in\mathbb{R}^n$, $\omega\in\Omega_1$ and $\mu>\overline{H}(p,r)$ and let $x_1,x_2\in\mathbb{R}^n$.  The comparison principle implies $$\max_{1\leq k\leq m }(\sup_{y\in\mathbb{R}^n}m_{k,\mu}(y,x_1,\omega)-m_{k,\mu}(y,x_2,\omega))=\max_{1\leq k\leq m}(\max_{y\in D_1(x_1)\cup D_1(x_2)}m_{k,\mu}(y,x_1,\omega)-m_{k,\mu}(y,x_2,\omega)).$$

If $A_k=0$ for each $k\in\left\{1,\ldots,m\right\}$ or $\gamma>2$, we conclude in view of Proposition \ref{colmetricexist} that there exists $C=C(p,r)>0$ satisfying, for each $\mu>\overline{H}(p,r)$ and $\omega\in\Omega_1$, $$\max_{1\leq k\leq m}(\sup_{y\in\mathbb{R}^n}m_{k,\mu}(y,x_1,\omega)-m_{k,\mu}(y,x_2,\omega))\leq C\abs{x_1-x_2}.$$

If $A_k\neq 0$ for some $k\in\left\{1,\ldots,m\right\}$ and $\gamma\leq 2$, Propositions \ref{colmetricexist} and \ref{subsolution} yield that, for every $y\in D_1(x_1)$, there exists $C=C(p,r)>0$ satisfying, for each $k\in\left\{1,\ldots,m\right\}$, \begin{multline*} m_{k,\mu}(y,x_1,\omega)-m_{k,\mu}(y,x_2,\omega)=\left(w_k(y,\omega)-w_k(x_1,\omega)\right)-m_{k,\mu}(y,x_2,\omega) \\ \leq w_k(x_2,\omega)-w_k(x_1,\omega)\leq C\abs{x_1-x_2}.  \end{multline*}

If $y\in D_2(x)\setminus D_1(x_1)$, we have \begin{multline}\label{metlip1} \max_{1\leq k\leq m}\left(m_{k,\mu}(y,x_1,\omega)-m_{k,\mu}(y,x_2,\omega)\right)=\max_{1\leq k\leq m}\left(m_{k,\mu}(y,x_1,\omega)-(w_k(y,\omega)-w_k(x_2,\omega)\right) \\ =\max_{1\leq k\leq m}\left(m_{k,\mu}(y,x_1,\omega)-(w_k(y,\omega)-w_k(x_1,\omega))+(w_k(x_2,\omega)-w_k(x_1,\omega))\right).\end{multline} In view of Propositions \ref{subsolution} and \ref{colmetricexist}, there exists $C=C(p,r)>0$ satisfying, for each $k\in\left\{1,\ldots,m\right\}$, $$m_{k,\mu}(y,x_1,\omega)-(w_k(y,\omega)-w_k(x_1,\omega))\leq C(\abs{y-x_1}-1)\leq C\abs{x_2-x_1}$$ and $$\abs{w_k(x_2,\omega)-w_k(x_1,\omega)}\leq C\abs{x_2-x_1}.$$  Therefore, using (\ref{metlip1}), $$\max_{1\leq k\leq m}\left(m_{k,\mu}(y,x_1,\omega)-m_{k,\mu}(y,x_2,\omega)\right)\leq C\abs{x_2-x_1}.$$

We obtain the opposite inequality by reversing the roles of $x_1$ and $x_2$.  \end{proof}

We show next that the processes $m_\mu(x,y,\omega)$ are jointly stationary and subadditive up to a modification in the case $A_k\neq 0$ for some $k\in\left\{1,\ldots,m\right\}$ and $\gamma\leq 2$.  These facts, together with the subadditive ergodic theorem, will be used in the next section to prove the homogenization of (\ref{colmetricpart}).

\begin{prop}\label{colsub1}  Assume (\ref{steady}).  If $A_k=0$ for each $k\in\left\{1,\ldots,m\right\}$ or $\gamma>2$, then for all $(p,r)\in\mathbb{R}^n\times\mathbb{R}$, $\mu>\overline{H}(p,r)$ and $\omega\in\Omega_1$, the processes $m_\mu$ are jointly stationary in the sense that, for all $x,y,z\in\mathbb{R}^n$, \begin{equation}\label{colmetricjoint} m_\mu(y,x,\tau_z\omega)=m_\mu(y+z,x+z,\omega), \end{equation} and subadditive in the sense that, for all $x,y,z\in\mathbb{R}^n$, \begin{equation}\label{subadditive5} m_\mu(y,x,\omega)\leq m_\mu(y,z,\omega)+m_\mu(z,x,\omega). \end{equation}

If $A_k\neq 0$ for some $k\in\left\{1,\ldots,m\right\}$ and $\gamma\leq 2$, then there exists $C=C(p,r)>0$ such that, for each $(p,r)\in\mathbb{R}^n\times\mathbb{R}$, $\mu>\overline{H}(p,r)$ and $\omega\in\Omega_1$, the process $$\tilde{m}_\mu(y,x,\omega)=m_\mu(y,x,\omega)+C$$ is stationary and subadditive in the sense of (\ref{colmetricjoint}) and (\ref{subadditive5}). \end{prop}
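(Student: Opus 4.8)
The plan is to obtain joint stationarity from uniqueness of the metric system and subadditivity from the comparison principle of Proposition~\ref{colmetriccomp} (supplemented, in the point-obstacle case, by a variational description of $m_\mu$).

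\emph{Joint stationarity.} I would first collect the covariance facts behind~(\ref{colmetricjoint}). By~(\ref{stationary1}), $A_k(y,\tau_z\omega)=A_k(y+z,\omega)$ and $H_k(\cdot,\cdot,\cdot,y,\tau_z\omega)=H_k(\cdot,\cdot,\cdot,y+z,\omega)$. Moreover the gradient $Dw$ of the subsolution of Proposition~\ref{subsolution} is a weak-$*$ limit of the stationary processes $Dv^\delta_k$ and is therefore itself stationary, so $Dw_k(\cdot,\tau_z\omega)$ and $Dw_k(\cdot+z,\omega)$ agree almost everywhere; since $w_k$ is Lipschitz they differ by a constant in $y$, whence
\[
w_k(y,\tau_z\omega)-w_k(x,\tau_z\omega)=w_k(y+z,\omega)-w_k(x+z,\omega)\qquad\text{for all }x,y,z,
\]
after intersecting $\Omega_1$ with one further set of full probability. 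Fixing $x,z$ and putting $\hat m_k(y):=m_{k,\mu}(y+z,x+z,\omega)$ for $y\in\mathbb{R}^n\setminus D_1(x)$ (using $D_1(x+z)=z+D_1(x)$), the change of variables $y\mapsto y+z$ in~(\ref{colmetricpart}) together with these covariance identities shows that $\hat m$ solves the metric system for $\tau_z\omega$ on $\mathbb{R}^n\setminus D_1(x)$ with boundary datum $w_k(\cdot,\tau_z\omega)-w_k(x,\tau_z\omega)$ on $\partial D_1(x)$; the growth conditions~(\ref{colmetricgrowth}) are translation invariant, so $\hat m$ obeys them too. Uniqueness (Propositions~\ref{colmetriccomp} and~\ref{colmetricexist}) then forces $\hat m=m_\mu(\cdot,x,\tau_z\omega)$, which is~(\ref{colmetricjoint}); the same change of variables shows the extension~(\ref{metext}) is consistent with this identity. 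The argument is the same in both cases of the proposition.

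\emph{Subadditivity.} Fix $(p,r)$, $\mu>\overline{H}(p,r)$, $\omega\in\Omega_1$ and $x,z\in\mathbb{R}^n$, and recall that $m_\mu(\cdot,x)$ is a global subsolution of the metric PDE. Adding the single scalar $\ell:=\max_{1\leq i\leq m}m_{i,\mu}(z,x,\omega)$ to every component of $m_\mu(\cdot,z)$ leaves the system for $z$ unchanged, so $y\mapsto m_\mu(y,z,\omega)+(\ell,\ldots,\ell)$ is again a solution on $\mathbb{R}^n\setminus D_1(z)$; using $\tilde w\leq m_\mu(\cdot,x)$ from Proposition~\ref{colmetricexist}, the Lipschitz estimate of Proposition~\ref{metlip}, and $\norm{Dw_k}_{L^\infty(\mathbb{R}^n)}\leq C(p,r)$ from Proposition~\ref{subsolution}, it dominates $m_\mu(\cdot,x)$ on $\partial D_1(z)$ up to an additive constant that vanishes when $D_1(z)=\{z\}$ and is bounded by $C(p,r)$ when $D_1(z)=\overline{B}_1(z)$. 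Since both functions grow at most linearly with matching leading behaviour, Proposition~\ref{colmetriccomp} applies on $\mathbb{R}^n\setminus D_1(z)$ and gives $m_{k,\mu}(y,x)\leq m_{k,\mu}(y,z)+\ell+C$ for every $k$. When $A_k\neq0$ for some $k$ and $\gamma\leq 2$, I would then bound $\ell-m_{k,\mu}(z,x,\omega)$ uniformly in $\mu$ and $\omega$ --- using the coercivity~(\ref{coercive}) in the differences and the growth of $w$ --- and absorb it into $C$, which gives~(\ref{subadditive5}) for $\tilde m_\mu=m_\mu+C$. When $D_1$ is a single point the boundary datum of the metric system vanishes identically and $C=0$; there I would derive the exact inequality~(\ref{subadditive5}) from the description of $m_\mu(\cdot,x)$ as the componentwise supremum, over global subsolutions $\phi$ of the metric system satisfying the growth bound of~(\ref{colmetricgrowth}), of the increments $\phi_k(\cdot)-\phi_k(x)$, since then $\phi_k(y)-\phi_k(x)=(\phi_k(y)-\phi_k(z))+(\phi_k(z)-\phi_k(x))$ immediately yields subadditivity.

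\emph{Main obstacle.} The only step beyond a transcription of the scalar arguments in~\cite{AS} is the subadditivity, and its heart is the coupling through the differences $m_{k,\mu}-m_{j,\mu}$: one cannot simply add the vector $m_\mu(z,x)$ to $m_\mu(\cdot,z)$ and retain a supersolution, so in the viscous subquadratic case the additive constant is genuinely needed and one must check it is independent of $\mu$ and $\omega$; in the point-obstacle case one must instead establish the variational description above for the coupled system, the monotonicity~(\ref{hamincrease})--(\ref{hamdecrease}) of the $H_k$ in the differences being precisely what makes the supremum a subsolution. With stationarity and subadditivity of $\tilde m_\mu$ in place, both feed directly into the subadditive ergodic theorem in the next section.
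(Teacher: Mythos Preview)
Your treatment of joint stationarity is essentially the paper's: uniqueness of the metric problem together with the stationarity of $A_k$, $H_k$ and of $Dw$.

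For subadditivity you take a genuinely different route, and there is a gap. You compare on $\mathbb{R}^n\setminus D_1(z)$, which forces you to ``recall that $m_\mu(\cdot,x)$ is a global subsolution of the metric PDE.'' This is not established anywhere in the paper and is not obvious: at the point $x$ (in the first case) the coupling in the $k$th equation is $m_{k,\mu}(x,x)-m_{j,\mu}(x,x)=0$, whereas the strict subsolution $\tilde w=w(\cdot)-w(x)$ sitting below $m_\mu$ carries coupling $w_k-w_j$ at $x$; a test function touching $m_{k,\mu}(\cdot,x)$ from above at $x$ also touches $\tilde w_k$, but the two subsolution inequalities involve different $s$-arguments and~(\ref{hamincrease}) goes the wrong way unless $k$ happens to maximize $w_i(x)$. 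Your variational description of $m_\mu(\cdot,x)$ as a componentwise supremum over global subsolutions rests on exactly the same unproved claim (you need $m_\mu(\cdot,x)$ itself to be admissible to get equality), so it does not close the gap.

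The paper avoids this entirely by comparing on $\mathbb{R}^n\setminus\{x,z\}$ (respectively $\mathbb{R}^n\setminus(D_1(x)\cup D_1(z))$): with \emph{both} obstacles removed, $m_\mu(\cdot,x)$ is only required to be a subsolution where it already solves the equation. The boundary check then reduces to two points (or two balls), and the key tool is the ``miracle'' inequality $0\le m_\mu(y,x)+m_\mu(x,y)$, which the paper reads off from $\tilde w\le m_\mu$ in Proposition~\ref{colmetricexist}; it handles the boundary at $y=x$, while $y=z$ is an equality. In the ball case the same two-obstacle comparison is run, with the Lipschitz bounds of Proposition~\ref{metlip} controlling the discrepancy on $\overline B_1(x)\cup\overline B_1(z)$; this is where the additive constant $C$ appears. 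Your observation that one cannot add the vector $m_\mu(z,x)$ to the supersolution is correct and is indeed the heart of the matter, but the cure is to excise both base points rather than to promote $m_\mu(\cdot,x)$ to a global subsolution.
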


\begin{proof}  We omit for both cases the proof of (\ref{colmetricjoint}), which is identical to the proof of Proposition \ref{cellsol} and follows immediately from Proposition \ref{colmetricexist} and (\ref{stationary1}).  To prove (\ref{subadditive5}), we first consider the case that either $A_k=0$ for each $k\in\left\{1,\ldots,m\right\}$ or $\gamma>2$ and fix $x,y,z\in\mathbb{R}^n$ and $\omega\in\Omega_1$.  It follows from Proposition \ref{colmetriccomp} that, for all $y\in\mathbb{R}^n$, $$w(y,\omega)-w(x,\omega)\leq m(y,x,\omega).$$  After reversing the roles of $x$ and $y$, we conclude that \begin{equation}\label{colmetricmiracle} 0\leq m_\mu(y,x,\omega)+m_\mu(x,y,\omega).\end{equation}

In view of Proposition \ref{colmetriccomp} with $D=\left\{x,z\right\}$, we have $$\begin{array}{ll} m_\mu(y,x,\omega)\leq m_\mu(y,z,\omega)+m_\mu(z,x,\omega) & \textrm{for all}\;\;\;y\in\mathbb{R}^n\end{array}$$ provided the inequality holds on $D=\left\{x,z\right\}$.  But, we have equality at $y=z$ and the inequality at $y=x$ is (\ref{colmetricmiracle}).

We now consider the case that $A_k\neq 0$ for some $k\in\left\{1,\ldots,m\right\}$ and $\gamma\leq 2$.  We fix $x,y,z\in\mathbb{R}^n$ and $\omega\in\Omega_1$ and choose $C>0$ satisfying $$C>2\max_{1\leq k\leq m}\norm{Dm_{k,\mu}}_{L^\infty(\mathbb{R}^n\times\mathbb{R}^n)}.$$

In view of Proposition \ref{colmetriccomp} with $D=D_1(x)\cup D_1(z)$, we have $$\begin{array}{ll} \tilde{m}_\mu(y,x,\omega)\leq\tilde{m}_\mu(y,z,\omega)+\tilde{m}_\mu(z,x,\omega) & \textrm{for all}\;\;\;y\in\mathbb{R}^n\end{array}$$ provided the inequality holds on $D_1(x)\cup D_1(z)$.

If $y\in D_1(x)$, then \begin{multline*} \tilde{m}_{k,\mu}(y,x,\omega)=w_k(y)-w_k(x)+C=(w_k(y,\omega)-w_k(z,\omega))+(w_k(z,\omega)-w_k(x,\omega))+C \\ \leq m_{k,\mu}(y,z,\omega)+m_{k,\mu}(z,x,\omega)+C\leq \tilde{m}_{k,\mu}(y,z,\omega)+\tilde{m}_{k,\mu}(z,x,\omega),\end{multline*} where Proposition \ref{colmetriccomp} is used to obtain the second to last inequality.

If $y\in D_1(z)$, then $\abs{m_{k,\mu}(y,z,\omega)}=\abs{w_k(y,\omega)-w_k(z,\omega)}\leq C/2$ and \begin{multline*}\tilde{m}_{k,\mu}(y,x,\omega)\leq m_{k,\mu}(z,x,\omega)+3C/2 \\ \leq m_{k,\mu}(z,x,\omega)+w_k(y,\omega)-w_k(z,\omega)+2C=\tilde{m}_{k,\mu}(y,z,\omega)+\tilde{m}_{k,\mu}(z,x,\omega).\end{multline*} \end{proof}

We conclude this section with a continuous dependence estimate, similar to Proposition \ref{contdep}, for the $m_\mu$ with respect to $(p,r)$.  As before, we only obtain local continuity estimates.

\begin{prop}\label{metcontdep}  Assume (\ref{steady}).  Fix $\mu>\overline{H}(p,r)$, $R>0$ and write $m^i_\mu$ for the solution of (\ref{colmetricpart}) corresponding to $(p_i,r_i)\in B_R\times B_R$.

If $A_k=0$ for each $k\in\left\{1,\ldots,m\right\}$ or $\gamma>2$ then, for each $\omega\in\Omega_1$, there exists $C=C(R)>0$ satisfying, for all $(p_i,r_i)\in B_R\times B_R$, $$\begin{array}{ll} \max_{1\leq k\leq m}\abs{m^1_{k,\mu}(y,x,\omega)-m^2_{k,\mu}(y,x,\omega)}\leq \frac{C}{\mu-\overline{H}(p,r)}(\abs{p_1-p_2}+\abs{r_1-r_2})\abs{y-x} & \textrm{on}\;\;\;\mathbb{R}^n\times\mathbb{R}^n.\end{array}$$

If $A_k\neq 0$ for some $k\in\left\{1,\ldots,m\right\}$ and $\gamma\leq 2$ then, for each $\omega\in\Omega_1$, there exists $C=C(R)>0$ satisfying, for all $(p_i,r_i)\in B_R\times B_R$, $$\begin{array}{ll} \max_{1\leq k\leq m}\abs{m^1_{k,\mu}(y,x,\omega)-m^2_{k,\mu}(y,x,\omega)}\leq C+\frac{C}{\mu-\overline{H}(p,r)}(\abs{p_1-p_2}+\abs{r_1-r_2})\abs{y-x} & \textrm{on}\;\;\;\mathbb{R}^n\times\mathbb{R}^n.\end{array}$$\end{prop}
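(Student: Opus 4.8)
The plan is to derive both estimates from a single comparison argument, applied twice with the roles of the indices $1$ and $2$ interchanged. Write $w^1,w^2$ for the sublinearly growing subsolutions of (\ref{cellsub}) supplied by Proposition \ref{subsolution} for $(p_1,r_1)$ and $(p_2,r_2)$, and set $\tilde{w}^i=w^i(\cdot,\omega)-w^i(x,\omega)$, so that $m^i_\mu(\cdot,x,\omega)=\tilde{w}^i$ on $\partial D_1(x)$ and $\tilde{w}^i(x)=0$. The idea is to manufacture out of $m^2_\mu$ and $\tilde{w}^1$ an admissible subsolution of the $(p_1,r_1)$--metric equation with right-hand side exactly $\mu$, and then invoke the comparison principle, Proposition \ref{colmetriccomp}.

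First I would quantify the Hamiltonian discrepancy. By Proposition \ref{metlip} the gradients $Dm^2_{k,\mu}$ are bounded by a constant depending only on $R$ on $B_R\times B_R$, and replacing $(p_2,r_2)$ by $(p_1,r_1)$ leaves the $s$--slot $m^2_{k,\mu}-m^2_{j,\mu}$ unchanged, so (\ref{hamcon}) produces $\Delta=C(R)(\abs{p_1-p_2}+\abs{r_1-r_2})$ such that $m^2_\mu$ is a viscosity subsolution of the $(p_1,r_1)$--metric equation with right-hand side $\mu+\Delta$ on $\mathbb{R}^n\setminus D_1(x)$. Since the constant shift by $w^1(x,\omega)$ preserves being a subsolution of (\ref{cellsub}) (only the differences $\tilde{w}^1_k-\tilde{w}^1_j$, gradients and Hessians enter), $\tilde{w}^1$ solves (\ref{cellsub}) for $(p_1,r_1)$ with right-hand side $\overline{H}(p_1,r_1)<\mu$. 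By the convexity assumptions (\ref{gradconvex}) and (\ref{difconvex}), used exactly as in the proof of Proposition \ref{colmetriccomp}, the convex combination $\Phi=(1-\theta)m^2_\mu+\theta\tilde{w}^1$ is, for any $\theta\in(0,1)$, a subsolution of the $(p_1,r_1)$--metric equation with right-hand side $(1-\theta)(\mu+\Delta)+\theta\overline{H}(p_1,r_1)$; choosing $\theta=\Delta/(\Delta+\mu-\overline{H}(p_1,r_1))$ makes this equal to $\mu$. Moreover $\Phi$ grows at most linearly at infinity, since $m^2_\mu$ does by Proposition \ref{colmetricexist} and $\tilde{w}^1$ grows sublinearly by Proposition \ref{subsolution}, so $\Phi$ is admissible as a subsolution in Proposition \ref{colmetriccomp}.

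It then remains to compare $\Phi$ with $m^1_\mu$ on $\partial D_1(x)$ and read off the conclusion. When $A_k=0$ for all $k$ or $\gamma>2$ we have $D_1(x)=\{x\}$, and there $\Phi=(1-\theta)\cdot 0+\theta\cdot 0=0=m^1_\mu(\cdot,x,\omega)$, so Proposition \ref{colmetriccomp} gives $\Phi\le m^1_\mu$ on $\mathbb{R}^n\setminus\{x\}$, i.e. $m^1_{k,\mu}-m^2_{k,\mu}\ge\theta(\tilde{w}^1_k-m^2_{k,\mu})$; running the argument with $1$ and $2$ swapped gives the reverse inequality, and since the uniform gradient bounds together with $m^i_{k,\mu}(x,x,\omega)=0$ yield $\abs{m^i_{k,\mu}(y,x,\omega)}\le C(R)\abs{y-x}$, and likewise for $\tilde{w}^i$, we conclude $\abs{m^1_{k,\mu}-m^2_{k,\mu}}\le\theta\,C(R)\abs{y-x}\le\frac{C(R)}{\mu-\overline{H}(p,r)}(\abs{p_1-p_2}+\abs{r_1-r_2})\abs{y-x}$, where $\mu-\overline{H}(p,r)$ stands for the smaller of $\mu-\overline{H}(p_i,r_i)$. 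When instead $A_k\ne 0$ for some $k$ and $\gamma\le 2$, $D_1(x)=\overline{B}_1(x)$ and on $\partial D_1(x)$ one only obtains $\Phi_k-m^1_{k,\mu}=(1-\theta)(\tilde{w}^2_k-\tilde{w}^1_k)$, which is bounded by $C(R)$ because $\abs{y-x}=1$ there; subtracting the constant $C(R)$ from every component of $\Phi$ (still a subsolution) restores $\Phi-C(R)\le m^1_\mu$ on $\partial D_1(x)$, so Proposition \ref{colmetriccomp} and the index swap produce the same bound with an extra additive $C(R)$, while on $D_1(x)$ itself the extension (\ref{metext}) makes $m^1_\mu-m^2_\mu$ equal to $(\tilde{w}^1-\tilde{w}^2)(y)$, which is at most $C(R)$.

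The main obstacle is precisely the mismatch of boundary data: $m^1_\mu$ and $m^2_\mu$ carry the distinct values $\tilde{w}^1$ and $\tilde{w}^2$, which agree only at the single point $x$; on a punctured domain this costs nothing and the estimate is clean, whereas on $\mathbb{R}^n\setminus\overline{B}_1(x)$ it is what forces the additive constant in the second case. The other quantitative input, the choice $\theta\sim\Delta/(\mu-\overline{H}(p_1,r_1))$ needed to absorb the discrepancy $\Delta$ into the gap $\mu-\overline{H}(p_1,r_1)$, is what produces the factor $(\mu-\overline{H}(p,r))^{-1}$.
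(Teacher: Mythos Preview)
Your proposal is correct and follows essentially the same approach as the paper: form a convex combination of the metric solution for one index with the sublinear subsolution $\tilde{w}$ for the other, choose the interpolation parameter $\theta$ (the paper's $1-\lambda$) to absorb the Hamiltonian discrepancy $\Delta$ into the gap $\mu-\overline{H}$, and then apply the comparison principle of Proposition~\ref{colmetriccomp}, handling the boundary mismatch in the $\gamma\le 2$ case by a constant shift. The only differences from the paper are cosmetic---you swap the roles of the indices $1$ and $2$ and subtract the boundary-correction constant after forming the convex combination rather than before---neither of which affects the argument.
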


\begin{proof}  Fix $\omega\in\Omega_1$ and $x\in\mathbb{R}^n$.  Let $w^i$ be the subsolution of (\ref{cellsub}) constructed in Proposition \ref{subsolution} corresponding to $(p_i,r_i)$.  And, let $\tilde{w}^i(\cdot)=w^i(\cdot)-w^i(x)$.

We first consider the case that $A_k=0$ for each $k\in\left\{1,\ldots,m\right\}$ or $\gamma>2$.  In view of Proposition \ref{metlip} and (\ref{hamcon}), there exists $C=C(R)>0$ such that $m^1_\mu$ is a solution of the inequality \begin{equation}\label{metcontdep1}-\tr(A_k(y,\omega)D^2m^1_{k,\mu})+H_k(p_2+Dm^1_{k,\mu},\hat{r}_2,m^1_{k,\mu}-m^1_{j,\mu},y,\omega)\leq \mu+C(\abs{p_1-p_2}+\abs{r_1-r_2})\end{equation} on $\mathbb{R}^n\setminus \left\{x\right\}$ satisfying $m^1_\mu(x)=0$.

Define $0<\lambda\leq1$ by $$\lambda=\frac{\mu-\overline{H}(p,r)}{C(\abs{p_1-p_2}+\abs{r_1-r_2})+(\mu-\overline{H}(p,r))}.$$  In view of (\ref{gradconvex}) and (\ref{difconvex}), the function $\lambda m^1_\mu+(1-\lambda)\tilde{w}^2$ is a subsolution of (\ref{colmetricpart}) corresponding to $(p_2,r_2)$.

Proposition \ref{colmetriccomp} implies $$\begin{array}{ll} m^1_\mu-m^2_\mu\leq m^1_\mu-(\lambda m^1_\mu+(1-\lambda)\tilde{w}^2)=(1-\lambda)(m^1_\mu-\tilde{w}^2) & \textrm{on}\;\;\;\mathbb{R}^n.\end{array}$$  In view of Propositions \ref{subsolution} and \ref{colmetricexist}, there exists $C=C(R)>0$ satisfying, for every $y\in\mathbb{R}^n$, $$\max_{1\leq k\leq m}(m^1_{k,\mu}(y,x,\omega)-\tilde{w}^2_k(y,\omega))\leq C\abs{y-x}.$$  Therefore, by the definition of $\lambda$, we conclude that there exists $C=C(R)>0$ such that, for each $y\in\mathbb{R}^n$, $$\max_{1\leq k\leq m}(m^1_{k,\mu}(y,x,\omega)-m^2_{k,\mu}(y,x,\omega))\leq \frac{C}{\mu-\overline{H}(p,r)}(\abs{p_1-p_2}+\abs{r_1-r_2})\abs{y-x}.$$  We obtain the opposite inequality by reversing the roles of $m^1_\mu$ and $m^2_\mu$.

We now consider the case that $A_k\neq 0$ for some $k\in\left\{1,\ldots,m\right\}$ and $\gamma\leq 2$.  In view of Proposition \ref{subsolution}, for $C_1=C_1(R)>0$, $$\max_{1\leq k\leq m}\norm{\tilde{w}^1_k-\tilde{w}^2_k}_{L^\infty(D_1(x))}\leq \max_{1\leq k\leq m}(\norm{D\tilde{w}^1_k}_{L^\infty(\mathbb{R}^n)}+\norm{D\tilde{w}^2_k}_{L^\infty(\mathbb{R}^n)})\leq C_1.$$  And, therefore, $m^1_\mu-\widehat{C}_1$ is a subsolution of (\ref{metcontdep1}) on $\mathbb{R}^n\setminus D_1(x)$ satisfying $m^1_\mu-\widehat{C}_1\leq \tilde{w}^2=m^2_\mu$ on $D_1(x)$.

For $0<\lambda\leq 1$ as above, the function $\lambda(m^1_\mu-\widehat{C}_1)+(1-\lambda)\tilde{w}^2$ is a subsolution of (\ref{colmetricpart}) corresponding to $(p_2,r_2)$.  This, in view of Proposition \ref{colmetriccomp}, implies $$\begin{array}{ll} m^1_\mu-m^2_\mu\leq m^1_\mu-(\lambda(m^1_\mu-\widehat{C}_1)+(1-\lambda)\tilde{w}^2)=\lambda\widehat{C}_1+(1-\lambda)(m^1_\mu-\tilde{w}^2) & \textrm{on}\;\;\mathbb{R}^n.\end{array}$$  It follows from Propositions \ref{subsolution} and \ref{colmetricexist} that there exists $C_2=C_2(R)>0$ satisfying $$\max_{1\leq k\leq m}\abs{m^1_{k,\mu}(y,x,\omega)-\tilde{w}^2_k(y,\omega)}\leq C_1+C_2\abs{y-x}.$$  Therefore, by definition of $\lambda$, there exists $C=C(R)>0$ satisfying, for each $y\in\mathbb{R}^n$, $$\max_{1\leq k\leq m}(m^1_{k,\mu}(y,x,\omega)-m^2_{k,\mu}(y,x,\omega))\leq C+\frac{C}{\mu-\overline{H}(p,r)}(\abs{p_1-p_2}+\abs{r_1-r_2})\abs{y-x}.$$  We obtain the opposite inequality be reversing the roles of $m^1_\mu$ and $m^2_\mu$. \end{proof}

\section{The Homogenization and Collapse of the Metric System}

We introduce, for each $(p,r)\in\mathbb{R}^n\times\mathbb{R}$, $\mu>\overline{H}(p,r)$, $\omega\in\Omega_1$ and $\epsilon>0$, the rescaled metric system \begin{equation}\label{colmetricres} \left\{\begin{array}{ll} -\epsilon\tr(A_k(\frac{y}{\epsilon},\omega)D^2 m^\epsilon_{k,\mu})+H_k(p+Dm^\epsilon_{k,\mu},\hat{r},\frac{m^\epsilon_{k,\mu}-m^\epsilon_{j,\mu}}{\epsilon},\frac{y}{\epsilon},\omega)=\mu & \textrm{on}\;\;\; \mathbb{R}^n\setminus D_\epsilon(0), \\ m^\epsilon_\mu(\cdot)=\epsilon w(\frac{\cdot}{\epsilon})-\epsilon w(0) & \textrm{on}\;\;\;\partial D_\epsilon(0), \end{array}\right. \end{equation} which, in view of Proposition \ref{colmetricexist}, has the unique solution $m^\epsilon_\mu(y,0,\omega)=\epsilon m_\mu(\frac{y}{\epsilon},0,\omega)$, for $m_\mu$ the solution of (\ref{colmetricpart}) corresponding to $(p,r)$.

The main result of this section is the homogenization and collapse of (\ref{colmetricres}) to a deterministic scalar equation.  More precisely, we prove that, as $\epsilon\rightarrow 0$, for each $k\in\left\{1,\ldots,m\right\}$, \begin{equation*}\label{methom1}\begin{array}{ll} m^\epsilon_{k,\mu}\rightarrow\overline{m}_\mu & \textrm{locally uniformly on}\;\;\;\mathbb{R}^n,\end{array}\end{equation*} for $\overline{m}_\mu:\mathbb{R}^n\rightarrow\mathbb{R}$ satisfying \begin{equation}\label{methom2}\left\{\begin{array}{ll} \overline{H}(p+D\overline{m}_\mu,r)=\mu & \textrm{on}\;\;\;\mathbb{R}^n\setminus\left\{0\right\}, \\ \overline{m}_\mu(0)=0.\end{array}\right.\end{equation}

We first make use of the subadditive ergodic theorem to identify the limit, as $\epsilon\rightarrow 0$, of the solutions $m^\epsilon_\mu$ almost surely.  We then show that this limit is deterministic and coincides with $\overline{m}_\mu$, the solution of (\ref{methom2}).

\begin{prop}\label{sublimit}  Assume (\ref{steady}).  There exists a subset $\Omega_2\subset\Omega_1$ of full probability such that, for each $(p,r)\in\mathbb{R}^n\times\mathbb{R}$, there exists $M_\mu\in \Lip(\mathbb{R}^n)$ satisfying, for each $y\in\mathbb{R}^n$, $\omega\in\Omega_2$ and $k\in\left\{1,\ldots,m\right\}$, $$\lim_{t\rightarrow \infty}\frac{1}{t}m_{k,\mu}(ty,0,\omega)=M_\mu(y).$$\end{prop}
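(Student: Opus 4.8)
The plan is to obtain $M_\mu$ by applying the subadditive ergodic theorem (Proposition \ref{subadditiveergodic}) in each fixed rational direction, and then to upgrade the resulting directional limits to a single Lipschitz function via the uniform gradient bound of Proposition \ref{metlip}. First I would fix $(p,r)\in\mathbb{Q}^n\times\mathbb{Q}$ and $\mu>\overline{H}(p,r)$, and for a fixed rational direction $e\in\mathbb{Q}^n$ with $\abs{e}=1$ I would define, for intervals $[a,b)\in\mathcal{I}$, the quantity $Q([a,b))(\omega)=m_{k,\mu}(be,ae,\omega)$ (using the modification $\tilde m_\mu$ from Proposition \ref{colsub1} in the case $A_k\neq 0$, $\gamma\leq 2$, so that subadditivity holds without error). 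The semigroup of measure-preserving transformations is $\sigma_t=\tau_{te}$; joint stationarity (\ref{colmetricjoint}) gives property (1), the Lipschitz bound of Proposition \ref{metlip} gives property (2) with $C$ independent of the interval, and subadditivity (\ref{subadditive5}) gives property (3). Proposition \ref{subadditiveergodic} then yields, almost surely and with a deterministic limit $a=a_k(e)$ by ergodicity, that $\frac1t m_{k,\mu}(te,0,\omega)\to a_k(e)$. A separate application in the direction $-e$ together with (\ref{colmetricmiracle}) shows the limit in the two opposite directions are negatives of each other, and Proposition \ref{metlip} bounds $\abs{a_k(e)}$ uniformly, so we may define $M_\mu(te)=t\,a_k(e)$ for $t\geq 0$ and all rational $e$.

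Next I would show the limit is independent of $k$. By (\ref{colmonstrict}) and the comparison principle applied componentwise (or directly from Proposition \ref{colmetriccomp}), the components $m_{k,\mu}(\cdot,0,\omega)$ differ from one another by a bounded amount on $\mathbb{R}^n$ — this is essentially the content of the estimate $\norm{m_{k,\mu}-m_{j,\mu}}_{L^\infty(B_R)}\leq C$ obtained by the same argument as in Proposition \ref{cellsol}, but one needs the bound to be genuinely global; alternatively, one compares $m_{k,\mu}$ against $m_{j,\mu}$ shifted by the (bounded) boundary discrepancy. Dividing by $t$ and sending $t\to\infty$ kills any bounded discrepancy, so $a_k(e)$ does not depend on $k$; write $a(e)$. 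Then I would extend $M_\mu$ to all of $\mathbb{R}^n$: for arbitrary $y,y'\in\mathbb{Q}^n$ the Lipschitz bound of Proposition \ref{metlip} gives $\abs{m_{k,\mu}(ty,0,\omega)-m_{k,\mu}(ty',0,\omega)}\leq Ct\abs{y-y'}$, so $\abs{M_\mu(y)-M_\mu(y')}\leq C\abs{y-y'}$ on the rationals, whence $M_\mu$ extends uniquely to a $C\,$-Lipschitz function on $\mathbb{R}^n$, and the convergence $\frac1t m_{k,\mu}(ty,0,\omega)\to M_\mu(y)$ persists for all $y\in\mathbb{R}^n$ by interpolating with the uniform Lipschitz estimate. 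Finally, intersecting the countably many full-probability events (one for each rational $(p,r,e)$ and each $k$) produces the set $\Omega_2\subset\Omega_1$; the continuous-dependence estimate of Proposition \ref{metcontdep} then transfers the conclusion to all real $(p,r)$ on this same set.

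The main obstacle I anticipate is verifying the hypotheses of the subadditive ergodic theorem cleanly in the degenerate case $A_k\neq 0$, $\gamma\leq 2$, where the metric system is posed on $\mathbb{R}^n\setminus D_1(x)$ rather than $\mathbb{R}^n\setminus\{x\}$ and exact subadditivity fails; one must work with $\tilde m_\mu=m_\mu+C$, check that the additive constant does not spoil the $\frac1t$ scaling (it does not, since $C$ is fixed), and make sure the integrability bound (2) holds uniformly — which it does by Proposition \ref{metlip}. A secondary subtlety is ensuring the $k$-independence argument is valid globally rather than merely on compact sets, since the division by $t$ requires the discrepancy between components to be $o(t)$ uniformly in $y$; this follows from the global form of the comparison estimate but should be stated carefully. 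Once these points are settled, the remainder is the routine combination of the Lipschitz extension and a countable intersection of null sets.
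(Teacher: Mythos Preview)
Your overall architecture matches the paper's proof: apply the subadditive ergodic theorem direction by direction, intersect over countably many rational data, extend via the Lipschitz bound of Proposition~\ref{metlip}, and pass to all real $(p,r)$ through Proposition~\ref{metcontdep}. The treatment of the degenerate case via $\tilde m_\mu$ and the $k$-independence via the estimate of Proposition~\ref{cellsol} (transplanted to the rescaled metric system) are also exactly what the paper does.

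There is, however, one genuine gap. You assert that the subadditive limit $a_k(e)$ is \emph{deterministic} ``by ergodicity,'' invoking the last clause of Proposition~\ref{subadditiveergodic}. That clause requires the one-parameter semigroup $\sigma_t=\tau_{te}$ to be ergodic, and this is \emph{not} implied by the ergodicity of the full action $(\tau_y)_{y\in\mathbb{R}^n}$: a one-dimensional subflow of an ergodic $\mathbb{R}^n$-action can easily have nontrivial invariant sets. So from the subadditive ergodic theorem alone you only get a $\sigma_t$-invariant random variable $a_k(e,\omega)$, not a constant. The paper closes this gap with a short extra step you have omitted: using joint stationarity (\ref{colmetricjoint}) and the Lipschitz bound \emph{in the base-point variable} from Proposition~\ref{metlip}, one shows directly that
\[
M_\mu(y,\tau_x\omega)=\lim_{t\to\infty}\tfrac{1}{t}m_\mu(ty+x,x,\omega)=\lim_{t\to\infty}\tfrac{1}{t}m_\mu(ty,0,\omega)=M_\mu(y,\omega)
\]
for \emph{every} $x\in\mathbb{R}^n$, and then invokes ergodicity of the full $\mathbb{R}^n$-action to conclude $M_\mu$ is constant in $\omega$. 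You have Proposition~\ref{metlip} available; you just need to use it in this additional way.

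A minor side remark: your claim that the limits in directions $e$ and $-e$ ``are negatives of each other'' is incorrect and unnecessary. Inequality (\ref{colmetricmiracle}) only gives $M_\mu(e)+M_\mu(-e)\geq 0$, and in fact $M_\mu>0$ on $\mathbb{R}^n\setminus\{0\}$ (Proposition~\ref{colmetpositive}), so the two values cannot be negatives of one another. Simply apply the subadditive theorem to every $y\in\mathbb{Q}^n$ (not only unit vectors) as the paper does, and no such relation is needed.
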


\begin{proof}  Fix $(p,r)\in\mathbb{R}^n\times\mathbb{R}$.  We first show that there exists a subset $\Omega_2(p,r)$ satisfying the conclusion of Proposition \ref{sublimit} for this $(p,r)$.

Define for each $y\in\mathbb{R}^n$ the semigroup $\left(\sigma_t\right)_{t\geq0}$ of measure-preserving transformations of $\Omega$ by $$\sigma_t(\omega)=\tau_{ty}(\omega).$$  In view of Proposition \ref{colsub1}, in the case $A_k=0$ for each $k\in\left\{1,\ldots,m\right\}$ or $\gamma>2$, we have, for each $k\in\left\{1,\ldots,m\right\}$, $$m_{k,\mu}((t_1+t_2)y,0,\omega)\leq m_{k,\mu}(t_1y,0,\sigma_{t_2}\omega)+m_{k,\mu}(t_2y,0,\omega),$$  and, in the case $A_k\neq 0$ for some $k\in\left\{1,\ldots,m\right\}$ and $\gamma\leq 2$, with the notation of Proposition \ref{colsub1}, we have, for each $k\in\left\{1,\ldots,m\right\}$, $$\tilde{m}_{k,\mu}((t_1+t_2)y,0,\omega)\leq \tilde{m}_{k,\mu}(t_1y,0,\sigma_{t_2}\omega)+\tilde{m}_{k,\mu}(t_2y,0,\omega).$$

The subadditive ergodic theorem (See Proposition \ref{subadditiveergodic}) implies that for each $y\in\mathbb{R}^n$ there exists a subset $\Omega_y\subset\Omega$ of full probability and a random variable $M_\mu(y,\omega)=(M_{1,\mu}(y,\omega),\ldots,M_{m,\mu}(y,\omega))$ such that \begin{equation*}\label{sublimit1}\begin{array}{ll} \lim_{t\rightarrow\infty}\frac{1}{t}m_\mu(ty,0,\omega)=M_\mu(y,\omega) & \textrm{for each}\;\;\;\omega\in\Omega_y.\end{array}\end{equation*}

Let $\Omega_2(p,r)=\Omega_1\cap(\cap_{y\in\mathbb{Q}^n}\Omega_y).$  Then, in view of Proposition \ref{metlip}, we define, for every $\omega\in\Omega_2(p,r)$ and $y\in\mathbb{R}^n$, \begin{equation}\label{metlimit} M_\mu(y,\omega)=\lim_{t\rightarrow\infty}\frac{1}{t}m(ty,0,\omega).\end{equation}

We now show that $M_\mu$ is deterministic.  In view of the assumed ergodicity, it suffices to show that, for each $x,y\in\mathbb{R}^n$ and $\omega\in\Omega_2(p,r)$, $$M_\mu(y,\tau_x\omega)=M_\mu(y,\omega).$$  This again follows from Proposition \ref{metlip}.  Indeed, for each $x,y\in\mathbb{R}^n$ and $\omega\in\Omega_2(p,r)$, $$M_\mu(y,\tau_x\omega)=\lim_{t\rightarrow\infty}\frac{1}{t}m_\mu(ty,0,\tau_x\omega)=\lim_{t\rightarrow\infty}\frac{1}{t}m_\mu(ty+x,x,\omega)=M_\mu(y,\omega).$$

In view of Proposition \ref{metlip} and (\ref{metlimit}), it is immediate that $M_\mu\in \Lip(\mathbb{R}^n;\mathbb{R}^m)$.  Therefore, it remains only to show that $M_\mu$ is scalar.  By repeating the argument presented in Proposition \ref{cellsol} with $\epsilon^{-1}(m^\epsilon_{k,\mu}-m^\epsilon_{j,\mu})$ playing the role of $(v^\delta_k-v^\delta_j)$, for each $R>0$ there exists $C=C(R)>0$ satisfying $$\max_{1\leq i,j\leq m}\norm{m^\epsilon_{i,\mu}-m^\epsilon_{j,\mu}}_{L^\infty(B_R)}\leq \epsilon C $$ and, hence, for each $y\in\mathbb{R}^n$, $\omega\in\Omega_2(p,r)$ and $i,j\in\left\{1,\ldots,m\right\}$, $$ \lim_{\epsilon\rightarrow 0}m^\epsilon_{i,\mu}(y,0,\omega)=\lim_{t\rightarrow\infty}\frac{1}{t}m_{i,\mu}(ty,0,\omega)=\lim_{t\rightarrow\infty}\frac{1}{t}m_{j,\mu}(ty,0,\omega)=\lim_{\epsilon\rightarrow 0}m^\epsilon_{j,\mu}(y,0,\omega).$$  In view of (\ref{metlimit}), we conclude that $M_{i,\mu}=M_{j,\mu}$ for each $i,j\in\left\{1,\ldots,m\right\}$ and, henceforth, we will write $M_\mu$ for the scalar function $M_{k,\mu}$ for each $k\in\left\{1,\ldots,m\right\}$.

We conclude by defining $\Omega_2=\bigcap_{(p,r)\in\mathbb{Q}^n\times\mathbb{Q}}\Omega_2(p,r)$ and applying Proposition \ref{metcontdep}.  \end{proof}

In view of Proposition \ref{subsolution}, as $\epsilon\rightarrow 0$, the expected limiting equation for (\ref{colmetricres}) is \begin{equation}\label{colmetrichom} \left\{\begin{array}{ll} \overline{H}(p+\overline{m}_\mu,r)=\mu & \textrm{on}\;\;\;\mathbb{R}^n\setminus \left\{0\right\}, \\ \overline{m}_\mu(0)=0, & \end{array}\right. \end{equation} subject, in view of (\ref{colmetricgrowth}), to the condition \begin{equation}\label{colmetrichomgrowth} 0\leq \liminf_{\abs{y}\rightarrow\infty}\frac{\overline{m}_\mu(y)}{\abs{y}}\leq\limsup_{\abs{y}\rightarrow\infty}\frac{\overline{m}_\mu(y)}{\abs{y}}<\infty.\end{equation}

\begin{prop}\label{colmetricsol}  Assume (\ref{steady}).  For each $\mu>\overline{H}(p,r)$, $M_\mu$ is the unique solution of (\ref{colmetrichom}) subject to (\ref{colmetrichomgrowth}). \end{prop}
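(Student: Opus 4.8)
The plan is to first pin down a candidate solution of (\ref{colmetrichom}) subject to (\ref{colmetrichomgrowth}), establish its uniqueness, and then identify it with $M_\mu$. Put $K=\{q\in\mathbb{R}^n:\overline{H}(p+q,r)\le\mu\}$. Since $q\mapsto\overline{H}(p+q,r)$ is convex (\ref{effconvex}), coercive (\ref{effcoercive}) and continuous (\ref{effcon}) and $\overline{H}(p,r)<\mu$, the set $K$ is a compact convex body with $0$ in its interior and $\partial K=\{q:\overline{H}(p+q,r)=\mu\}$, so its support function $\overline{m}_\mu(y):=\max_{q\in K}q\cdot y$ is convex, positively $1$-homogeneous, Lipschitz, vanishes at $0$, and satisfies (\ref{colmetrichomgrowth}). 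At every differentiability point $y\ne 0$ the gradient $D\overline{m}_\mu(y)$ is the maximizing point of $K$, which lies on $\partial K$, so the eikonal equation holds there classically; convexity of $\overline{m}_\mu$ then promotes this to the full viscosity solution property on $\mathbb{R}^n\setminus\{0\}$. Uniqueness among solutions satisfying (\ref{colmetrichomgrowth}) follows from a comparison argument identical in structure to Proposition \ref{colmetriccomp}, now for the scalar effective equation, with (\ref{gradconvex}) and (\ref{difconvex}) replaced by (\ref{effconvex}) and the sublinear strict subsolution replaced by the constant $0$ --- admissible precisely because $\overline{H}(p,r)<\mu$.

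It remains to show $M_\mu=\overline{m}_\mu$. Recall $m^\epsilon_\mu(\cdot,0,\omega)=\epsilon m_\mu(\cdot/\epsilon,0,\omega)$ solves (\ref{colmetricres}) and, by Proposition \ref{sublimit} and the Lipschitz bound of Proposition \ref{metlip}, converges locally uniformly to $M_\mu$ on $\Omega_2$; moreover $M_\mu(0)=0$ and $M_\mu$ satisfies (\ref{colmetrichomgrowth}) by passing (\ref{colmetricgrowth}) to the limit together with the $1$-homogeneity of $M_\mu$. The inequality $M_\mu\ge\overline{m}_\mu$ uses the maximality of $m_\mu$ among subsolutions: for $q\in K$ let $w^q$ be the sublinear subsolution of (\ref{cellsub}) corresponding to $(p+q,r)$ from Proposition \ref{subsolution}, suitably normalized. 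Since $\overline{H}(p+q,r)\le\mu$, the function $y\mapsto q\cdot y+\epsilon w^q_k(y/\epsilon,\omega)$ is a subsolution of (\ref{colmetricres}) at level $\mu$ on $\mathbb{R}^n$ (the vector differences equal $w^q_k(\cdot/\epsilon)-w^q_j(\cdot/\epsilon)$, and Hessian and gradient rescale correctly), has at most linear growth, and, after the normalization, lies below $m^\epsilon_\mu$ on $\partial D_\epsilon(0)$; Proposition \ref{colmetriccomp} rescaled then gives $q\cdot y+\epsilon w^q_k(y/\epsilon,\omega)\le m^\epsilon_{k,\mu}(y)$ off $D_\epsilon(0)$, and letting $\epsilon\to 0$ with $w^q$ sublinear yields $M_\mu(y)\ge q\cdot y$; as $q\in K$ was arbitrary, $M_\mu\ge\overline{m}_\mu$. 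In the other direction one also has, by the standard perturbed test function argument, the supersolution property of $M_\mu$ on $\mathbb{R}^n\setminus\{0\}$: if $M_\mu-\phi$ has a strict local minimum at $y_0\ne 0$ and $\overline{H}(p+D\phi(y_0),r)<\mu$, set $\phi^\epsilon_k=\phi+\epsilon w^{D\phi(y_0)}_k(\cdot/\epsilon,\omega)$, use (\ref{hamcon}), (\ref{effcon}) and the sublinearity of $w^{D\phi(y_0)}$ to see $\phi^\epsilon$ is a strict subsolution of (\ref{colmetricres}) near $y_0$, locate a local minimum of $\min_k(m^\epsilon_{k,\mu}-\phi^\epsilon_k)$ near $y_0$, and use the monotonicity (\ref{hamincrease}) at the minimizing component to contradict the supersolution inequality for $m^\epsilon_\mu$.

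To close the gap one needs the reverse inequality $M_\mu\le\overline{m}_\mu$, equivalently the subsolution property of $M_\mu$. I would obtain this by producing, for a fixed direction $y_0\ne 0$ and $q^*\in\partial K$ with $q^*\cdot y_0=\overline{m}_\mu(y_0)$, an approximate supersolution of (\ref{colmetricres}) at level $\mu$ on a ball of radius comparable to $1/\epsilon$ about the origin, built from the solution $v^\delta$ of the approximate macroscopic problem (\ref{cell}) associated to a slope slightly inside $K$ along the segment from $0$ to $q^*$ (so that the corresponding effective value is $\mu-\eta$): $v^\delta$, shifted by a suitable constant and with $\delta$ coupled to $\epsilon$, is an exact solution of (\ref{cell}), hence a supersolution of (\ref{cellsub}) at level $-\delta v^\delta_k$, which lies below $\mu$ on the relevant ball once $-\delta v^\delta_k$ is close enough to $\mu-\eta$ there. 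Comparing this supersolution with $m^\epsilon_\mu$ via Proposition \ref{colmetriccomp}, and feeding in the subadditive ergodic theorem (Proposition \ref{subadditiveergodic}, in the form $a=\inf_t t^{-1}\mathbb{E}\,Q([0,t))$) together with the proximity of $-\delta v^\delta_1$ to the effective value supplied by Proposition \ref{auxenhance}, one arrives at $M_\mu(y_0)\le q^*\cdot y_0+C\eta$; since $\eta>0$ is arbitrary, $M_\mu(y_0)\le\overline{m}_\mu(y_0)$. Combined with the previous step this gives $M_\mu=\overline{m}_\mu$, so $M_\mu$ solves (\ref{colmetrichom}), and by the uniqueness of the first paragraph it is the unique solution subject to (\ref{colmetrichomgrowth}).

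The hard part is exactly this last inequality. The perturbed test function method only delivers the supersolution property of $M_\mu$, because Proposition \ref{subsolution} provides sublinear \emph{sub}solutions of the cell problem and not supersolutions, and genuine supersolution correctors are unavailable for convex Hamiltonians in the stationary ergodic setting; hence the upper bound must be extracted from the maximality and structure of the metric system together with the \emph{averaged} convergence of the approximate macroscopic problem (Proposition \ref{auxenhance}) --- the only form of that convergence available at this stage --- which makes the coupling of the scales $\delta$ and $\epsilon$ and the passage from expectations to the deterministic limit $M_\mu$ the technically delicate point.
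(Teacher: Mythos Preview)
Your proof rests on a misconception that sends you into an unnecessary and, as sketched, flawed detour. You assert that ``the perturbed test function method only delivers the supersolution property of $M_\mu$, because Proposition~\ref{subsolution} provides sublinear \emph{sub}solutions of the cell problem and not supersolutions.'' But the paper does not use the weak--limit corrector $w$ of Proposition~\ref{subsolution} here at all; it uses $w^\delta$, the \emph{solution} of the normalized approximate macroscopic problem (\ref{cellnorm}). Since $w^\delta$ is a genuine solution---in particular a supersolution---the perturbed function $\phi^\epsilon_k=\phi+\epsilon w^\epsilon_k(\cdot/\epsilon)$ is a \emph{supersolution} of (\ref{colmetricres}) near $x_0$ as soon as the right--hand side $-\epsilon v^\epsilon_1(0,\omega)$ is close to $\overline{H}(p+D\phi(x_0),r)$. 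At this stage the paper does not yet have the almost sure convergence of Proposition~\ref{colupgrade}, only the $L^1$ convergence of Proposition~\ref{auxenhance}; but that already yields, for almost every $\omega$, a subsequence $\epsilon_j\to 0$ along which $-\epsilon_j v^{\epsilon_j}_1(0,\omega)\to\overline{H}(p+D\phi(x_0),r)$, and a single subsequence is all one needs to contradict a strict local maximum. Thus the paper obtains the subsolution and supersolution properties of $M_\mu$ symmetrically by the perturbed test function method, and your ``hard part'' simply does not arise. The growth condition (\ref{colmetrichomgrowth}) follows from positivity and one--homogeneity, exactly as you note.

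Your proposed workaround for the subsolution inequality also has a sign problem. You build, from $v^\delta$ at a slope strictly inside $K$, a supersolution of the metric operator at level roughly $\overline{H}(p+q,r)=\mu-\eta$. But $m^\epsilon_\mu$ is a subsolution at level $\mu>\mu-\eta$, and comparing a subsolution at a \emph{higher} level against a supersolution at a \emph{lower} level is the wrong direction: Proposition~\ref{colmetriccomp} does not give $m^\epsilon_\mu\le\psi^\epsilon$. To make a global comparison yield the upper bound you would need a supersolution at level at least $\mu$, hence a slope on or outside $\partial K$, and for that the subsequence--level information from Proposition~\ref{auxenhance} does not suffice. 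The paper's device---using the $\delta$--approximate solution itself as corrector, along a good subsequence---sidesteps this entirely. Your construction of $\overline{m}_\mu$ as the support function and the lower bound $M_\mu\ge\overline{m}_\mu$ are fine and indeed match what the paper records afterwards in (\ref{colmetricformula}), but they are not needed for the proof of this proposition.
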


\begin{proof}  We begin with (\ref{colmetrichomgrowth}).  In view of (\ref{colmetricgrowth}), $M_\mu\geq0$ and, furthermore, it follows by definition that $M_\mu(y)$ is positively one-homogenous in the sense that, for each $t>0$ and $y\in\mathbb{R}^n$, we have $M_\mu(ty)=tM_\mu(y)$.  Therefore, $M_\mu$ satisfies (\ref{colmetrichomgrowth}) because $$0\leq \min_{\abs{y}=1}M_\mu(y)=\liminf_{\abs{y}\rightarrow\infty}\frac{M_\mu(y)}{\abs{y}}\leq \limsup_{\abs{y}\rightarrow\infty}\frac{M_\mu(y)}{\abs{y}}=\max_{\abs{y}=1}M_\mu(y)<\infty.$$

Next we prove $M_\mu$ is a subsolution of (\ref{colmetrichom}).  Arguing by contradiction, we assume that $M_\mu-\phi$ has a strict local maximum at $x_0\in\mathbb{R}^n\setminus\left\{0\right\}$ for $\phi\in C^2(\mathbb{R}^n)$ satisfying \begin{equation}\label{metrichomcontra}\overline{H}(p+D\phi(x_0),r)-\mu=\theta>0.\end{equation}

We follow now the classical perturbed test function method.  For each $\delta>0$, let $w^\delta$ be the solution of (\ref{cellnorm}) corresponding to $(p+D\phi(x_0),r)$.  We define the perturbed test function $$\begin{array}{ll} \phi^\epsilon=(\phi^\epsilon_1,\ldots,\phi^\epsilon_m) & \textrm{with}\;\;\;\phi^\epsilon_k(y)=\phi(y)+\epsilon w^\epsilon_k(\frac{y}{\epsilon},\omega).\end{array}$$

In view of Proposition \ref{auxenhance}, for almost every $\omega\in\Omega_2$, there exists a sequence $\epsilon_j=\epsilon_j(\omega)\rightarrow 0$ satisfying \begin{equation}\label{methom5} \lim_{j\rightarrow\infty}\abs{\epsilon_j v^{\epsilon_j}_1(0,\omega)+\overline{H}(p+D\phi(x_0),r)}=0.\end{equation}  We will show that, for each $\omega\in\Omega_2$ satisfying (\ref{methom5}) and for all $\epsilon_j$ sufficiently small, $\phi^{\epsilon_j}$ is a supersolution of (\ref{colmetricres}) near $x_0$.

We fix $\omega\in\Omega_2$ satisfying (\ref{methom5}) and suppress the dependence on $\omega$ in what follows to simplify notation.  Suppose that, for some $k\in\left\{1,\ldots,m\right\}$ and $\psi\in C^2(\mathbb{R}^n)$, the function $\phi^{\epsilon_j}_k-\psi$ has a local minimum at $y_0\in\mathbb{R}^n$.  Then, the rescaled function $$y\rightarrow w^{\epsilon_j}_k(y)-\frac{1}{\epsilon_j}\left(\psi(\epsilon_j y)-\phi(\epsilon_j y)\right)$$ achieves a local minimum at $\frac{y_0}{\epsilon_j}$.  Because $w^{\epsilon_j}$ is a solution of (\ref{cellnorm}), after returning to the original scaling and evaluated at $y_0$, \begin{equation*} \epsilon_j w^{\epsilon_j}_k-\epsilon_j\tr(A_k(\frac{y_0}{\epsilon_j})(D^2\psi-D^2\phi)) +H_k(p+D\phi(x_0)+D\psi-D\phi,\hat{r},\frac{\phi^{\epsilon_j}_k-\phi^{\epsilon_j}_i}{\epsilon},\frac{y_0}{\epsilon})\geq-\epsilon_j v^{\epsilon_j}_1(0,\omega).\end{equation*}

In view of (\ref{metrichomcontra}) and (\ref{methom5}) and because $\omega\in\Omega_2\subset\Omega_1$, there exists $j_0=j_0(\omega)$ such that, for all $j>j_0$, \begin{equation}\label{methom6}\begin{array}{lll} \abs{\epsilon_j w^{\epsilon_j}(\frac{y_0}{\epsilon_j},\omega)}<\frac{\theta}{4} & \textrm{and} & -\epsilon_j v^{\epsilon_j}_1(0,\omega)>\frac{3\theta}{4}+\mu.\end{array}\end{equation}  Using (\ref{hamcon}), Proposition \ref{cellnormsol} and because $\phi\in C^2(\mathbb{R}^n)$, there exists $r>0$ such that if $\abs{x_0-y_0}<r$ then, for all $j$ sufficiently large, \begin{equation*} \epsilon_j w^{\epsilon_j}_k-\epsilon_j\tr(A_k(\frac{y_0}{\epsilon_j},\omega)D^2\psi) +H_k(p+D\psi,\hat{r},\frac{\phi^{\epsilon_j}_k-\phi^{\epsilon_j}_i}{\epsilon},\frac{y_0}{\epsilon},\omega)\geq -\epsilon_j v^{\epsilon_j}_1(0,\omega)-\frac{\theta}{4}.\end{equation*}  We conclude by (\ref{methom6}) that if $\abs{x_0-y_0}<r$ then, for all $j>j_0$ sufficiently large, \begin{equation*} -\epsilon_j\tr(A_k(\frac{y_0}{\epsilon_j},\omega)D^2\psi) +H_k(p+D\psi,\hat{r},\frac{\phi^{\epsilon_j}_k-\phi^{\epsilon_j}_i}{\epsilon},\frac{y_0}{\epsilon},\omega)>\mu+\frac{\theta}{4}\end{equation*} and, therefore, that $\phi^{\epsilon_j}$ is a strict supersolution of (\ref{colmetrichom}) on $B_r(x_0)$.

The comparison principle implies, for all $j>j_0$ sufficiently large, $$\max_{1\leq k\leq m}\max_{x\in \overline{B}_r(x_0)}(m^{\epsilon_j}_{k,\mu}-\phi^{\epsilon_j}_k)=\max_{1\leq k\leq m}\max_{x\in\partial \overline{B}_r(x_0)}(m^{\epsilon_j}_{k,\mu}-\phi^{\epsilon_j}_k).$$  Since, as $j\rightarrow\infty$, for each $\omega\in\Omega_2$ and $k\in\left\{1,\ldots,m\right\}$, $$\begin{array}{ll} \phi^{\epsilon_j}_k\rightarrow\phi & \textrm{locally uniformly on}\;\;\;\mathbb{R}^n\end{array}$$ and $$\begin{array}{ll} m^{\epsilon_j}_{k,\mu}(\cdot,0,\omega)\rightarrow M_\mu(\cdot) &  \textrm{locally uniformly on}\;\;\;\mathbb{R}^n, \end{array}$$ standard optimization results yield a contradiction to the assumption that $M_\mu-\phi$ has a strict local maximum at $x_0$.

The proof that $M_\mu$ is a supersolution of (\ref{colmetrichom}) is analogous.  \end{proof}

The deterministic limit $M_\mu$ constructed in Proposition \ref{sublimit} is therefore the solution $\overline{m}_\mu$ of (\ref{methom2}) corresponding to $(p,r)$ and $\mu>\overline{H}(p,r)$.  We conclude this section by presenting some properties of the solution $\overline{m}_\mu$ which will be used in the sequel.  Because the arguments are scalar and identical to the analogous facts in \cite{AS} we omit the proofs.

\begin{prop}\label{colmetpositive}  Assume (\ref{steady}).  For each $(p,r)\in\mathbb{R}^n\times\mathbb{R}$ and each $\mu>\overline{H}(p,r)$, $\overline{m}_\mu$ is positively one-homogenous satisfying $\overline{m}_\mu>0$ on $\mathbb{R}^n\setminus\left\{0\right\}$. \end{prop}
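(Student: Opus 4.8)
The plan is to treat the two assertions separately: positive one-homogeneity is immediate from the construction of $\overline{m}_\mu$, while strict positivity will follow from a one-line viscosity argument in which the strict inequality $\mu>\overline{H}(p,r)$ is used crucially.

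For the homogeneity, recall from Proposition~\ref{sublimit} that $\overline{m}_\mu=M_\mu$ with $M_\mu(y)=\lim_{t\to\infty}t^{-1}m_{k,\mu}(ty,0,\omega)$ for every $\omega\in\Omega_2$ and every $k\in\{1,\ldots,m\}$. Given $s>0$ and $y\in\mathbb{R}^n$, the substitution $t\mapsto t/s$ in this limit yields $M_\mu(sy)=\lim_{t\to\infty}(s/t)\,m_{k,\mu}(ty,0,\omega)=sM_\mu(y)$, which is exactly positive one-homogeneity. (This was already noted in passing in the proof of Proposition~\ref{colmetricsol}, so in the write-up I would simply recall it.)

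For the strict positivity I would argue by contradiction. By Proposition~\ref{colmetricsol} and the growth bound (\ref{colmetricgrowth}) we already know that $\overline{m}_\mu\geq 0$ on $\mathbb{R}^n$, that $\overline{m}_\mu(0)=0$, and that $\overline{m}_\mu$ is a viscosity solution — in particular a supersolution — of $\overline{H}(p+D\overline{m}_\mu,r)=\mu$ on $\mathbb{R}^n\setminus\{0\}$. Suppose $\overline{m}_\mu(y_0)=0$ for some $y_0\neq 0$. Then $y_0$ is a global minimum point of $\overline{m}_\mu$, so the constant function $\phi\equiv 0\in C^2(\mathbb{R}^n)$ touches $\overline{m}_\mu$ from below at the interior point $y_0\in\mathbb{R}^n\setminus\{0\}$. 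Testing the supersolution inequality at $y_0$ against $\phi$ gives $\overline{H}(p+D\phi(y_0),r)=\overline{H}(p,r)\geq\mu$, contradicting the standing hypothesis $\mu>\overline{H}(p,r)$. Hence $\overline{m}_\mu>0$ on $\mathbb{R}^n\setminus\{0\}$; equivalently, by the one-homogeneity just established, $\min_{\abs{y}=1}\overline{m}_\mu(y)>0$.

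There is no genuine analytic obstacle here: the only inputs are the nonnegativity of $\overline{m}_\mu$ and its characterization as a viscosity supersolution of the effective eikonal-type equation, both already contained in Proposition~\ref{colmetricsol}. The one point worth emphasizing is that the argument must break when $\mu=\overline{H}(p,r)$, since then $\overline{m}_\mu\equiv 0$ solves (\ref{colmetrichom}); this confirms that the hypothesis $\mu>\overline{H}(p,r)$ is used precisely at the final contradiction and nowhere else, which is a useful consistency check for the write-up.
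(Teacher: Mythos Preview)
Your proof is correct. The paper itself omits the argument entirely, deferring to \cite{AS} with the remark that the proof is scalar and identical to the analogous fact there; your viscosity-touching argument for strict positivity (testing the supersolution inequality with $\phi\equiv 0$ at a hypothetical interior zero) is precisely the standard route, and the homogeneity computation from the defining limit is the obvious one already recorded in the proof of Proposition~\ref{colmetricsol}.
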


We now obtain a formula for the solution $\overline{m}_\mu$ in terms of the effective Hamiltonian.

\begin{prop}  Assume (\ref{steady}).  For each $(p,r)\in\mathbb{R}^n\times\mathbb{R}$ and $\mu>\overline{H}(p,r)$, $\overline{m}_\mu$ is convex and given by the formula \begin{equation}\label{colmetricformula} \overline{m}_\mu(y)=\sup\left\{\;q\cdot y\;|\;q\in\mathbb{R}^n\;\;\textrm{satisfies}\;\;\overline{H}(p+q)\leq \mu\;\right\}.\end{equation}\end{prop}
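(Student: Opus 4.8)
The plan is to recognize the right-hand side of (\ref{colmetricformula}) as the support function of a compact convex set and to verify directly that this support function solves the homogenized metric equation (\ref{colmetrichom}) subject to the growth bound (\ref{colmetrichomgrowth}); the identification with $\overline{m}_\mu$ then follows from the uniqueness statement of Proposition \ref{colmetricsol}, and the convexity of $\overline{m}_\mu$ is immediate from the formula.

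First I would fix $(p,r)\in\mathbb{R}^n\times\mathbb{R}$ and $\mu>\overline{H}(p,r)$ and set
$$Z_\mu=\left\{\,q\in\mathbb{R}^n:\overline{H}(p+q,r)\leq\mu\,\right\},\qquad \ell_\mu(y)=\sup_{q\in Z_\mu}q\cdot y,$$
and record the structure of $Z_\mu$. By the convexity (\ref{effconvex}) and the local Lipschitz continuity (\ref{effcon}) of $\overline{H}$ in $p$, the set $Z_\mu$ is closed and convex; by the coercivity (\ref{effcoercive}) it is bounded; and since $\overline{H}(p,r)<\mu$ it contains a ball about the origin. Using the convexity of $\overline{H}$ in $p$ together with $\overline{H}(p,r)<\mu$ one also checks that $\Int Z_\mu=\{\,q:\overline{H}(p+q,r)<\mu\,\}$. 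It follows that $\ell_\mu$ is finite, convex, positively one-homogeneous and globally Lipschitz, with $\ell_\mu(0)=0$ and $0\leq\ell_\mu(y)\leq\bigl(\sup_{q\in Z_\mu}|q|\bigr)|y|$; in particular $\ell_\mu$ satisfies (\ref{colmetrichomgrowth}).

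It then remains to check that $\ell_\mu$ is a viscosity solution of $\overline{H}(p+D\ell_\mu,r)=\mu$ on $\mathbb{R}^n\setminus\{0\}$. For the subsolution property, if $\phi\in C^2$ and $\ell_\mu-\phi$ has a local maximum at $y_0\neq 0$, then $\ell_\mu(y)-\ell_\mu(y_0)\leq\phi(y)-\phi(y_0)$ near $y_0$; comparing with the subgradient inequality for the convex function $\ell_\mu$ forces $\ell_\mu$ to be differentiable at $y_0$ with $D\ell_\mu(y_0)=D\phi(y_0)$, and the gradient of a support function at a point of differentiability is the unique maximizing covector, which lies in $Z_\mu$, so that $\overline{H}(p+D\phi(y_0),r)\leq\mu$. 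For the supersolution property, if $\phi\in C^2$ and $\ell_\mu-\phi$ has a local minimum at $y_0\neq 0$, then testing $\ell_\mu(y)-\ell_\mu(y_0)\geq\phi(y)-\phi(y_0)$ along the ray $t\mapsto ty_0$ and using $\ell_\mu(ty_0)=t\ell_\mu(y_0)$ with $t\to 1^+$ and $t\to 1^-$ yields $D\phi(y_0)\cdot y_0=\ell_\mu(y_0)$; were $\overline{H}(p+D\phi(y_0),r)<\mu$, i.e.\ $D\phi(y_0)\in\Int Z_\mu$, then $D\phi(y_0)+\varepsilon y_0/|y_0|\in Z_\mu$ for all small $\varepsilon>0$, so that $\ell_\mu(y_0)\geq\bigl(D\phi(y_0)+\varepsilon y_0/|y_0|\bigr)\cdot y_0=\ell_\mu(y_0)+\varepsilon|y_0|$, a contradiction, and hence $\overline{H}(p+D\phi(y_0),r)\geq\mu$. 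Thus $\ell_\mu$ solves (\ref{colmetrichom}) and satisfies (\ref{colmetrichomgrowth}), so Proposition \ref{colmetricsol} gives $\overline{m}_\mu=\ell_\mu$, which is convex; this is exactly (\ref{colmetricformula}).

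I expect the only slightly delicate points to be the two test-function verifications above --- especially the observation that a smooth function touching the convex function $\ell_\mu$ from above makes it differentiable there with gradient equal to the maximizing covector --- but these are standard facts about support functions of compact convex bodies, and everything else is bookkeeping with the convexity, coercivity and uniqueness already established.
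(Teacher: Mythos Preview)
Your argument is correct. The paper itself omits this proof entirely, deferring to \cite{AS} with the remark that ``the arguments are scalar and identical to the analogous facts in \cite{AS},'' so there is no in-paper proof to compare against line by line.

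That said, your route is a clean variant of the standard one. The approach in \cite{AS} typically goes in the opposite direction: one first knows (from subadditivity of the pre-limit metric and Proposition \ref{colmetpositive}) that $\overline{m}_\mu$ is convex and positively one-homogeneous, hence equals the support function of $\partial\overline{m}_\mu(0)$; one then reads off from the equation $\overline{H}(p+D\overline{m}_\mu,r)=\mu$ at points of differentiability, together with the convexity and coercivity of $\overline{H}$, that $\partial\overline{m}_\mu(0)=Z_\mu$. You instead define the support function $\ell_\mu$ of $Z_\mu$ directly, verify it solves (\ref{colmetrichom}) with the growth bound (\ref{colmetrichomgrowth}), and invoke the uniqueness asserted in Proposition \ref{colmetricsol}. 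This buys you convexity of $\overline{m}_\mu$ for free from the formula rather than as a separate input, at the cost of relying on the (scalar) comparison principle underlying the uniqueness in Proposition \ref{colmetricsol}; both are legitimate and essentially equivalent.

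The two test-function checks you flag as ``slightly delicate'' are indeed routine: for the subsolution side, a smooth function touching a convex function from above forces differentiability there with common gradient, and the gradient of a support function at a point of differentiability is the unique maximizer in $Z_\mu$; for the supersolution side, your use of one-homogeneity along the ray through $y_0$ to get $D\phi(y_0)\cdot y_0=\ell_\mu(y_0)$, followed by the perturbation $D\phi(y_0)+\varepsilon y_0/|y_0|$, is exactly right and uses only that $\{\overline{H}(p+\cdot,r)<\mu\}=\Int Z_\mu$, which you justified via convexity and the strict inequality $\overline{H}(p,r)<\mu$.
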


We conclude this section with a characterization of $\overline{m}_\mu$ to be used in the sequel.

\begin{prop}\label{techlemma}  Assume (\ref{steady}).  Fix $(p,r)\in\mathbb{R}^n\times\mathbb{R}$ and $\mu>\overline{H}(p,r)$.  Suppose that $q_0\in\mathbb{R}^n$ satisfies $\overline{H}(p+q_0,r)=\mu$.  Then, there exists $x_0\in\mathbb{R}^n$ such that $\abs{x_0}=1$ and $\overline{m}_\mu(x_0)=x_0\cdot q_0$. \end{prop}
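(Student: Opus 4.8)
The plan is to read the formula (\ref{colmetricformula}) as saying that $\overline{m}_\mu$ is the support function of a compact convex set, and to obtain $x_0$ as an outer normal direction to that set at the boundary point $q_0$. Set
\[ K=\left\{\,q\in\mathbb{R}^n\;:\;\overline{H}(p+q,r)\leq\mu\,\right\},\]
so that by (\ref{colmetricformula}) one has $\overline{m}_\mu(y)=\sup_{q\in K}q\cdot y$ for every $y\in\mathbb{R}^n$. First I would record the relevant structure of $K$: it is convex as a sublevel set of the convex function $q\mapsto\overline{H}(p+q,r)$ from (\ref{effconvex}); it is closed by the continuity estimate (\ref{effcon}); and it is bounded, since the coercivity (\ref{effcoercive}) forces $\overline{H}(p+q,r)\to\infty$ as $\abs{q}\to\infty$. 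Thus $K$ is compact and convex, and in addition $0\in\Int(K)$ because $\overline{H}(p,r)<\mu$ and $\overline{H}$ is continuous.

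Next I would verify that $q_0\in\partial K$. That $q_0\in K$ is clear, since $\overline{H}(p+q_0,r)=\mu$. To see that $q_0$ is not an interior point, fix $s>1$; writing $q_0=\tfrac1s(sq_0)+(1-\tfrac1s)\,0$ and using the convexity of $q\mapsto\overline{H}(p+q,r)$ gives
\[ \mu=\overline{H}(p+q_0,r)\leq\tfrac1s\,\overline{H}(p+sq_0,r)+\left(1-\tfrac1s\right)\overline{H}(p,r),\]
so that $\overline{H}(p+sq_0,r)\geq\mu+(s-1)\bigl(\mu-\overline{H}(p,r)\bigr)>\mu$. Hence $sq_0\notin K$ for every $s>1$, so every neighbourhood of $q_0$ meets $\mathbb{R}^n\setminus K$, i.e. $q_0\in\partial K$.

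Finally, since $K$ is convex with $q_0\in\partial K$, I would invoke the supporting hyperplane theorem to produce a unit vector $x_0\in\mathbb{R}^n$ with $x_0\cdot q\leq x_0\cdot q_0$ for all $q\in K$. Then, using (\ref{colmetricformula}) and $q_0\in K$,
\[ x_0\cdot q_0\leq\sup_{q\in K}x_0\cdot q=\overline{m}_\mu(x_0)\leq x_0\cdot q_0,\]
whence $\overline{m}_\mu(x_0)=x_0\cdot q_0$, as desired. This is in essence a finite-dimensional convex-analysis argument, and I expect the only point needing care to be the verification that $q_0$ lies on $\partial K$ rather than in its interior — which is precisely where the convexity (\ref{effconvex}), together with the compactness of $K$ coming from (\ref{effcoercive}) and the strict inequality $\mu>\overline{H}(p,r)$, is used.
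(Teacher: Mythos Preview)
Your argument is correct. The paper omits the proof of this proposition, deferring to \cite{AS}; your convex-analysis argument---recognizing via (\ref{colmetricformula}) that $\overline{m}_\mu$ is the support function of the compact convex sublevel set $K=\{q:\overline{H}(p+q,r)\leq\mu\}$, checking that $q_0\in\partial K$ using (\ref{effconvex}) and $\mu>\overline{H}(p,r)$, and then applying the supporting hyperplane theorem---is precisely the standard route and is what one finds in \cite{AS}. One tiny remark: in your closing sentence you list compactness of $K$ among the ingredients for $q_0\in\partial K$, but your actual boundary verification uses only convexity and the strict inequality $\mu>\overline{H}(p,r)$ (which also gives $q_0\neq 0$, needed for $sq_0\to q_0$ as $s\downarrow 1$); compactness is used elsewhere, to make the support function finite.
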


\section{A Further Identification of $\overline{H}(p,r)$}

We prove here that, almost surely in $\Omega$, for each $(p,r)\in\mathbb{R}^n\times\mathbb{R}$ and $R>0$, \begin{equation}\label{enhance2} \lim_{\delta\rightarrow 0}\sup_{y\in B_{R/\delta}}\abs{\overline{H}(p,r)+\delta v^\delta_1(y,\omega)}=0.\end{equation}  This convergence is necessary to apply the perturbed test function method in the following section.

In the following proposition, we provide a characterization of the minimum, for each $r\in\mathbb{R},$ of the function \begin{equation*}\begin{array}{ll} p\rightarrow\overline{H}(p,r) & \textrm{on}\;\;\;\mathbb{R}^n.\end{array}\end{equation*}  To this end, we consider the collection of $v\in\Lip(\mathbb{R}^n,\mathbb{R}^m)$ satisfying, for fixed $\omega\in\Omega$ and $\mu\in\mathbb{R},$ the system \begin{equation}\label{colminsystem}\begin{array}{ll} -\tr(A_k(y,\omega)D^2v_k)+H_k(p+Dv_k,\hat{r},v_k-v_j,y,\omega)\leq\mu & \textrm{on}\;\;\;\mathbb{R}^n.\end{array}\end{equation}

\begin{prop}\label{colmin}  Assume (\ref{steady}).  Fix $r\in\mathbb{R}$.  If $p\in\mathbb{R}^n$ satisfies $\overline{H}(p,r)=\min_{q\in\mathbb{R}^n}\overline{H}(q,r)$ then, on a subset of full probability, \begin{equation}\label{minformula} \overline{H}(p,r)=\inf\left\{\;\mu\;|\;\textrm{There exists}\;\;v\in\Lip(\mathbb{R}^n;\mathbb{R}^m)\;\;\textrm{satisfying}\;\;(\ref{colminsystem})\;\right\}.\end{equation} \end{prop}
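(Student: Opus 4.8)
The plan is to prove the two inequalities in (\ref{minformula}) separately. For the bound ``$\le$'', I observe that Proposition \ref{subsolution}, applied at $(p,r)$, furnishes $w=(w_1,\dots,w_m)$ with $\norm{Dw_k}_{L^\infty(\mathbb{R}^n)}\le C$ that is a viscosity subsolution of (\ref{cellsub}); since (\ref{cellsub}) is precisely (\ref{colminsystem}) with $\mu=\overline{H}(p,r)$ and since $w_k\in\Lip(\mathbb{R}^n)$, this already shows that the infimum in (\ref{minformula}) is at most $\overline{H}(p,r)$. I note in passing that, for any $q\in\mathbb{R}^n$, the function $y\mapsto (q-p)\cdot y+w^q(y)$ with $w^q$ the subsolution of Proposition \ref{subsolution} at $(q,r)$ is a Lipschitz subsolution of (\ref{colminsystem}) at level $\overline{H}(q,r)$, because $p+Dv_k=q+Dw^q_k$ while the linear part cancels in each difference $v_k-v_j$; minimizing over $q$ and invoking $\overline{H}(p,r)=\min_q\overline{H}(q,r)$ gives the same bound, and this is the only place the minimality of $p$ is used for this direction.

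For the reverse inequality I would argue by contradiction: suppose $\mu<\overline{H}(p,r)$ and that $v\in\Lip(\mathbb{R}^n;\mathbb{R}^m)$ solves (\ref{colminsystem}). The coercivity (\ref{coercive}) forces $\abs{p+Dv_k}\le((\mu+C_3)/C_1)^{1/\gamma_k}$ and $\abs{v_k-v_i}\le(\mu+C_3)/C_2$ for all $k,i$. Hence the rescalings $v^\epsilon:=\epsilon\, v(\cdot/\epsilon)$ are uniformly Lipschitz, the differences $v^\epsilon_k-v^\epsilon_i$ tend to zero, and each $v^\epsilon$ is a subsolution of the $\epsilon$-rescaled macroscopic system (of the type appearing in (\ref{colmetricres})) at level $\mu$. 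By the Arzel\`a--Ascoli theorem, along a subsequence $v^\epsilon_k\to V$ locally uniformly, with $V\in\Lip(\mathbb{R}^n)$ scalar and $V(0)=0$.

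The crucial step is then to identify $V$ as a viscosity subsolution of the effective equation $\overline{H}(p+DV,r)\le\mu$ on $\mathbb{R}^n$. Granting this, Rademacher's theorem supplies a point $y_0$ of differentiability of $V$, at which $\overline{H}(p+DV(y_0),r)\le\mu$, whereas the minimality of $p$ gives $\overline{H}(p+DV(y_0),r)\ge\overline{H}(p,r)>\mu$, a contradiction. To carry out the identification I would use the homogenization of the metric system: for $\nu>\overline{H}(p,r)$ the rescaled metric solution $m^\epsilon_\nu$ of (\ref{colmetricres}) solves the rescaled cell-type equation at level $\nu$, and comparing $v^\epsilon$ (translated by a suitable constant so as to lie below the boundary data on $\partial D_\epsilon(0)$) with $m^\epsilon_\nu$ through Proposition \ref{colmetriccomp}, passing to the limit by Propositions \ref{colmetricsol} and \ref{colmetpositive}, and basing the comparison at arbitrary points, yields $V(y)-V(x)\le\overline{m}_\nu(y-x)$ for all $x,y$; combined with the formula (\ref{colmetricformula}) and the enhanced characterization of $\overline{H}$ from Proposition \ref{auxenhance}, this pins down the subsolution property of $V$ via the perturbed test function method.

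The main obstacle is precisely this identification at the level $\mu$ (rather than merely at the level $\overline{H}(p,r)$): a direct comparison of $v$ with the approximate macroscopic solutions $v^\delta$ of (\ref{cell}) is not available, since $v$ may grow linearly while $\delta v^\delta$ is controlled only in $L^\infty$, so $v^\delta$ may be strongly negative on $\partial B_{R/\delta}$ and no boundary ordering can be imposed; it is for this reason that one must route the argument through the metric system and the subadditive ergodic theorem. Once the subsolution property of $V$ is secured, the minimality of $p$ converts it into the bound $\overline{H}(p,r)\le\mu$ at once, completing the contradiction.
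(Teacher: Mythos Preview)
Your inequality ``$\le$'' is exactly the paper's: the subsolution $w$ of Proposition \ref{subsolution} witnesses $\overline{H}(p,r)$ in the infimum.

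For the reverse inequality, your route through the metric system has a genuine gap. Comparing $v^\epsilon$ with $m^\epsilon_\nu$ only makes sense for $\nu>\overline{H}(p,r)$ (this is where Proposition \ref{colmetriccomp} and the existence of $m_\nu$ live), and the resulting bound $V(y)-V(x)\le\overline{m}_\nu(y-x)$ together with (\ref{colmetricformula}) yields only $\overline{H}(p+DV,r)\le\nu$; letting $\nu\downarrow\overline{H}(p,r)$ gives $\overline{H}(p+DV,r)\le\overline{H}(p,r)$, which by minimality is a tautology and produces no contradiction with $\mu<\overline{H}(p,r)$. You flag exactly this (``identification at the level $\mu$ rather than merely at the level $\overline{H}(p,r)$'') but do not close it: the metric comparison cannot see levels below $\overline{H}(p,r)$, so it will never deliver the level-$\mu$ subsolution property you need.

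What does work, and what the paper does, is to bypass the metric comparison entirely and run the perturbed test function argument of Proposition \ref{colmetricsol} directly. The paper first uses your $v$ together with a hand-built supersolution (of the same shape as in Proposition \ref{colmetricexist}) to obtain via Perron's method a \emph{solution} $s$ of the system at level $\mu$ on $\mathbb{R}^n\setminus D_1(0)$ with $\tilde v\le s\le z$; rescaling and taking the half-relaxed upper limit $S$, the subsolution half of the perturbed test function method (which only needs Proposition \ref{auxenhance}, not Proposition \ref{colupgrade}, so there is no circularity) yields $\overline{H}(p+DS,r)\le\mu$ on $\mathbb{R}^n\setminus\{0\}$, and the minimality of $p$ finishes. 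In fact the subsolution half of that argument only uses that the rescaled objects are subsolutions, so your more direct plan of working with $v^\epsilon=\epsilon v(\cdot/\epsilon)$ and Arzel\`a--Ascoli could be made to work as well, \emph{provided} you drop the metric-comparison detour and argue by perturbed test functions against $\phi^\epsilon=\phi+\epsilon w^\epsilon(\cdot/\epsilon)$ at the point $(p+D\phi(x_0),r)$, exactly as in Proposition \ref{colmetricsol}. The paper's Perron step buys a clean two-sided control $\tilde v\le s\le z$ and aligns the proof with the existing template; your variant is a little leaner but, as written, the identification step is not there.
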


\begin{proof}  Let $\tilde{H}(p,r,\omega)$ denote the righthand side of (\ref{minformula}).  The stationarity of the coefficients imply that, for each $x\in\mathbb{R}^n$, $\tilde{H}(p,r,\tau_x\omega)=\tilde{H}(p,r,\omega)$.  It follows from (\ref{transgroup}) that $\tilde{H}(p,r,\omega)=\tilde{H}(p,r)$ is deterministic.

For every $\omega\in\Omega_1$, the subsolution $w$ constructed in Proposition \ref{subsolution} is Lipschitz continuous and a subsolution of (\ref{colminsystem}) corresponding to $\mu=\overline{H}(p,r)$.  Therefore $\tilde{H}(p,r)\leq \overline{H}(p,r)$.  We now obtain the opposite inequality.

Let $\mu>\tilde{H}(p,r)$ and $\omega\in\Omega_1$.  By definition there exists $v=(v_1,\ldots,v_m)\in \Lip(\mathbb{R}^n;\mathbb{R}^m)$ satisfying (\ref{colminsystem}) where, by standard properties of viscosity solutions, we may assume that, for each $k\in\left\{1,\ldots,m\right\}$, $Dv_k\in\Lip(\mathbb{R}^n;\mathbb{R}^n)$.

We follow the proof of Proposition \ref{colmetricexist}.  Define $\tilde{v}(y)=v(y)-v(0)$ and observe that there exists $a>0$ such that $$z(y)=\left\{\begin{array}{ll} \tilde{v}+a\abs{y} & \textrm{if}\;\;A_k=0\;\;\textrm{for all}\;\;1\leq k\leq m, \\ \tilde{v}+a(\abs{y}^{(\gamma-2)/(\gamma-1)}+\abs{y}) & \textrm{if}\;\;A_k\neq 0\;\;\textrm{for some}\;\;k\;\;\textrm{and}\;\;\gamma>2, \\ \tilde{v}+a(\abs{y}-1) & \textrm{if}\;\;A_k\neq 0\;\;\textrm{for some}\;\;k\;\;\textrm{and}\;\;\gamma\leq 2, \end{array} \right.$$ is a supersolution of (\ref{colminsystem}) on $\mathbb{R}^n\setminus D_1(0)$ satisfying $z=\tilde{v}$ on $\partial D_1(0)$.

Perron's method yields a solution $s=(s_1,\ldots,s_m)$ of the system \begin{equation}\label{colminmetricsol} \left\{\begin{array}{ll} -\tr(A_k(y,\omega)D^2s_k)+H_k(p+Ds_k,\hat{r},s_k-s_j,y,\omega)=\mu & \textrm{on}\;\;\;\mathbb{R}^n\setminus D_1(0), \\ s=\tilde{v} & \textrm{on}\;\;\;\partial D_1(0),\end{array} \right.\end{equation} satisfying $\tilde{v}\leq s\leq z$ on $\mathbb{R}^n\setminus D_1(0)$.   Moreover, by repeating the proof of Proposition \ref{cellsol}, we have $s\in\Lip(\mathbb{R}^n;\mathbb{R}^m)$.

Observe that $s^\epsilon(\cdot)=\epsilon s(\frac{\cdot}{\epsilon})$ satisfies the rescaled system \begin{equation*}\label{colminmetricsol} \left\{\begin{array}{ll} -\epsilon\tr(A_k(\frac{y}{\epsilon},\omega)D^2 s^\epsilon_k)+H_k(p+D s^\epsilon_k,\hat{r}, \frac{s^\epsilon_k-s^\epsilon_j}{\epsilon},\frac{y}{\epsilon},\omega)=\mu & \textrm{on}\;\;\;\mathbb{R}^n\setminus D_\epsilon(0), \\ s^\epsilon=\tilde{v}^\epsilon & \textrm{on}\;\;\;\partial D_\epsilon(0),\end{array} \right.\end{equation*} with $\tilde{v}^\epsilon(\cdot)=\epsilon \tilde{v}(\frac{\cdot}{\epsilon})$.  And, by repeating the proof of Proposition \ref{cellsol} with $\epsilon^{-1}(s^\epsilon_k-s^\epsilon_j)$ playing the role of $(v^\delta_k-v^\delta_j)$, for each $R>0$ there exists $C=C(R)>0$ satisfying $$\max_{1\leq j,k\leq m}\norm{s^\epsilon_k-s^\epsilon_j}_{L^\infty(B_R)}\leq \epsilon C.$$

Define $$S(x)=\limsup_{\epsilon\rightarrow 0}\max_{1\leq k\leq m}\left\{\;s_k^\epsilon(y)\;|\;\abs{y-x}<\epsilon\;\right\}$$ and observe that $S\in \Lip(\mathbb{R}^n)$.  We remark that the condition $\mu>\overline{H}(p,r)$ in Proposition \ref{colmetricsol} is only used to ensure the existence of the $m^\epsilon_\mu$'s.  Therefore, since we have the $s^\epsilon$'s a priori and since $\omega\in\Omega_1$, by repeating the proof of Proposition \ref{colmetricsol} and using standard optimization results (See \cite{CIL}), we conclude that $S$ satisfies the effective equation $$\left\{\begin{array}{ll} \overline{H}(p+DS,r)\leq\mu & \textrm{on}\;\;\;\mathbb{R}^n\setminus\left\{0\right\}, \\ S(0)=0, & \end{array}\right.$$  and, therefore, that $\overline{H}(p,r)=\min_{q\in\mathbb{R}^n}\overline{H}(q,r)\leq \mu$ for each $\mu>\tilde{H}(p,r)$. \end{proof}

The homogenization of the metric problem, Proposition \ref{colmetricsol}, and the characterization of $\min_{p}\overline{H}(p_,r)$ are now used to prove (\ref{enhance2}).  We follow closely the methods of \cite{AS} and use Proposition \ref{techlemma} and $\overline{m}_\mu$ to construct, in the proof's final step, a supersolution of (\ref{colmetricres}) corresponding to $\overline{H}(p,r)$.

\begin{prop}\label{colupgrade}  Assume (\ref{steady}).  There exists a subset $\Omega_3\subset\Omega_2$ of full probability such that, for each $R>0$, $(p,r)\in\mathbb{R}^n\times\mathbb{R}$, and $\omega\in\Omega_3$, $$\lim_{\delta\rightarrow 0}\sup_{y\in B_{R/\delta}}\abs{\delta v ^\delta_1(y,\omega)+\overline{H}(p,r)}=0.$$\end{prop}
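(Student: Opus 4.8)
The statement is the almost-sure sharpening of Proposition \ref{auxenhance}, and the plan is to follow the scheme of \cite{AS}. Write $v^\delta$ for the solution of (\ref{cell}) associated with $(p,r)$. It suffices to prove, on a set of full probability, the two one-sided bounds
\begin{equation*}
\liminf_{\delta\to 0}\inf_{y\in B_{R/\delta}}\bigl(\delta v^\delta_1(y,\omega)+\overline{H}(p,r)\bigr)\geq 0
\quad\text{and}\quad
\limsup_{\delta\to 0}\sup_{y\in B_{R/\delta}}\bigl(\delta v^\delta_1(y,\omega)+\overline{H}(p,r)\bigr)\leq 0 .
\end{equation*}
I would establish both for each rational $(p,r)$ on a full-probability subset of $\Omega_2$, set $\Omega_3$ equal to the intersection of these subsets over $(p,r)\in\mathbb{Q}^n\times\mathbb{Q}$, and then pass to arbitrary $(p,r)$ using the uniform-in-$y$ continuity estimate of Proposition \ref{contdep} together with (\ref{effcon}) (and Proposition \ref{metcontdep} for the metric ingredients).

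The first bound is the easy half and uses only the sublinear subsolution $w$ of (\ref{cellsub}) furnished by Proposition \ref{subsolution}, which is defined on $\Omega_1\supset\Omega_2$. Exactly as in the proof of Proposition \ref{auxenhance}, for $\eta>0$ the vector $z^\delta$ with $z^\delta_k=w_k-\delta^{-1}(\overline{H}(p,r)+\eta)+\frac{\eta}{4C}\varphi$ and $\varphi(y)=-(1+\abs{y}^2)^{1/2}+1$ is a global subsolution of (\ref{cell}) once $\delta$ is small, so $z^\delta\leq v^\delta$ on $\mathbb{R}^n$ by comparison. Restricting to $B_{R/\delta}$, using the sublinearity bound $\abs{w_k(y,\omega)}\leq\epsilon'\abs{y}+C_{\epsilon'}$ and $\abs{\delta\varphi(y)}\leq\delta\abs{y}\leq R$, one gets $\delta v^\delta_k(y,\omega)\geq-\overline{H}(p,r)-\eta-\frac{\eta R}{4C}-\epsilon'R-o(1)$ uniformly over $B_{R/\delta}$ and all $k$; sending $\eta\downarrow0$ and then $\epsilon'\downarrow0$ gives the first bound on $\Omega_1$.

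The second bound is the substantive part, and it is where the metric material of Sections 6--8 is needed. The plan is to fix $\mu>\overline{H}(p,r)$ and compare $v^\delta$ against a supersolution of the rescaled metric system (\ref{colmetricres}) at the critical level $\overline{H}(p,r)$: after passing from (\ref{cell}) to the system solved by $V^\delta_k(y):=\delta v^\delta_k(y/\delta,\omega)$ --- which is static, carries a $+V^\delta_k$ term, and has $H_k$ evaluated at $p+DV^\delta_k$ --- the first bound shows $V^\delta$ is, up to an $o(1)$ and sublinear-in-$y$ correction, a subsolution of (\ref{colmetricres}) below level $\mu$, and its linear growth is controlled by (\ref{gradbound}), so the comparison principle of Proposition \ref{colmetriccomp} applies once the supersolution is available and its values on $\partial D_\delta(0)$ are matched with $V^\delta$ near the origin via Propositions \ref{cellsol} and \ref{cellnormsol}. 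The crux --- and the step I expect to be hardest --- is the construction of that supersolution: the naive choice $c^\delta+\delta m_\mu(\cdot/\delta,0,\omega)$, which by Proposition \ref{colmetricsol} converges to $c_\infty+\overline{m}_\mu$, only yields a bound carrying an $O(R)$ error coming from the one-homogeneity (hence linear growth) of $\overline{m}_\mu$ recorded in Proposition \ref{colmetpositive} and (\ref{colmetricformula}), so the comparison function must instead be assembled slope by slope, invoking Proposition \ref{techlemma} --- which provides, for each boundary slope $q_0$ with $\overline{H}(p+q_0,r)=\mu$, a unit vector $x_0$ with $\overline{m}_\mu(x_0)=x_0\cdot q_0$ --- so that the comparison sees the correct affine profile $q_0\cdot y$ in each direction and the residual error is driven to $0$ as $\mu\downarrow\overline{H}(p,r)$. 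The borderline case $\overline{H}(p,r)=\min_q\overline{H}(q,r)$, where the relevant sublevel set of $\overline{H}(\cdot,r)$ degenerates, is treated separately via the variational characterization of $\min_q\overline{H}(q,r)$ in Proposition \ref{colmin}. Once both bounds hold on $\Omega_3$ the stated convergence follows; the only genuinely delicate point is the metric supersolution construction, the remainder being comparison arguments and bookkeeping of the nested full-probability sets.
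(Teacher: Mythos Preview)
Your first bound is fine, and in fact slightly cleaner than the paper's treatment of that half: the direct comparison with the sublinear subsolution $w$ from Proposition \ref{subsolution} yields $\limsup_{\delta\to0}\sup_{y\in B_{R/\delta}}(-\delta v^\delta_1(y,\omega))\le\overline{H}(p,r)$ on all of $\Omega_1$ without any probabilistic averaging, whereas the paper reaches the same inequality via an Egoroff/ergodic argument applied to Proposition \ref{auxenhance}.

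The second bound, however, has a genuine gap. Your plan is to regard $V^\delta_k(y)=\delta v^\delta_k(y/\delta,\omega)$ as a subsolution of the rescaled metric system at level $\mu>\overline{H}(p,r)$ and bound it from above by a global supersolution built from $m^\delta_\mu$. As you yourself note, the naive comparison only gives $V^\delta(y)\le c+\overline{m}_\mu(y)+o(1)$ with $\overline{m}_\mu$ growing linearly, and you propose to repair this by assembling the supersolution ``slope by slope'' and sending $\mu\downarrow\overline{H}(p,r)$. But Proposition \ref{techlemma} furnishes, for a \emph{given} $q_0$ on the boundary of the sublevel set, a \emph{single} unit direction $x_0$ with $\overline{m}_\mu(x_0)=q_0\cdot x_0$; it does not manufacture a global supersolution with vanishing slope, and taking $\mu\downarrow\overline{H}(p,r)$ does not force $\overline{m}_\mu\to0$ (the limiting sublevel set $\{q:\overline{H}(p+q,r)\le\overline{H}(p,r)\}$ need not reduce to $\{0\}$). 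No concrete construction is given, and I do not see how to make one along these lines.

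The paper's route is different in two essential respects. First, it uses Proposition \ref{auxenhance} together with Egoroff's theorem and the ergodic theorem (Proposition \ref{ergodic1}) to reduce the uniform statement on $B_{R/\delta}$ to the \emph{pointwise} identity $\hat H(p,r):=\liminf_{\delta\to0}(-\delta v^\delta_1(0,\omega))=\overline{H}(p,r)$. This reduction is not cosmetic: it allows one to fix a bad subsequence $\delta_j\to0$ along which $-\delta_j v^{\delta_j}_1(0,\omega)\to\hat H(p,r)<\overline{H}(p,r)$ and argue by contradiction. Second, in the nondegenerate case $\overline{H}(p,r)>\min_q\overline{H}(q,r)$ the contradiction comes from a \emph{local} comparison on a small ball $B_r(x_0)$, not a global one: one takes the metric problem at the minimizer (normalized to $(0,r)$) and level $\mu=\overline{H}(p,r)$, invokes Proposition \ref{techlemma} with $q_0=p$ to get a single $x_0$ with $\overline{m}_\mu(x_0)=p\cdot x_0$, and compares the strict subsolution $z^j_k(x)=p\cdot x+\delta_j v^{\delta_j}_k(x/\delta_j,\omega)-\eta|x-x_0|^2$ against $m^{\delta_j}_\mu$ near $x_0$. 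The borderline case is handled, as you correctly anticipated, via Proposition \ref{colmin}. So you have identified all the ingredients; what is missing is the Egoroff/ergodic reduction to the origin and the replacement of the unworkable global supersolution by a local contradiction at the special direction $x_0$.
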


\begin{proof}  Fix $(p,r)\in\mathbb{R}^n\times\mathbb{R}$.  It is necessary to show that there exists $\Omega_3(p,r)\subset\Omega$ of full probability satisfying, for each $\omega\in\Omega_3(p,r)$ and $R>0$, \begin{equation*}\label{zeroconverge}\limsup_{\delta\rightarrow 0}(\sup_{y\in B_{R/\delta}}-\delta v^\delta_1(y,\omega))=\overline{H}(p,r)=\liminf_{\delta\rightarrow 0}(\inf_{y\in B_{R/\delta}}-\delta v^\delta_1(y,\omega)).\end{equation*}

Let $\Omega_3'(p,r)\subset\Omega$ be the subset of full probability satisfying, for every $\omega\in\Omega_3'(p,r)$, $$\begin{array}{lll}\hat{H}(p,r):=\liminf_{\delta\rightarrow 0}(-\delta v^\delta_1(0,\omega)) & \textrm{and} & \overline{H}(p,r):=\limsup_{\delta\rightarrow 0}(-\delta v^\delta_1(0,\omega)).\end{array}$$  We first prove that there exists $\tilde{\Omega}_3(p,r)\subset\Omega$ of full probability satisfying, for every $\omega\in\tilde{\Omega}_3(p,r)$ and $R>0$, \begin{equation}\label{zeroconverge1} \limsup_{\delta\rightarrow 0}(\sup_{y\in B_{R/\delta}}-\delta v^\delta_1(y,\omega))=\limsup_{\delta\rightarrow 0}(-\delta v^\delta_1(0,\omega))=\overline{H}(p,r)\end{equation} and, \begin{equation}\label{zeroconverge11} \liminf_{\delta\rightarrow 0}(\inf_{y\in B_{R/\delta}}-\delta v^\delta_1(y,\omega))=\liminf_{\delta\rightarrow 0}(-\delta v^\delta_1(0,\omega))=\hat{H}(p,r).\end{equation}

Using Egoroff's theorem and Proposition \ref{auxenhance}, for each $\rho>0$ there exists $\overline{\delta}(\rho)>0$ and a subset $E_\rho$ satisfying $\mathbb{P}(E_\rho)>1-\rho$ with \begin{equation}\begin{array}{ll}\label{conenh1} \sup_{\omega\in E_\rho} \left(-\delta v^\delta_1(0,\omega)-\overline{H}(p,r)\right)<\rho & \textrm{for all}\;\;\; 0<\delta<\overline{\delta}(\rho),\end{array}\end{equation} and using the ergodic theorem (See Proposition \ref{ergodic1}), for each $\rho>0$ there exists $F_\rho\subset\Omega$ of full probability satisfying, for each $\omega\in F_\rho$, \begin{equation}\label{colupgradeergodic} \lim_{R\rightarrow\infty}\dashint_{B_R}1_{E_\rho}(\tau_y\omega)dy=\mathbb{P}(E_\rho)\geq(1-\rho).\end{equation}

Define $F_0=\bigcap_{j=1}^\infty F_{2^{-j}}$ and fix $\omega\in F_0$, $R>0$ and $\rho=2^{-j}$ for some $j\in\mathbb{N}$.  Using (\ref{colupgradeergodic}), there exists $\overline{\delta}=\overline{\delta}(R,\rho,\omega)>0$ such that, for all $0<\delta<\overline{\delta}$, \begin{equation}\label{colupgrademeasure}\abs{\left\{\;y\in B_{R/\delta}\;|\;\tau_y\omega\in E_\rho\;\right\}}\geq(1-2\rho)\abs{B_{R/\delta}}.\end{equation}  This implies that for each $z\in B_{R/\delta}$ there exists $y\in B_{R/\delta}$ satisfying $\abs{z-y}\leq C\rho^{\frac{1}{n}} R\delta^{-1}$ and $\tau_y\omega\in E_\rho$.

Therefore, using Proposition \ref{cellsol} and (\ref{conenh1}), for each $z\in B_{R/\delta}$, for $y\in B_{R/\delta}$ as above, \begin{multline*} -\delta v^\delta_1(z,\omega)-\overline{H}(p,r)\leq \abs{\delta v^\delta_1(y,\omega)-\delta v^\delta_1(z,\omega)}-\delta v^\delta_1(y,\omega)-\overline{H}(p,r) \\ \leq C\rho^\frac{1}{n} R -\delta v^\delta_1(0,\tau_y\omega)-\overline{H}(p,r)\leq C\rho^\frac{1}{n} R+\rho. \end{multline*}  Since $\rho>0$ was arbitrary, for each $\omega\in F_0$ and $R>0$, $$\limsup_{\delta\rightarrow 0}(\sup_{y\in B_{R/\delta}}-\delta v^\delta_1(y,\omega))\leq \overline{H}(p,r).$$

We conclude that, for each $\omega\in F_0\cap\Omega'_3(p,r)$ and $R>0$, $$\limsup_{\delta\rightarrow 0}(\sup_{y\in B_{R/\delta}}-\delta v^\delta_1(y,\omega))=\limsup_{\delta\rightarrow 0}-\delta v^\delta_1(0,\omega)=\overline{H}(p,r).$$  An analogous argument proves that there exists a subset $\tilde{F}_0$ of full probability such that, for each $\omega\in\tilde{F}_0\cap\Omega'_3(p,r)$ and $R>0$, $$\liminf_{\delta\rightarrow 0}(\inf_{y\in B_{R/\delta}}-\delta v^\delta_1(y,\omega))=\liminf_{\delta\rightarrow 0}-\delta v^\delta_1(0,\omega)=\hat{H}(p,r).$$  Define $\tilde{\Omega}_3(p,r)=F_0\cap\tilde{F}_0\cap\Omega'_3(p,r)$ to conclude that (\ref{zeroconverge1}) and (\ref{zeroconverge11}) hold for every $\omega\in\tilde{\Omega}_3(p,r)$ and $R>0$.

To conclude, we show that there exists a subset $\Omega_3(p,r)\subset\tilde{\Omega}_3(p,r)\cap\Omega_2$ of full probability satisfying, for every $\omega\in\Omega_3(p,r)$, $$\hat{H}(p,r)=\liminf_{\delta\rightarrow 0}(-\delta v^\delta_1(0,\omega))=\limsup_{\delta\rightarrow 0}(-\delta v^\delta_1(0,\omega))=\overline{H}(p,r)$$ by considering two cases:  the case $\overline{H}(p,r)=\min_{q\in\mathbb{R}^n}\overline{H}(q,r)$ and the case $\overline{H}(p,r)>\min_{q\in\mathbb{R}^n}\overline{H}(q,r).$

In the first case, suppose that $\overline{H}(p,r)=\min_{q\in\mathbb{R}^n}\overline{H}(q,r)$.  Let $\Omega_3''(p,r)$ denote the subset of full probability satisfying (\ref{minformula}) and define $\Omega_3(p,r)=\Omega_3''(p,r)\cap\tilde{\Omega}_3(p,r)\cap\Omega_2$.

Fix $\omega\in\Omega_3(p,r)$ and choose a sequence $\delta_j=\delta_j(\omega)\rightarrow 0$ satisfying $$\lim_{j\rightarrow\infty}-\delta_j v^{\delta_j}_1(0,\omega)=\hat{H}(p,r).$$

In view of Proposition \ref{cellnormsol} and standard properties of viscosity solutions, after passing to a further subsequence $\delta_{j_k}=\delta_{j_k}(\omega)\rightarrow 0$, as $k\rightarrow\infty$, $$\begin{array}{ll} w^{\delta_{j_k}}\rightarrow w & \textrm{locally uniformly on}\;\;\; \mathbb{R}^n, \end{array}$$ for $w\in \Lip(\mathbb{R}^n;\mathbb{R}^m)$ satisfying \begin{equation*}\begin{array}{ll} -\tr(A_k(y,\omega)D^2w_k)+H_k(p+Dw_k,\hat{r},w_k-w_j,y,\omega)=\hat{H}(p,r) & \textrm{on}\;\;\;\mathbb{R}^n. \end{array}\end{equation*} Using Proposition \ref{colmin}, this implies that $\hat{H}(p,r)\geq\overline{H}(p,r)$ and, hence, by definition that $\hat{H}(p,r)=\overline{H}(p,r)$.

In the second case, suppose that $\overline{H}(p,r)>\min_{q\in\mathbb{R}^n}\overline{H}(q,r)$.  Define $\Omega_3(p,r)=\tilde{\Omega}_3(p,r)\cap\Omega_2$ and fix $\omega\in\Omega_3(p,r)$.  Using (\ref{effcoercive}), we assume without loss of generality that \begin{equation*}\label{minzero} \overline{H}(0,r)=\min_{q\in\mathbb{R}^n}\overline{H}(q,r).\end{equation*}  We proceed by contradiction.  Assume \begin{equation*}\label{rho} \rho:=\overline{H}(p,r)-\hat{H}(p,r)>0\end{equation*} and choose a sequence $\delta_j=\delta_j(\omega)\rightarrow 0$ satisfying $$\lim_{j\rightarrow\infty}-\delta v^\delta_1(0,\omega)=\hat{H}(p,r)=\overline{H}(p,r)-\rho.$$

Let $m^\epsilon_\mu$ be the solution of (\ref{colmetricres}) corresponding to $(0,r)$ and $\mu=\overline{H}(p,r)$.  Since $\overline{H}(p,r)>\overline{H}(0,r)$, Proposition \ref{techlemma} implies that there exists $\abs{x_0}=1$ satisfying $$\overline{m}_\mu(x_0)=x_0\cdot p.$$

Define for $\eta>0$, $v^\delta$ the solution of (\ref{cell}) corresponding to $(p,r)$ and $x_0$ as above, \begin{equation*}\label{minzero2}\begin{array}{ll} z^j=(z^j_1,\ldots,z^j_m) & \textrm{with}\;\;\;z^j_k(x)=x\cdot p+\delta_jv^{\delta_j}_k(\frac{x}{\delta_j},\omega)-\eta\abs{x-x_0}^2.\end{array}\end{equation*}  Since $\omega \in\Omega_3(p,r)$, for every $0<r<1$ and $\eta>0$ sufficiently small and $j$ sufficiently large, $z^j$ is a subsolution of the system \begin{equation*}\label{minzero3}\begin{array}{ll} -\delta_j\tr(A_k(\frac{y}{\delta_j},\omega)D^2z^j_k)+H(Dz^j_k,\hat{r},\frac{z^j_k-z^j_i}{\delta_j},\frac{y}{\delta_j},\omega)\leq \overline{H}(p,r)-\frac{\rho}{2} & \textrm{on}\;\;\;B_r(x_0).\end{array}\end{equation*}

The comparison principle implies, for each $j$ sufficiently large, \begin{equation}\label{minzero4} \max_{1\leq k\leq m}\max_{x\in \overline{B}_r(x_0)}(z^j_k(x)-m^{\delta_j}_{k,\mu}(x,0,\omega))=\max_{1\leq k\leq m}\max_{x\in\partial \overline{B}_r(x_0)}(z^j_k(x)-m^{\delta_j}_{k,\mu}(x,0,\omega)).\end{equation}

We write, for each $j\in\mathbb{N}$ and $k\in\left\{1,\ldots,m\right\}$, \begin{equation}\label{minzero5} z^j_k(x)-m^{\delta_j}_{k,\mu}(x,0,\omega)=(\delta_j v^{\delta_j}_k(\frac{x}{\delta_j},\omega)-\eta\abs{x-x_0}^2)+ (p\cdot x-\overline{m}_\mu(x))+ (\overline{m}_\mu(x)-m^{\delta_j}_{k,\mu}(x,0,\omega)).\end{equation}

Using Proposition \ref{sublimit}, since $\omega\in\Omega_3(p,r)$, as $j\rightarrow\infty$, for each $k\in\left\{1,\ldots,m\right\}$, \begin{equation}\label{trio1}\begin{array}{ll} \overline{m}_\mu(x)-m^{\delta_j}_{k,\mu}(x,0,\omega)\rightarrow 0 & \textrm{locally uniformly on}\;\;\;B_r(x_0),\end{array}\end{equation}  using formula (\ref{colmetricformula}), \begin{equation}\label{trio2}\begin{array}{ll} p\cdot x-\overline{m}_\mu(x)\leq 0 & \textrm{and vanishes at}\;\;\; x=x_0.\end{array}\end{equation}  And, since $\omega\in\Omega_3(p,r)\subset\Omega_1$ and by our choice of $\delta_j\rightarrow 0$, for each $k\in\left\{1,\ldots,m\right\}$, $$\lim_{j\rightarrow\infty}(\sup_{x\in\partial \overline{B}_r(x_0)} \delta_j v^{\delta_j}_k(\frac{x}{\delta_j},\omega))=\lim_{j\rightarrow\infty}(\sup_{x\in\partial \overline{B}_R(x_0)} \delta_jw^{\delta_j}_k(\frac{x}{\delta_j},\omega)+\delta_jv^{\delta_j}_1(0,\omega))=-\hat{H}(p,r)$$ and $$\lim_{j\rightarrow\infty}\delta_j v^{\delta_j}_k(\frac{x_0}{\delta_j},\omega)=\lim_{j\rightarrow\infty}\delta_jw^{\delta_j}_k(\frac{x_0}{\delta_j},\omega)+\delta_jv^{\delta_j}_1(0,\omega)=-\hat{H}(p,r).$$  Therefore, \begin{equation}\label{trio3}\limsup_{j\rightarrow\infty}(\sup_{x\in \partial \overline{B}_r(x_0)}\delta_j v^{\delta_j}_k(\frac{x}{\delta_j},\omega)-\eta\abs{x-x_0}^2)=-\hat{H}(p,r)-\eta r^2<-\hat{H}(p,r)=\lim_{j\rightarrow\infty}\delta_j v^{\delta_j}_k(\frac{x_0}{\delta_j},\omega).\end{equation}  It follows from (\ref{minzero5}), (\ref{trio1}), (\ref{trio2}) and (\ref{trio3}) that (\ref{minzero4}) is impossible for large $j$ and, therefore, that $\overline{H}(p,r)=\hat{H}(p,r)$.

We conclude by defining $\Omega_3=\bigcap_{(p,r)\in\mathbb{Q}^n\times\mathbb{Q}}\Omega_3(p,r)$ and applying Proposition \ref{contdep} and (\ref{effcon}).\end{proof}

\section{The Proof of Homogenization and Collapse}

We recall the system \begin{equation}\label{colend} \left\{\begin{array}{ll} u^\epsilon_{k,t}-\epsilon\tr(A_k(\frac{x}{\epsilon},\omega)D^2u^\epsilon_k)+H_k(Du^\epsilon_k,u^\epsilon,\frac{u^\epsilon_k-u^\epsilon_j}{\epsilon},\frac{x}{\epsilon},\omega)=0 & \textrm{on}\;\;\mathbb{R}^n\times(0,\infty),\\ u^\epsilon=u_0 & \textrm{on}\;\;\mathbb{R}^n\times\left\{0\right\},\end{array}\right.\end{equation} for $u_0=(u_{1,0},\ldots,u_{m,0})\in\BUC(\mathbb{R}^n;\mathbb{R}^m)$.   The aim of this section is to prove the homogenization and collapse of (\ref{colend}) to the deterministic scalar equation \begin{equation}\label{eqend}\left\{\begin{array}{ll} \overline{u}_t+\overline{H}(D\overline{u},\overline{u})=0 & \textrm{on}\;\;\;\mathbb{R}^n\times(0,\infty), \\ \overline{u}=\underline{u}_0 & \textrm{on}\;\;\;\mathbb{R}^n\times\left\{0\right\}, \end{array}\right.\end{equation} with $\underline{u}_0$ the point-wise minimum \begin{equation*}\label{inend} \underline{u}_0=\min_{1\leq k\leq m}u_{k,0}.\end{equation*}

The following contraction properties of (\ref{colend}) and (\ref{eqend}) will be used frequently throughout this section.  The proofs follow from elementary methods in the theory of viscosity solutions and are therefore omitted (See \cite{CIL,IK}).

\begin{prop} Assume (\ref{steady}).  Fix $\epsilon>0$.  If $u^\epsilon$ and $v^\epsilon$ are solutions of (\ref{colend}) with initial conditions $u_0$ and $v_0$ respectively, then \begin{equation}\label{contraction}  \max_{1\leq k\leq m}\norm{u^\epsilon_k-v^\epsilon_k}_{L^\infty(\mathbb{R}^n\times[0,\infty))}\leq\max_{1\leq k\leq m}\norm{u_{k,0}-v_{k,0}}_{L^\infty(\mathbb{R}^n)}.\end{equation}

If $u$ and $v$ are solutions of (\ref{eqend}) with initial conditions $u_0$ and $v_0$ respectively, then \begin{equation}\label{homcontraction}\norm{u-v}_{L^\infty(\mathbb{R}^n\times[0,\infty))}\leq\norm{u_0-v_0}_{L^\infty(\mathbb{R}^n)}.\end{equation}\end{prop}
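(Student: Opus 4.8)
The plan is to obtain both bounds from the comparison principle: (\ref{homcontraction}) follows by a one-line perturbation, and (\ref{contraction}) by a doubling-of-variables argument in which the monotonicity hypotheses (\ref{hamincrease}), (\ref{monotone}) and (\ref{colmonstrict}) are used to compare the zeroth-order arguments of the $H_k$ at the doubled maximum point.

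For (\ref{homcontraction}), set $M=\norm{u_0-v_0}_{L^\infty(\mathbb{R}^n)}$ and $\tilde v=v+M$. Since $\overline H$ is independent of the spatial variable and nondecreasing in $r$ by (\ref{effmonotone}), one has $\tilde v_t+\overline H(D\tilde v,\tilde v)=v_t+\overline H(Dv,v+M)\ge v_t+\overline H(Dv,v)=0$, so $\tilde v$ is a supersolution of (\ref{eqend}) with $\tilde v(\cdot,0)=v_0+M\ge u_0$. The comparison principle for (\ref{eff}) recorded in Proposition \ref{effsol} (available thanks to (\ref{effcon}), (\ref{effcoercive}) and (\ref{effmonotone})) gives $u\le v+M$ on $\mathbb{R}^n\times[0,\infty)$, and exchanging $u$ and $v$ finishes the proof.

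For (\ref{contraction}), fix $T>0$, put $M=\max_{k}\norm{u_{k,0}-v_{k,0}}_{L^\infty(\mathbb{R}^n)}$, and suppose toward a contradiction that $\max_k\sup_{\mathbb{R}^n\times[0,T]}(u^\epsilon_k-v^\epsilon_k)>M$. For small $\alpha,\beta,\eta>0$ I would maximize over $k\in\indx$ and $(x,y,t)\in\mathbb{R}^n\times\mathbb{R}^n\times[0,T]$ the function
$$\Phi_k(x,y,t)=u^\epsilon_k(x,t)-v^\epsilon_k(y,t)-\frac{\abs{x-y}^2}{2\alpha}-\eta t-\beta\big((1+\abs{x}^2)^{1/2}+(1+\abs{y}^2)^{1/2}\big),$$
whose maximum is attained at some $(\bar k,\bar x,\bar y,\bar t)$ because $u^\epsilon,v^\epsilon\in\BUC(\mathbb{R}^n\times[0,T])$; the bound $u_{k,0}-v_{k,0}\le M$ and the uniform continuity of the data rule out $\bar t=0$ once $\alpha$ is small, so $\bar t>0$. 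One then applies the theorem on sums to $u^\epsilon_{\bar k}$ and $v^\epsilon_{\bar k}$ at $(\bar x,\bar t)$ and $(\bar y,\bar t)$, controls the difference of the second-order terms by the ordering of the produced Hessians together with the Lipschitz square roots (\ref{matsquare})--(\ref{lipsigma}), and controls the difference of the gradients and of the fast variables $y/\epsilon$ inside $H_{\bar k}$ by (\ref{hamcon}). Writing $\delta_j=u^\epsilon_j(\bar x,\bar t)-v^\epsilon_j(\bar y,\bar t)$, maximality of $\Phi_k$ in $k$ forces $\delta_j\le\delta_{\bar k}$ for every $j\in\indx$, while $\delta_{\bar k}>M\ge0$. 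Hence the coupling vector $\epsilon^{-1}(\,\cdot\,_{\bar k}-\,\cdot\,_j)_{j}$ is pointwise larger for $u^\epsilon$ than for $v^\epsilon$, so (\ref{hamincrease}) disposes of the $s$-argument; and for the $r$-argument I would interpolate through $q=v^\epsilon(\bar y)+(\delta_{\bar k},\ldots,\delta_{\bar k})$, using (\ref{colmonstrict}) for the uniform positive shift from $v^\epsilon(\bar y)$ to $q$ and (\ref{monotone}) for the step from $q$ to $u^\epsilon(\bar x)$, since then the $\bar k$-th components coincide and $u^\epsilon(\bar x)\le q$. Feeding these inequalities into the two viscosity inequalities and sending $\alpha,\beta,\eta\to0$ contradicts $\delta_{\bar k}>M$; reversing the roles of $u^\epsilon$ and $v^\epsilon$ then yields (\ref{contraction}).

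The routine apparatus here — the Jensen--Ishii lemma, the bookkeeping of the penalization at infinity, the diffusion estimate — is exactly as in \cite{CIL,IK}. The only point requiring the specific structure of the system is the chain of zeroth-order comparisons at the doubled maximum, where one must order the $\delta_j$ correctly and combine (\ref{hamincrease}), (\ref{colmonstrict}) and (\ref{monotone}) so that both the coupling terms $\epsilon^{-1}(u^\epsilon_k-u^\epsilon_j)$ and the vector argument $u^\epsilon$ of $H_{\bar k}$ are dominated by the corresponding quantities for $v^\epsilon$; that is the main obstacle, and it is precisely what hypotheses (\ref{monotone}) and (\ref{colmonstrict}) are designed to overcome.
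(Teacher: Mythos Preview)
The paper does not actually give a proof of this proposition: immediately before the statement it says ``The proofs follow from elementary methods in the theory of viscosity solutions and are therefore omitted (See \cite{CIL,IK}).'' Your proposal correctly supplies those omitted details --- the one-line shift $\tilde v=v+M$ using (\ref{effmonotone}) for (\ref{homcontraction}), and the standard doubling-of-variables argument for (\ref{contraction}) --- and your interpolation through $q=v^\epsilon(\bar y)+\widehat{\delta_{\bar k}}$ to chain (\ref{colmonstrict}) with (\ref{monotone}) is exactly the structural point needed for monotone systems, as in \cite{IK}. So your approach is correct and is precisely the standard argument the paper is pointing to.
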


We begin by characterizing the solutions of (\ref{colend}).  The proof of the following proposition follows from standard properties of viscosity solutions and is therefore omitted.

\begin{prop}\label{uniformbound}  Assume (\ref{steady}).  System (\ref{colend}) admits, for each  $\epsilon>0$ and $T>0$, a unique solution $u^\epsilon\in \BUC(\mathbb{R}^n\times[0,T);\mathbb{R}^m)$ such that, for each $T>0$, there exists $C=C(\norm{u_0}_{L^\infty},T)>0$ satisfying \begin{equation*}\label{unibound} \max_{1\leq k\leq m}\norm{u^\epsilon_k}_{L^\infty(\mathbb{R}^n\times[0,T])}\leq C.\end{equation*}\end{prop}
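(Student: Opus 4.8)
The plan is to obtain existence, uniqueness and the bound simultaneously from the comparison principle for monotone systems together with Perron's method, using spatially constant barriers. Because these barriers do not depend on $x$, they do not feel the oscillation $x/\epsilon$, and the $\epsilon$-independence of the estimate will be a consequence of the fact that the bound (\ref{bounded}) on the $H_k$'s is uniform in the fast variable $y$.

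Fix $T>0$ and a constant $C>0$ to be chosen, set $\psi^\pm(x,t)=\pm(\norm{u_0}_{L^\infty}+Ct)$, and write $\hat\psi^\pm$ for the $\mathbb{R}^m$-valued functions $(\psi^\pm,\dots,\psi^\pm)$. Since $D\psi^\pm=0$, $D^2\psi^\pm=0$ and the components of $\hat\psi^\pm$ coincide, substituting $\hat\psi^\pm$ into the $k$-th equation of (\ref{colend}) produces, respectively, $\pm C+H_k(0,\hat\psi^\pm(x,t),0,\frac{x}{\epsilon},\omega)$. For $t\in[0,T]$ one has $\abs{\psi^\pm(x,t)}\le\norm{u_0}_{L^\infty}+CT$, so by (\ref{bounded}) there is a constant $C_4=C_4(\norm{u_0}_{L^\infty}+CT)$, depending on neither $x$, $\omega$ nor $\epsilon$, with $\abs{H_k(0,\hat\psi^\pm(x,t),0,y,\omega)}\le C_4$ for all $y\in\mathbb{R}^n$. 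Hence, provided $C$ is chosen so that $C\ge C_4(\norm{u_0}_{L^\infty}+CT)$, the function $\hat\psi^+$ is a supersolution and $\hat\psi^-$ a subsolution of (\ref{colend}) on $\mathbb{R}^n\times[0,T]$; and $\hat\psi^-(\cdot,0)\le u_0\le\hat\psi^+(\cdot,0)$ because $\abs{u_{k,0}(x)}\le\norm{u_0}_{L^\infty}$ for every $k$ and $x$.

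Given such a barrier pair, Perron's method (as recalled in \cite{IK}) yields a solution $u^\epsilon$ of (\ref{colend}) squeezed between $\hat\psi^-$ and $\hat\psi^+$; the comparison principle for monotone systems — available under (\ref{coercive}), (\ref{matsquare}), (\ref{lipsigma}), (\ref{hamincrease}), (\ref{hamdecrease}), (\ref{bounded}), (\ref{hamcon}), (\ref{monotone}), (\ref{colmonstrict}); see \cite{IK,CIL} — gives both the uniqueness and the regularity $u^\epsilon\in\BUC(\mathbb{R}^n\times[0,T);\mathbb{R}^m)$, and the squeeze gives, for every $k$ and $(x,t)\in\mathbb{R}^n\times[0,T]$,
$$\abs{u^\epsilon_k(x,t)}\le\norm{u_0}_{L^\infty}+Ct\le\norm{u_0}_{L^\infty}+CT,$$
which is the asserted bound with a constant $C(\norm{u_0}_{L^\infty},T)$ independent of $\epsilon$ and $\omega$.

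The only point that needs care — and the main obstacle — is that (\ref{bounded}) controls $H_k$ only on a ball whose radius already involves $C$, so the slope $C$ cannot simply be taken large. The standard remedy is a continuation argument: on a short first interval $[0,\tau_1]$, with $\tau_1$ comparable to the reciprocal of $C_4(\norm{u_0}_{L^\infty}+1)$, the affine barrier with slope $C_4(\norm{u_0}_{L^\infty}+1)$ works by (\ref{bounded}); one then restarts at $t=\tau_1$ with the (now bounded) datum $u^\epsilon(\cdot,\tau_1)$ and iterates, the number of steps needed to exhaust $[0,T]$ and the resulting bound depending only on $\norm{u_0}_{L^\infty}$ and $T$. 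Since the barriers are $x$-independent at every stage, all of this is uniform in $\epsilon$, and no input beyond the comparison/Perron machinery of \cite{IK,CIL} is required.
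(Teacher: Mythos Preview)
The paper omits this proof entirely, stating only that it ``follows from standard properties of viscosity solutions.'' Your barrier--Perron--comparison outline is precisely that standard machinery, and the key observation that spatially constant barriers do not see $x/\epsilon$ is the right reason for the $\epsilon$-independence.

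One refinement is worth making. The continuation argument you invoke to break the circular dependence between $C$ and $C_4(\norm{u_0}_{L^\infty}+CT)$ is, as stated, not guaranteed to reach an arbitrary $T$: if $C_4(R)$ grows faster than linearly in $R$ (which (\ref{bounded}) does not forbid), the time steps $\tau_j$ may sum to a finite number. The cleaner route here uses the monotonicity (\ref{colmonstrict}). For the constant vector $\hat\psi^+$ with all entries equal to $\norm{u_0}_{L^\infty}+Ct>0$, applying (\ref{colmonstrict}) with $q=0$ (the hypothesis $r_k-q_k=\max_i\abs{r_i-q_i}$ holds trivially since all entries agree) gives $H_k(0,\hat\psi^+,0,y,\omega)\ge H_k(0,0,0,y,\omega)\ge -C_4(1)$, so any $C\ge C_4(1)$ makes $\hat\psi^+$ a supersolution on all of $[0,T]$, with no dependence of $C$ on $T$ through $C_4$. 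The subsolution side is symmetric: with $r=0$ and $q=\hat\psi^-$ one gets $H_k(0,\hat\psi^-,0,y,\omega)\le H_k(0,0,0,y,\omega)\le C_4(1)$. This removes the circularity outright and yields the bound $\norm{u_0}_{L^\infty}+C_4(1)T$ directly.
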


In order to obtain Lipschitz estimates in what follows, we consider \begin{equation}\label{insteady} u_0\in C^{1,1}(\mathbb{R}^n;\mathbb{R}^m).\end{equation}  We account for this in the section's final proposition, where our main result is proven for arbitrary initial data $u_0\in\BUC(\mathbb{R}^n;\mathbb{R}^m)$.

Estimates are first obtained for the $\abs{u^\epsilon_{k,t}}$.  As demonstrated by (\ref{introex}), these estimates cannot be obtained uniformly, as $\epsilon\rightarrow 0$, for general initial conditions.  However, if $u_0$ satisfies \begin{equation}\label{inscalar} \begin{array}{ll} u_{i,0}=u_{j,0} & \textrm{for each}\;\;\;1\leq i,j\leq m, \end{array}\end{equation} then estimates may be obtained uniformly in $\epsilon$.

\begin{prop}\label{time}  Assume (\ref{steady}) and (\ref{insteady}).  There exist constants $C>0$  and $c_\epsilon>0$ satisfying, for each $k\in\left\{1,\ldots,m\right\}$,  \begin{equation*}\label{time1} \begin{array}{ll} -c_\epsilon\leq u^\epsilon_{k,t}\leq C & \textrm{on}\;\;\;\mathbb{R}^n\times[0,\infty).  \end{array}\end{equation*}  If $u_0$ satisfies (\ref{inscalar}) then $c_\epsilon>0$ may be chosen independently of $\epsilon$.\end{prop}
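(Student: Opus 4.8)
The plan is to prove the one-sided increment bounds $u^\epsilon_k(x,t+h)-u^\epsilon_k(x,t)\le Ch$ and $u^\epsilon_k(x,t+h)-u^\epsilon_k(x,t)\ge -c_\epsilon h$ for all $x\in\mathbb{R}^n$, all $t\ge 0$ and all sufficiently small $h>0$, and then divide by $h$ and let $h\to 0$. The engine in both cases is the comparison principle for (\ref{colend}) together with the following structural observation: for every constant $c\ge 0$, adding $c$ to every component of $u^\epsilon$ produces a supersolution of (\ref{colend}), and subtracting $c$ from every component produces a subsolution. Indeed, a constant shift leaves the gradients, Hessians and the difference variables $\epsilon^{-1}(u^\epsilon_k-u^\epsilon_j)$ unchanged, so the claim reduces to the two inequalities $H_k(p,r+\hat c,s,y,\omega)\ge H_k(p,r,s,y,\omega)\ge H_k(p,r-\hat c,s,y,\omega)$ with $\hat c=(c,\dots,c)$, and each of these is an instance of the weak-coupling strict monotonicity (\ref{colmonstrict}), whose hypothesis $r_k-q_k=\max_i\abs{r_i-q_i}$ holds automatically when $r-q$ is a nonnegative constant vector.

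Granting this, it suffices to bound $\norm{u^\epsilon_k(\cdot,h)-u_{k,0}}_{L^\infty(\mathbb{R}^n)}$, separately from above and below, on a short time interval, since $(x,t)\mapsto u^\epsilon(x,t+h)$ solves (\ref{colend}) with initial datum $u^\epsilon(\cdot,h)$. For the upper bound I would check that $\psi_k(x,t)=u_{k,0}(x)+Ct$ is a supersolution of (\ref{colend}) on $\mathbb{R}^n\times[0,T_0]$ once $C$ is large enough: by (\ref{matsquare}), (\ref{lipsigma}) and $u_0\in C^{1,1}$ the term $\epsilon\tr(A_kD^2\psi_k)$ is bounded by a constant uniform in $\epsilon\le 1$, while dropping the nonnegative terms in the coercivity bound (\ref{coercive}) gives $H_k(Du_{k,0},u_0+(Ct,\dots,Ct),\epsilon^{-1}(u_{k,0}-u_{j,0}),x/\epsilon,\omega)\ge -C_3$ with $C_3=C_3(\norm{u_0}_{L^\infty}+CT_0)$ --- so the large difference variable does no harm --- and an elementary choice of a small $T_0$ closes the resulting inequality for $C$. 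Comparison then yields $u^\epsilon_k(x,h)\le u_{k,0}(x)+Ch$ for $0<h\le T_0$ with $C$ independent of $\epsilon\le1$; feeding this into the shift observation (comparing $u^\epsilon(\cdot,\cdot+h)$ with $u^\epsilon$ shifted up by $Ch$ in every component, whose initial datum dominates $u^\epsilon(\cdot,h)$) gives $u^\epsilon_k(x,t+h)\le u^\epsilon_k(x,t)+Ch$ for every $t\ge 0$, hence $u^\epsilon_{k,t}\le C$.

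For the lower bound the candidate subsolution $\psi_k(x,t)=u_{k,0}(x)-ct$ forces control of $H_k$ from \emph{above}, and here (\ref{bounded}) gives $H_k(Du_{k,0},u_0-(ct,\dots,ct),\epsilon^{-1}(u_{k,0}-u_{j,0}),x/\epsilon,\omega)\le C_4(R)$ with $R$ essentially $\max\bigl(\norm{Du_0}_{L^\infty},\,\norm{u_0}_{L^\infty}+cT_0,\,\epsilon^{-1}\max_{j,k}\norm{u_{k,0}-u_{j,0}}_{L^\infty}\bigr)$; taking $c=c_\epsilon$ of size $C_4(C/\epsilon)$ (again with a small $T_0$ to close the inequality) makes $\psi$ a subsolution, so $u^\epsilon_k(x,h)\ge u_{k,0}(x)-c_\epsilon h$ for small $h$, and the shift argument --- comparing $u^\epsilon(\cdot,\cdot+h)$ with $u^\epsilon$ shifted down by $c_\epsilon h$ in every component, which uses only that the difference variable is unaffected and in particular does \emph{not} require $u^\epsilon$ to remain ``scalar'' --- upgrades this to $u^\epsilon_{k,t}\ge -c_\epsilon$ on all of $\mathbb{R}^n\times[0,\infty)$. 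The dependence on $\epsilon$ enters solely through the argument $\epsilon^{-1}(u_{k,0}-u_{j,0})$ of $H_k$ in this last estimate; if (\ref{inscalar}) holds that argument vanishes, $R$ is then bounded independently of $\epsilon$, and $c_\epsilon$ may be chosen $\epsilon$-independent.

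The main obstacle is the structural point of the first paragraph. It is the strict weak-coupling monotonicity (\ref{colmonstrict}) --- rather than the pointwise monotonicities (\ref{hamincrease})--(\ref{hamdecrease}), which pull $H_k$ in opposite directions under a diagonal shift of $r$, or the ordered monotonicity (\ref{monotone}), which requires $r_k=q_k$ --- that makes constant shifts of $u^\epsilon$ into sub/supersolutions and thereby allows one to propagate a short-time estimate to all times. The remaining ingredients --- the elementary fixed points linking the constants to $T_0$, a mollification of $u_0$ if one prefers classical test functions to $C^{1,1}$ ones, and the routine passage from the increment bounds to the stated bounds on $u^\epsilon_{k,t}$ --- are standard and I would only indicate them.
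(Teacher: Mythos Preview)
Your proof is correct and is essentially the paper's own: the paper constructs the same barriers $z_\pm = u_0 \pm \widehat{ct}$, applies comparison, and then invokes the contraction property (\ref{contraction}) --- which is exactly your shift observation based on (\ref{colmonstrict}) --- to pass from the short-time estimate to the bound on $u^\epsilon_{k,t}$. The auxiliary $T_0$ is unnecessary: applying (\ref{colmonstrict}) once more (with $r = u_0 + \widehat{Ct}$, $q = u_0$) gives $H_k(\,\cdot\,,u_0+\widehat{Ct},\,\cdot\,) \geq H_k(\,\cdot\,,u_0,\,\cdot\,) \geq -C_3(\norm{u_0}_{L^\infty})$, so $C$ can be fixed independently of $t$ from the start, which is how the paper proceeds.
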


\begin{proof}  The dependence on $\omega\in\Omega$ is suppressed as it plays no role.  Let \begin{equation*}\label{time8}  M_\epsilon=\max_{1\leq k,j\leq m}\norm{\frac{u_{k,0}-u_{j,0}}{\epsilon}}_{L^\infty(\mathbb{R}^n)}.\end{equation*}  In view of (\ref{coercive}) and (\ref{hamincrease}), for each $k\in\left\{1,\ldots,m\right\}$, \begin{equation*}\label{time2}\begin{array}{ll} -C_3\leq H_k(Du_{k,0},u_0,\frac{u_{k,0}-u_{j,0}}{\epsilon},\frac{x}{\epsilon})\leq\norm{H_k(Du_{k,0},u_0,\widehat{M_\epsilon},\frac{x}{\epsilon})}_{L^\infty(\mathbb{R}^n)}<\infty & \textrm{on}\;\;\mathbb{R}^n. \end{array}\end{equation*}  Moreover, (\ref{matsquare}) and (\ref{lipsigma}) yield that there exists $M>0$ satisfying \begin{equation*}\label{time3} \max_{1\leq k\leq m}\norm{\tr(A_kD^2u_{k,0})}_{L^\infty(\mathbb{R}^n)} \leq M. \end{equation*}

Let $C=C_3+M$ and $c_\epsilon=\max_{1\leq k\leq m}\norm{H_k(Du_{k,0},u_0,\widehat{M_\epsilon},\frac{x}{\epsilon})}_{L^\infty(\mathbb{R}^n)}+M.$  It follows that the functions \begin{equation*}\label{time4} \begin{array}{lll} z_+=u_0+\widehat{Ct} & \textrm{and} & z_-=u_0-\widehat{c_\epsilon t} \end{array}\end{equation*} are respectively a supersolution and a subsolution of (\ref{colend}).

The comparison principle implies \begin{equation*}\label{time5}\begin{array}{ll} z_-\leq u^\epsilon\leq z_+ & \textrm{on}\;\;\;\mathbb{R}^n\times[0,\infty),\end{array}\end{equation*} which, in view of (\ref{contraction}), implies \begin{equation*}\label{time7}\begin{array}{ll} -c_\epsilon \leq u^\epsilon_{k,t} \leq C & \textrm{on}\;\;\;\mathbb{R}^n\times[0,\infty).  \end{array} \end{equation*}

If $u_0$ satisfies (\ref{inscalar}), then $M_\epsilon=0$ for each $\epsilon>0$.   Therefore, $c_\epsilon>0$ may be chosen uniformly as $\epsilon\rightarrow 0$.  \end{proof}

We now establish a gradient estimate by Bernstein's method.  The proof is essentially a repetition of the argument appearing in Proposition \ref{cellsol} and, because this fact is not needed in what follows, we omit it.  Notice that in this case, however, the bounds for the $\abs{Du^\epsilon_k}$ depend on the previous estimates for the $\abs{u^\epsilon_{k,t}}$.  We therefore obtain these bounds uniformly, as $\epsilon\rightarrow 0$, only in the case that $u_0$ satisfies (\ref{inscalar}).

\begin{prop}\label{grad}  Assume (\ref{steady}) and (\ref{insteady}).  There exists $C_\epsilon>0$ such that $$\max_{1\leq k\leq m}\norm{Du^\epsilon_k}_{L^\infty(\mathbb{R}^n\times[0,\infty))}\leq C_\epsilon.$$  If $u_0$ satisfies (\ref{inscalar}), then $C_\epsilon$ may be chosen uniformly as $\epsilon\rightarrow 0$. \end{prop}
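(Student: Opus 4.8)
\emph{Proof strategy.} The plan is to adapt Bernstein's method in the same way as in the proof of Proposition~\ref{cellsol}, now for the parabolic system (\ref{colend}) and with the bounds on the time derivatives from Proposition~\ref{time} playing the role that the estimates (\ref{gradbound}) on the $\delta v^\delta_k$ played there. First I would fix $\epsilon>0$, $k\in\{1,\dots,m\}$, suppress the dependence on $\omega$, and record the three a priori inputs: the $L^\infty$ bound on the $u^\epsilon_k$ from Proposition~\ref{uniformbound} (depending only on $\norm{u_0}_{L^\infty}$ and $T$), the bound $\norm{u^\epsilon_{k,t}}_{L^\infty(\mathbb{R}^n\times[0,\infty))}\le c_\epsilon$ from Proposition~\ref{time} (with $c_\epsilon$ independent of $\epsilon$ under (\ref{inscalar})), and the Lipschitz bound $\norm{Du_0}_{L^\infty(\mathbb{R}^n)}$ coming from (\ref{insteady}), which serves as the barrier at $t=0$.

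Next I would carry out the Bernstein computation. Differentiating the $k$-th equation of (\ref{colend}) in $x_l$, multiplying by $u^\epsilon_{k,x_l}$, summing over $l$, and writing $\sigma_k=\abs{Du^\epsilon_k}^2$ gives a degenerate parabolic inequality for $\sigma_k$ in which: the second-order contributions, including the $\epsilon^{-1}$ factors produced by the derivatives of $\Sigma_k(\cdot/\epsilon)$, are absorbed by the good term $2\epsilon\abs{\Sigma_k^tD^2u^\epsilon_k}^2$ after a Cauchy inequality, using the Lipschitz square root decomposition (\ref{matsquare})--(\ref{lipsigma}) exactly as in the scalar arguments of \cite{AS1,AS,LS1}; the $D_pH_k$ term is a transport term whose unfavorable part is controlled by the coercivity (\ref{coercive}); the $D_rH_k$ and $D_yH_k$ contributions are bounded by $C_\epsilon(1+\sigma_k)$ via (\ref{hamcon}); and, because we also have the time-derivative bound from Proposition~\ref{time}, the term $\epsilon\,\tr(A_kD^2u^\epsilon_k)$ that one substitutes for via the equation, namely $\epsilon\,\tr(A_kD^2u^\epsilon_k)=u^\epsilon_{k,t}+H_k$, is controlled by $c_\epsilon$ together with (\ref{coercive}) and (\ref{hamcon}). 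As in Proposition~\ref{cellsol}, it is convenient to localize $\sigma_k$ against the cutoff $\phi_R$, or to argue by doubling of variables as in \cite{CIL}; the precise bookkeeping is routine and I would omit it.

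The genuinely new point, precisely as in Proposition~\ref{cellsol}, is the coupling term $H_k(\cdot,\cdot,\epsilon^{-1}(u^\epsilon_k-u^\epsilon_j),\cdot)$: its $x_l$-derivative contributes $\epsilon^{-1}\sum_{i\neq k}(D_{s_i}H_k)(u^\epsilon_{k,x_l}-u^\epsilon_{i,x_l})$, and after multiplying by $u^\epsilon_{k,x_l}$ and summing this becomes $\epsilon^{-1}\sum_{i\neq k}(D_{s_i}H_k)(\abs{Du^\epsilon_k}^2-Du^\epsilon_i\cdot Du^\epsilon_k)$, which is a priori of size $O(\epsilon^{-1})$ and is the main obstacle. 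The hard part will be to dispose of it, and the way I would do so is to estimate not the individual $\sigma_k$ but $\sigma=\max_{1\le k\le m}\sigma_k$: at a (localized) maximum of $\sigma$ attained by an index $k$ one has $\abs{Du^\epsilon_i}\le\abs{Du^\epsilon_k}$ for every $i$, hence $\abs{Du^\epsilon_k}^2-Du^\epsilon_i\cdot Du^\epsilon_k\ge0$, while $D_{s_i}H_k\ge0$ by the monotonicity (\ref{hamincrease}); thus the singular coupling term carries the favorable sign at the maximum and may be discarded, and the analogous cross terms coming from $D_{r_i}H_k$ are nonpositive there by (\ref{hamdecrease}). If one prefers, one can instead first derive, by repeating verbatim the test-function computation in Proposition~\ref{cellsol} with $u^\epsilon_{k,t}(\cdot,t)$ in place of $\delta v^\delta_k$, an $L^\infty$ bound on the scaled differences $\epsilon^{-1}(u^\epsilon_k-u^\epsilon_j)$ on $B_R\times[0,T]$ --- $\epsilon$-uniform under (\ref{inscalar}) since the differences vanish at $t=0$ --- and then treat the coupling by (\ref{hamcon}). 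With this term removed, the parabolic maximum principle applied to $\max_k\sigma_k$, compared against the $t=0$ barrier $\norm{Du_0}_{L^\infty(\mathbb{R}^n)}$ and using the spatial boundedness from Proposition~\ref{uniformbound}, yields $\max_{1\le k\le m}\norm{Du^\epsilon_k}_{L^\infty(\mathbb{R}^n\times[0,\infty))}\le C_\epsilon$. Tracing the dependencies, every constant depends only on $\norm{u_0}_{C^{1,1}}$, on the structural constants in (\ref{coercive}), (\ref{lipsigma}), (\ref{hamcon}), and on $c_\epsilon$; since $c_\epsilon$ is $\epsilon$-independent under (\ref{inscalar}) by Proposition~\ref{time}, $C_\epsilon$ may then be taken uniform as $\epsilon\to0$.
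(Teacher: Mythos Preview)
Your proposal is correct and matches the paper's approach exactly: the paper omits this proof entirely, remarking only that it is Bernstein's method ``essentially a repetition of the argument appearing in Proposition~\ref{cellsol}'' with the time-derivative bounds of Proposition~\ref{time} supplying the extra a~priori input (and hence the $\epsilon$-dependence). Your sketch fills in precisely these details, and your device of passing to $\max_k\abs{Du^\epsilon_k}^2$ so that the singular $\epsilon^{-1}$ coupling term acquires the favorable sign via (\ref{hamincrease}) is the expected way to carry out the Bernstein computation for the coupled system.
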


We now obtain more precise control of the $u^\epsilon_k$'s from above.  This estimate will play an important role in the proof of the collapse of (\ref{colend}).

\begin{prop}\label{altcon}  Assume (\ref{steady}) and (\ref{insteady}).  There exist $C_i=C_i(T)>0$, for $i=1,2$, satisfying, for each $\epsilon>0$ and $k\in\left\{1,\ldots,m\right\}$, $$\begin{array}{ll} u^\epsilon_k-\underline{u}_0\leq C_1(\epsilon+t+e^{-\frac{C_2t}{\epsilon}}) & \textrm{on}\;\;\;\mathbb{R}^n\times[0,T].\end{array}$$\end{prop}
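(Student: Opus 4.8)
The plan is to bound $u^\epsilon$ from above by an explicit supersolution of (\ref{colend}) whose components relax, at the rate $C_2/\epsilon$ forced by the coercivity (\ref{coercive}), from $u_0$ toward $\underline{u}_0$. First I would record some reductions. By (\ref{insteady}) set $L=\max_k\norm{Du_{k,0}}_{L^\infty(\mathbb{R}^n)}$ and $L_1=\max_k\norm{D^2u_{k,0}}_{L^\infty(\mathbb{R}^n)}$; then $\underline{u}_0=\min_k u_{k,0}$ is Lipschitz with constant $L$ and semiconcave with constant $L_1$, being the minimum of functions each of which is semiconcave with constant $L_1$. Write $D=\max_k\norm{u_{k,0}-\underline{u}_0}_{L^\infty(\mathbb{R}^n)}$ and $\overline{M}=L_1\max_k\norm{\tr(A_k)}_{L^\infty(\mathbb{R}^n)}$, the latter finite by (\ref{matsquare}) and (\ref{lipsigma}). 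By Proposition \ref{uniformbound} the $u^\epsilon_k$ are bounded on $\mathbb{R}^n\times[0,T]$ uniformly in $\epsilon\leq 1$, so after a standard truncation of the $r$-dependence of the $H_k$ outside a ball of radius $R=R(\norm{u_0}_{L^\infty},T)$ — which leaves $u^\epsilon$ unchanged and preserves all the structural hypotheses — we may assume the constants $C_1,C_2,C_3$ in (\ref{coercive}) are independent of $r$. Now define, for $k\in\left\{1,\ldots,m\right\}$,
$$w_k(x,t)=(1-e^{-C_2t/\epsilon})\underline{u}_0(x)+e^{-C_2t/\epsilon}u_{k,0}(x)+(C_3+\overline{M}\epsilon)t,$$
so that $w_k(x,0)=u_{k,0}(x)$, each $w_k$ is bounded on $\mathbb{R}^n\times[0,T]$, and $w_k-w_j=e^{-C_2t/\epsilon}(u_{k,0}-u_{j,0})$.

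The heart of the argument is to check that $w=(w_1,\ldots,w_m)$ is a viscosity supersolution of (\ref{colend}). Suppose $\psi\in C^2$ touches $w_k$ from below at an interior point $(x_0,t_0)$ with $t_0>0$. Since $t\mapsto w_k(x_0,t)$ is smooth, $\psi_t(x_0,t_0)=-\frac{C_2}{\epsilon}e^{-C_2t_0/\epsilon}(u_{k,0}(x_0)-\underline{u}_0(x_0))+C_3+\overline{M}\epsilon$. In the $x$-variable $x\mapsto(w_k-\psi)(x,t_0)$ has a local minimum at $x_0$; dividing by $1-e^{-C_2t_0/\epsilon}>0$ shows that $\underline{u}_0$ is touched from below at $x_0$ by the $C^2$ function $\Psi=(1-e^{-C_2t_0/\epsilon})^{-1}(\psi(\cdot,t_0)-e^{-C_2t_0/\epsilon}u_{k,0}-(C_3+\overline{M}\epsilon)t_0)$. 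Semiconcavity of $\underline{u}_0$ with constant $L_1$ forces $D^2\Psi(x_0)\leq L_1 I$ and $D\Psi(x_0)\in D^-\underline{u}_0(x_0)$, hence $\abs{D\Psi(x_0)}\leq L$; unwinding the definition of $\Psi$, $D\psi(x_0,t_0)$ and $D^2\psi(x_0,t_0)$ are convex combinations of $D\Psi(x_0),Du_{k,0}(x_0)$ and of $D^2\Psi(x_0),D^2u_{k,0}(x_0)$ respectively, so $\abs{D\psi(x_0,t_0)}\leq L$ and $D^2\psi(x_0,t_0)\leq L_1 I$. Since $A_k\geq 0$, this yields $-\epsilon\tr(A_k(x_0/\epsilon,\omega)D^2\psi(x_0,t_0))\geq-\epsilon L_1\tr(A_k(x_0/\epsilon,\omega))\geq-\overline{M}\epsilon$. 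Finally, because $\underline{u}_0(x_0)=\min_i u_{i,0}(x_0)$,
$$\max_{i\neq k}\Big(\tfrac{w_k-w_i}{\epsilon}\Big)_+(x_0,t_0)=\tfrac{e^{-C_2t_0/\epsilon}}{\epsilon}(u_{k,0}(x_0)-\underline{u}_0(x_0)),$$
so (\ref{coercive}), after discarding $C_1\abs{D\psi(x_0,t_0)}^{\gamma_k}\geq 0$, gives $H_k(D\psi,w,\frac{w_k-w_j}{\epsilon},x_0/\epsilon,\omega)(x_0,t_0)\geq\frac{C_2e^{-C_2t_0/\epsilon}}{\epsilon}(u_{k,0}(x_0)-\underline{u}_0(x_0))-C_3$. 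Adding the three estimates, the $O(1/\epsilon)$ terms cancel exactly and the remaining terms sum to $0$, so $\psi_t-\epsilon\tr(A_kD^2\psi)+H_k(\cdots)\geq 0$ at $(x_0,t_0)$, as required.

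Since $w_k(\cdot,0)=u^\epsilon_k(\cdot,0)$ and both $u^\epsilon$ and $w$ are bounded on $\mathbb{R}^n\times[0,T]$, the comparison principle for (\ref{colend}) (well-posedness of the truncated system; see \cite{CIL,IK}) gives $u^\epsilon_k\leq w_k$ on $\mathbb{R}^n\times[0,T]$, whence
$$u^\epsilon_k-\underline{u}_0\leq e^{-C_2t/\epsilon}(u_{k,0}-\underline{u}_0)+(C_3+\overline{M}\epsilon)t\leq D\,e^{-C_2t/\epsilon}+C_3\,t+\overline{M}T\,\epsilon,$$
and the statement follows with $C_1=\max\{D,C_3,\overline{M}T\}$ and $C_2$ the coercivity constant in (\ref{coercive}).

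I expect the main obstacle to be purely technical: $\underline{u}_0$ is only Lipschitz and semiconcave rather than $C^{1,1}$, which is why the supersolution property must be verified through viscosity test functions, and why one needs the one-sided Hessian bound $D^2\psi(x_0,t_0)\leq L_1 I$ to keep the degenerate diffusion term under control; a secondary point is the bookkeeping of the coercivity constants, which is why one truncates the $r$-dependence of the $H_k$ using the a priori bound of Proposition \ref{uniformbound}. The one genuinely structural idea — the exact cancellation of the $1/\epsilon$ terms — works precisely because the relaxation rate in the ansatz is taken equal to the coercivity constant $C_2$.
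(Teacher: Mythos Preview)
Your argument is correct and takes a genuinely different route from the paper. Rather than constructing an explicit barrier, the paper doubles the spatial variable and tracks
\[
M^\epsilon(t)=\max_{k}\sup_{x,y}\Bigl(u^\epsilon_k(x,t)-\underline{u}_0(y)-\tfrac{1}{2\epsilon}\abs{x-y}^2-Ct\Bigr),
\]
with $C$ the upper bound on $u^\epsilon_{k,t}$ from Proposition~\ref{time}; testing $M^\epsilon$ against smooth functions of $t$ alone and invoking (\ref{coercive}) yields the viscosity differential inequality $M^{\epsilon\,\prime}\leq\tilde C(1-M^\epsilon/\epsilon)$, whence $M^\epsilon(t)\leq\epsilon+(M^\epsilon(0)-\epsilon)e^{-\tilde Ct/\epsilon}$. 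The driving mechanism---exponential relaxation at rate $C_2/\epsilon$ forced by the $s$-coercivity---is the same in both proofs, but your construction makes the barrier explicit and reduces everything to a single application of comparison, at the price of exploiting the semiconcavity of $\underline{u}_0$ (available under (\ref{insteady})) to control $D^2\psi$. The paper's doubling handles the nonsmoothness of $\underline{u}_0$ through the quadratic penalty, using only that $\underline{u}_0$ is Lipschitz, but in exchange must invoke Proposition~\ref{time} to pass from $\min_i u^\epsilon_i$ back to $\underline{u}_0$; your argument avoids that dependence and is in that sense more self-contained, while the paper's is closer to standard viscosity technique for nonsmooth obstacles.
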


\begin{proof}  The role of $\omega\in\Omega$ is suppressed as it plays no role.  We consider, for each $\epsilon>0$, with $C>0$ as in Proposition \ref{time}, $$M^\epsilon(t)=\max_{1\leq k\leq m}\sup_{(x,y)\in\mathbb{R}^n\times\mathbb{R}^n}(u^\epsilon_k(x,t)-\underline{u}_0(y)-\frac{1}{2\epsilon}\abs{x-y}^2-Ct).$$  We will show that, in the viscosity sense, there exists $\tilde{C}>0$ satisfying, for each $\epsilon>0$, $$M^{\epsilon '} (t)\leq \tilde{C}(1-\frac{M^\epsilon(t)}{\epsilon}).$$

Fix $\epsilon>0$.  Suppose that $M^\epsilon-\phi$ has a strict local maximum at $t_0\in(0,T)$ for $\phi\in \C^2(0,T)$.  We may assume, without loss of generality, that $t_0$ is a strict global maximum.

Define, for each $\eta>0$, \begin{equation*}\label{altcon1} \Phi(x,y,t,k)=u^\epsilon_k(x,t)-\underline{u}_0(y)-\frac{1}{2\epsilon}\abs{x-y}^2-\phi(t)-\frac{\eta}{2}\abs{y}^2.\end{equation*} For all $\eta$ sufficiently small, by Proposition \ref{uniformbound} and because $\underline{u}_0\in\C^{0,1}(\mathbb{R}^n)$, the function $\Phi$ achieves a global maximum at $(x_\eta,y_\eta,t_\eta,k_\eta)$.  Therefore, with the notation of \cite{CIL}, \begin{equation*}\label{altcon2} \left(\phi'(t_\eta),\epsilon^{-1}(x_\eta-y_\eta),\epsilon^{-1}I\right)\in\mathcal{J}^{2,+}u^\epsilon_{k_\eta}(x_\eta,t_\eta),\end{equation*} and, because $u^\epsilon$ is a solution of (\ref{colend}), we have \begin{equation*}\label{altcon3} \phi'(t_\eta)-\tr(A_{k_\eta}(\frac{x_\eta}{\epsilon},\omega))+H_{k_\eta}(\epsilon^{-1}(x_\eta-y_\eta),u^\epsilon,\frac{u^\epsilon_{k_\eta}-u^\epsilon_j}{\epsilon},\frac{x_\eta}{\epsilon},\omega)\leq 0.\end{equation*}

It follows from (\ref{coercive}), (\ref{matsquare}) and (\ref{lipsigma}) that there exists $\tilde{C}>0$ independent of $\epsilon>0$ satisfying \begin{equation*}\label{altcon4} \phi'(t_\eta)\leq \tilde{C}(1-\frac{u^\epsilon_{k_\eta}(x_\eta,t_\eta)-\min_{1\leq i\leq m}u^\epsilon_i(x_\eta,t_\eta)}{\epsilon}).\end{equation*}  Proposition \ref{time} yields, for the same constant $C>0$ independent of $\epsilon>0$, \begin{equation}\label{altcon5} \phi'(t_\eta)\leq \tilde{C}(1-\frac{u^\epsilon_k(x_\eta,t_\eta)-\underline{u}_0(x_\eta)-Ct_\eta}{\epsilon}).\end{equation}  Since $\underline{u}_0\in \C^{0,1}(\mathbb{R}^n)$, we have $\abs{x_\eta-y_\eta}\leq \epsilon \norm{D\underline{u}_0}_{L^\infty(\mathbb{R}^n)}$.  Therefore, for a perhaps larger constant $\tilde{C}>0$ independent of $\epsilon>0$, \begin{equation}\label{altcon6} \phi'(t_\eta)\leq \tilde{C}(1-\frac{u^\epsilon_{k_\eta}(x_\eta,t_\eta)-\underline{u}_0(y_\eta)-Ct_\eta}{\epsilon})\leq\tilde{C}(1-\frac{u^\epsilon_{k_\eta}(x_\eta,t_\eta)-\underline{u}_0(y_\eta)-\frac{1}{2\epsilon}\abs{x_\eta-y_\eta}^2-Ct_\eta}{\epsilon}).\end{equation}

Since $t_0$ is a strict global maximum of $M^\epsilon-\phi$ we have, as $\eta\rightarrow 0$, $$\begin{array}{lll} t_\eta\rightarrow t_0 & \textrm{and} & M^\epsilon(t_0)=\lim_{\eta\rightarrow 0}(u^\epsilon_{k_\eta}(x_\eta,t_\eta)-\underline{u}_0(y_\eta)-\frac{1}{2\epsilon}\abs{x_\eta-y_\eta}^2-Ct_\eta).\end{array}$$  Therefore, by passing to the limit as $\eta\rightarrow 0$ in (\ref{altcon6}), \begin{equation*}\label{altcon7} \phi'(t_0)\leq \tilde{C}(1-\frac{M^\epsilon(t_0)}{\epsilon}).\end{equation*}  This implies that, in the viscosity sense, $$\begin{array}{lllll} M^{\epsilon '}\leq \tilde{C}(1-\frac{M^\epsilon}{\epsilon}) & \textrm{on}\;\;(0,T) & \textrm{and, therefore,} & M^\epsilon(t)\leq \epsilon+(M^\epsilon(0)-\epsilon)e^{-\frac{\tilde{C}t}{\epsilon}} & \textrm{on}\;\;[0,T].\end{array}$$  Let $M_0=\sup_{\epsilon>0}M^\epsilon(0)<\infty$ and conclude, using the definition of $M^\epsilon$, that the proposition holds for $C_1=\max\left\{1,C,M_0\right\}$ and $C_2=\tilde{C}$.\end{proof}

We now obtain more precise control of the $u^\epsilon_k$'s from below.  This estimate and Proposition \ref{altcon} combine to identify the appropriate initial data $\underline{u}_0$.

\begin{prop}\label{altcon1}  Assume (\ref{steady}) and (\ref{insteady}).  For each $\delta>0$ there exists $C_\delta>0$ satisfying, for each $\omega\in\Omega$, $k\in\left\{1,\ldots,m\right\}$ and $\epsilon>0$, $$\begin{array}{ll} u^\epsilon_k\geq \underline{u}_0-\delta-C_\delta t & \textrm{on}\;\;\mathbb{R}^n\times[0,\infty).\end{array}$$\end{prop}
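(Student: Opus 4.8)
The plan is to bound $u^\epsilon$ from below by the solution of the \emph{same} system, but with a smoothed, spatially scalar initial datum lying below $\underline{u}_0$, and then to invoke Proposition \ref{time}, whose lower bound on the time derivative is uniform in $\epsilon$ precisely when the initial datum satisfies (\ref{inscalar}). The loss of $\delta$ in the statement will be exactly the error committed in replacing the merely Lipschitz function $\underline{u}_0$ by a $C^{1,1}$ one.

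First I would fix $\delta>0$. Since $u_0\in\BUC(\mathbb{R}^n;\mathbb{R}^m)\cap C^{1,1}(\mathbb{R}^n;\mathbb{R}^m)$ by (\ref{steady}) and (\ref{insteady}), the function $\underline{u}_0=\min_{1\le k\le m}u_{k,0}$ is bounded and Lipschitz, say with constant $L$ (if $L=0$ it is constant and the assertion is immediate, so assume $L>0$). Mollifying $\underline{u}_0$ at scale $\rho>0$ and subtracting $L\rho$ produces $\phi_\rho\in C^\infty(\mathbb{R}^n)$ with
\[
\underline{u}_0-2L\rho\le\phi_\rho\le\underline{u}_0,\qquad \norm{D\phi_\rho}_{L^\infty(\mathbb{R}^n)}\le L,\qquad \norm{D^2\phi_\rho}_{L^\infty(\mathbb{R}^n)}\le C\rho^{-1};
\]
choosing $\rho=\delta/(2L)$ gives $\underline{u}_0-\delta\le\phi_\rho\le\underline{u}_0$. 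The vector $\widehat{\phi_\rho}=(\phi_\rho,\ldots,\phi_\rho)$ lies in $\BUC(\mathbb{R}^n;\mathbb{R}^m)\cap C^{1,1}(\mathbb{R}^n;\mathbb{R}^m)$, satisfies (\ref{inscalar}), and, since $\phi_\rho\le\underline{u}_0\le u_{k,0}$ for every $k$, satisfies $\widehat{\phi_\rho}\le u_0$.

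Next I would let $v^\epsilon$ be the solution of (\ref{colend}) with initial datum $\widehat{\phi_\rho}$, which exists by Proposition \ref{uniformbound}. Because $\widehat{\phi_\rho}$ satisfies (\ref{insteady}) and (\ref{inscalar}), Proposition \ref{time} furnishes a constant $C_\delta>0$, independent of $\epsilon$, with $v^\epsilon_{k,t}\ge -C_\delta$ on $\mathbb{R}^n\times[0,\infty)$ for every $k$; since the constants produced there are controlled by $C_3$ in (\ref{coercive}), by $\norm{\tr(A_kD^2\phi_\rho)}_{L^\infty(\mathbb{R}^n)}$, bounded uniformly in $\omega$ by (\ref{matsquare})--(\ref{lipsigma}), and by $\norm{H_k(D\phi_\rho,\widehat{\phi_\rho},\widehat{0},\tfrac{\cdot}{\epsilon},\omega)}_{L^\infty(\mathbb{R}^n)}$, bounded uniformly in $\epsilon$ and $\omega$ by (\ref{bounded}) (here the difference variable of $H_k$ is evaluated at $\widehat{0}$, since the components of $\widehat{\phi_\rho}$ coincide), the constant $C_\delta$ may also be taken independent of $\omega$. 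Integrating the bound on $v^\epsilon_{k,t}$ in time and using $v^\epsilon_k(\cdot,0)=\phi_\rho$ gives $v^\epsilon_k\ge\phi_\rho-C_\delta t$ on $\mathbb{R}^n\times[0,\infty)$.

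Finally, since $\widehat{\phi_\rho}\le u_0$ and $v^\epsilon$, $u^\epsilon$ solve the same monotone system (\ref{colend}), for which the comparison principle holds (see \cite{IK,CIL}), the solution map is order preserving and $v^\epsilon\le u^\epsilon$ componentwise on $\mathbb{R}^n\times[0,\infty)$. Hence
\[
u^\epsilon_k\ \ge\ v^\epsilon_k\ \ge\ \phi_\rho-C_\delta t\ \ge\ \underline{u}_0-\delta-C_\delta t\qquad\textrm{on}\;\;\mathbb{R}^n\times[0,\infty),
\]
with $C_\delta$ depending only on $\delta$ and $u_0$, which is the assertion. The proof involves no serious analysis; the only real decision is the choice of the comparison function, namely the passage to a scalar initial datum so that the time-derivative estimate of Proposition \ref{time} becomes $\epsilon$-uniform, which in turn forces the smoothing of $\underline{u}_0$ and thus the appearance of $\delta$.
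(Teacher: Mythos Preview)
Your proof is correct and follows essentially the same approach as the paper: mollify $\underline{u}_0$ to obtain a $C^{1,1}$ scalar initial datum below $u_0$, solve (\ref{colend}) with that datum, invoke Proposition~\ref{time} (whose lower bound is $\epsilon$-uniform because (\ref{inscalar}) holds), and compare. Your write-up is in fact more detailed than the paper's, which carries out precisely the same three steps in as many sentences.
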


\begin{proof} The role of $\omega\in\Omega$ is suppressed as it plays no role.  Fix $\delta>0$.  Since $\underline{u}_0\in\C^{0,1}(\mathbb{R}^n)$, define by convolution $\underline{u}_0^\delta\in\C^{1,1}(\mathbb{R}^n)$ which, after subtracting a constant, satisfies \begin{equation*}\label{infstarsup5}\begin{array}{ll} \underline{u}_0-\delta\leq \underline{u}_0^\delta\leq \underline{u}_0 & \textrm{on}\;\;\mathbb{R}^n.\end{array}\end{equation*}

Let $u^{\epsilon,\delta}=(u^{\epsilon,\delta}_1,\ldots,u^{\epsilon,\delta}_m)$ denote the solution of (\ref{colend}) corresponding to the initial condition $\widehat{\underline{u}_0^\delta}$.  Since $\widehat{\underline{u}_0^\delta}$ satisfies (\ref{inscalar}) with $\widehat{\underline{u}_0^\delta}\leq u_0$, the comparison principle and Proposition \ref{time} imply that there exists $C_\delta>0$ independent of $\epsilon>0$ satisfying, for each $k\in\left\{1,\ldots,m\right\}$, \begin{equation*}\label{infstarsub7}\begin{array}{ll} u^\epsilon_k\geq u^{\epsilon,\delta}_k\geq\underline{u}_0-\delta-C_\delta t & \textrm{on}\;\;\mathbb{R}^n\times[0,\infty).\end{array}\end{equation*}\end{proof}

Having characterized the solutions $u^\epsilon$ we begin the proof of the main result.  In order to overcome the absence of estimates, uniform in $\epsilon$, for the $\abs{u^\epsilon_{k,t}}$ and $\abs{Du^\epsilon_k}$, we introduce the half-relaxed limits \begin{equation}\label{star} u^*(x,t,\omega)=\limsup_{\epsilon\rightarrow 0}\max_{1\leq k\leq m}\left\{\;u^\epsilon_k(y,s,\omega)\;|\;\abs{y-x}+\abs{s-t}\leq\epsilon\;\right\}\end{equation} and\begin{equation*}\label{star1} u_*(x,t,\omega)=\liminf_{\epsilon\rightarrow 0}\min_{1\leq k\leq m}\left\{\;u^\epsilon_k(y,s,\omega)\;|\;\abs{y-x}+\abs{s-t}\leq\epsilon\;\right\},\end{equation*} where it is understood that, for each $\epsilon>0$, the respective maximum and minimum are taken for $\max(t-\epsilon,0)\leq s\leq t+\epsilon$.

Following standard methods in the theory of viscosity solutions, we wish to prove that $u^*$ and $u_*$ are respectively a subsolution and a supersolution of (\ref{eqend}) on a subset of full probability.  However, as demonstrated by (\ref{introex}) and due to the initial boundary layer, definition (\ref{star}) does not yield a relationship between $u^*$ and $\underline{u}_0$ at $t=0$.  Indeed, if $u_0$ does not satisfy (\ref{inscalar}), Proposition \ref{time} yields $$\begin{array}{ll} u^*=\max_{1\leq k\leq m}u_{k,0}>\min_{1\leq k\leq m}u_{k,0}=\underline{u}_0 & \textrm{on}\;\;\mathbb{R}^n\times\left\{0\right\}.\end{array}$$

To obtain the desired relationship at $t=0$, we use that, in view of Proposition \ref{altcon}, there exists $C>0$ satisfying, for each $\omega\in\Omega$, \begin{equation}\label{starup} u^*\leq \underline{u}_0+Ct\;\;\textrm{on}\;\;\mathbb{R}^n\times(0,1],\end{equation}  and define \begin{equation*}\label{altstar}\tilde{u}^*=\left\{\begin{array}{ll} u^* & \textrm{on}\;\;\;\mathbb{R}^n\times (0,\infty), \\ \underline{u}_0 & \textrm{on}\;\;\;\mathbb{R}^n\times\left\{0\right\}.\end{array}\right.\end{equation*} Observe that, in view of (\ref{starup}), $\tilde{u}^*$ is continuous at $t=0$ and therefore $\tilde{u}^*\in\USC(\mathbb{R}^n\times[0,\infty))$.

We now present, in the final two propositions, the proof of our main result.

\begin{prop}\label{supstarsub} Assume (\ref{steady}) and (\ref{insteady}).  For each $\omega\in\Omega_3$, $\tilde{u}^*$ and $u_*$ are respectively a subsolution and a supersolution of $$\left\{\begin{array}{ll} u_t+\overline{H}(Du,u)=0 & \textrm{on}\;\;\;\mathbb{R}^n\times (0,\infty), \\ u=\underline{u}_0 & \textrm{on}\;\;\;\mathbb{R}^n\times\left\{0\right\}. \end{array}\right.$$\end{prop}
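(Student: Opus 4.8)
\emph{Proof plan.} The plan is to fix $\omega\in\Omega_3$ and prove, by the perturbed test function method together with the comparison principle for the monotone system (\ref{colend}), that $\tilde u^*$ is a viscosity subsolution and $u_*$ a viscosity supersolution of (\ref{eqend}); the correctors are built from the solutions of the approximate macroscopic system (\ref{cell}) using the refined identification of $\overline H$ in Proposition \ref{colupgrade}, and the relation with $\underline{u}_0$ at $t=0$ is read off from Propositions \ref{altcon} and \ref{altcon1}. I describe the subsolution claim for $\tilde u^*$ in detail; the supersolution claim for $u_*$ is a mirror image. Suppose $\phi\in C^2$ and $\tilde u^*-\phi$ has a local maximum at $(x_0,t_0)$ with $t_0>0$; adding a small quartic perturbation we may take the maximum strict, and after subtracting a constant from $\phi$ we may assume $\tilde u^*(x_0,t_0)=\phi(x_0,t_0)=:r_0$. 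Set $p_0:=D\phi(x_0,t_0)$ and argue by contradiction, assuming $\phi_t(x_0,t_0)+\overline H(p_0,r_0)=\theta>0$. Let $v^\delta$ be the solution of (\ref{cell}) corresponding to $(p_0,r_0)$, set $w^\delta_k=v^\delta_k-v^\delta_1(0,\omega)$ as in (\ref{auxnorm}) (so $w^\delta$ solves (\ref{cellnorm})), and define $\phi^\epsilon_k(x,t)=\phi(x,t)+\epsilon w^\epsilon_k(x/\epsilon,\omega)$.

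The first ingredient is that $\epsilon w^\epsilon_k(\cdot/\epsilon,\omega)\to0$ uniformly on bounded sets for every $k\in\{1,\dots,m\}$, so that $\phi^\epsilon_k\to\phi$ locally uniformly. Indeed $\epsilon w^\epsilon_k(y,\omega)=\epsilon v^\epsilon_k(y,\omega)-\epsilon v^\epsilon_1(0,\omega)$; by Proposition \ref{colupgrade}, $\epsilon v^\epsilon_1(\cdot,\omega)\to-\overline H(p_0,r_0)$ uniformly on $B_{R/\epsilon}$, which disposes of the second term and, for $k=1$, of the first, while for $k\neq1$ one adds $\epsilon(v^\epsilon_k-v^\epsilon_1)(\cdot,\omega)$, which on $B_{R/\epsilon}$ is bounded by $\epsilon$ times the difference estimate of Proposition \ref{cellsol}, a bound that grows at most sublinearly in the radius (as the Morrey step in that proof shows), hence tends to $0$. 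Next comes the corrector computation: if $\psi\in C^2$ touches $\phi^\epsilon_k$ from below at $(y_0,s_0)$, the rescaled function $z\mapsto w^\epsilon_k(z,\omega)$ is touched from below by $z\mapsto\epsilon^{-1}\big(\psi(\epsilon z,s_0)-\phi(\epsilon z,s_0)\big)$ at $z=y_0/\epsilon$. Using the supersolution property of $w^\epsilon$ for (\ref{cellnorm}), inserting the first and second derivatives of this test function, the boundedness of $A_k$ from (\ref{matsquare})--(\ref{lipsigma}), $\phi\in C^2$, and (\ref{hamcon}) to replace $p_0$ by $D\phi(y_0,s_0)$ (with error controlled by $|D\phi(y_0,s_0)-p_0|$), one obtains at $(y_0,s_0)$ an inequality whose leading terms are $\psi_t-\epsilon\tr(A_k(y_0/\epsilon,\omega)D^2\psi)+H_k(D\psi,\hat{r}_0,(v^\epsilon_k-v^\epsilon_j)(y_0/\epsilon),y_0/\epsilon,\omega)$ bounded below by $-\epsilon v^\epsilon_1(0,\omega)-\epsilon w^\epsilon_k(y_0/\epsilon,\omega)$ plus lower-order corrections. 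Feeding in $-\epsilon v^\epsilon_1(0,\omega)\to\overline H(p_0,r_0)=\theta-\phi_t(x_0,t_0)$, the uniform limit $\epsilon w^\epsilon_k(y_0/\epsilon,\omega)\to0$, and $\phi^\epsilon(y_0,s_0)\to\hat{r}_0$ (so (\ref{hamcon}) lets $\phi^\epsilon(y_0,s_0)$ sit in the $r$-slot), and noting that the $s$-slot matches since $\epsilon^{-1}(\phi^\epsilon_k-\phi^\epsilon_j)(y_0,s_0)=(v^\epsilon_k-v^\epsilon_j)(y_0/\epsilon)$, we conclude that, for $\epsilon$ small and $(y_0,s_0)$ in a small cylinder $\bar Q=\bar B_\rho(x_0)\times[t_0-\rho,t_0+\rho]\subset\{t>0\}$ on which $(x_0,t_0)$ is the strict maximum of $\tilde u^*-\phi$, the function $\phi^\epsilon$ is a strict supersolution of (\ref{colend}).

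Since each $u^\epsilon$ solves (\ref{colend}), the comparison principle for the monotone parabolic system (\ref{colend}) (see \cite{IK,CIL}) then forces, for all small $\epsilon$, $\max_{1\le k\le m}\max_{\bar Q}(u^\epsilon_k-\phi^\epsilon_k)=\max_{1\le k\le m}\max_{\partial_p\bar Q}(u^\epsilon_k-\phi^\epsilon_k)$, where $\partial_p\bar Q=(\partial B_\rho(x_0)\times[t_0-\rho,t_0+\rho])\cup(\bar B_\rho(x_0)\times\{t_0-\rho\})$ is the parabolic boundary, since a maximum of $u^\epsilon_k-\phi^\epsilon_k$ at an interior point would, through the monotone coupling, violate the strict supersolution inequality. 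Using $\phi^\epsilon_k\to\phi$ uniformly on $\bar Q$ and the identity $\limsup_{\epsilon\to0}\sup_{\bar Q}\max_k(u^\epsilon_k-\phi^\epsilon_k)=\sup_{\bar Q}(u^*-\phi)$ (a consequence of the definition (\ref{star}) of $u^*$ together with the uniform convergence of the $\phi^\epsilon_k$), and $u^*=\tilde u^*$ on $\{t>0\}$, we pass to the limit and get $\max_{\bar Q}(\tilde u^*-\phi)=\max_{\partial_p\bar Q}(\tilde u^*-\phi)$, contradicting that the maximum is strict and attained at the interior point $(x_0,t_0)$. Hence $\tilde u^*$ is a subsolution on $\mathbb{R}^n\times(0,\infty)$; as $\tilde u^*(\cdot,0)=\underline{u}_0$ by definition and $\tilde u^*\in\USC(\mathbb{R}^n\times[0,\infty))$ by (\ref{starup}), the initial condition holds. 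The supersolution property of $u_*$ follows by the mirror argument: at a strict local minimum $(x_0,t_0)$, $t_0>0$, of $u_*-\phi$ with $\phi_t(x_0,t_0)+\overline H(D\phi(x_0,t_0),u_*(x_0,t_0))<0$, the same corrector (now $v^\epsilon_k$ used as a subsolution against a test touching $\phi^\epsilon_k$ from above) makes $\phi^\epsilon$ a strict subsolution of (\ref{colend}) near $(x_0,t_0)$, and comparison against the supersolutions $u^\epsilon$, with the analogous $\liminf$ identity, gives the contradiction; finally Proposition \ref{altcon1} yields $\min_k u^\epsilon_k\ge\underline{u}_0-\eta-C_\eta t$ uniformly in $\epsilon$ for every $\eta>0$, whence $u_*(\cdot,0)\ge\underline{u}_0-\eta$ and, $\eta$ being arbitrary, $u_*(\cdot,0)\ge\underline{u}_0$.

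The main obstacle is the corrector step. In contrast with the scalar case, one must verify $\epsilon w^\epsilon_k(\cdot/\epsilon,\omega)\to0$ uniformly on bounded sets for \emph{every} component $k$, which requires combining the uniform convergence of $\delta v^\delta_1$ on the dilating balls $B_{R/\delta}$ (Proposition \ref{colupgrade}) with the sublinear-in-radius growth of the difference bound in Proposition \ref{cellsol}; and one must track the monotone coupling so that $\phi^\epsilon$ is a strict super/subsolution of the genuinely coupled system (\ref{colend}) --- using (\ref{hamcon}) and $\phi^\epsilon\to\hat{r}_0$ to handle the $r$-slot and the exact identity $\epsilon^{-1}(\phi^\epsilon_k-\phi^\epsilon_j)=(v^\epsilon_k-v^\epsilon_j)(\cdot/\epsilon)$ for the $s$-slot --- rather than only of a decoupled scalar approximation.
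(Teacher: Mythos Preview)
Your proof is correct and follows the same route as the paper: perturbed test functions built from the normalized correctors $w^\epsilon$, the refined identification of $\overline H$ from Proposition \ref{colupgrade}, comparison on a small parabolic cylinder, and Propositions \ref{altcon}--\ref{altcon1} for the initial trace. You are in fact more explicit than the paper about the $r$-slot and $s$-slot bookkeeping and about why the convergence $\phi^\epsilon_k\to\phi$ must be checked for every component $k$.

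One remark on that last point. You justify $\epsilon(v^\epsilon_k-v^\epsilon_1)(\cdot/\epsilon,\omega)\to0$ by asserting that the difference bound in Proposition \ref{cellsol} ``grows at most sublinearly in the radius (as the Morrey step in that proof shows)''. The Morrey argument as written does not obviously give sublinear growth; the clean reason is stationarity. Since Proposition \ref{cellsol} records that $v^\delta$ is stationary, the difference $v^\delta_k-v^\delta_j$ is stationary as well, and the bound $\|v^\delta_k-v^\delta_j\|_{L^\infty(B_1)}\le C(p,r,n)$ transfers, by translating $\omega\mapsto\tau_z\omega$, to $\|v^\delta_k-v^\delta_j\|_{L^\infty(\mathbb{R}^n)}\le C(p,r,n)$ for every $\omega$. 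Hence $\epsilon(v^\epsilon_k-v^\epsilon_1)(y_0/\epsilon,\omega)=O(\epsilon)$ uniformly, which is what you need. The paper is itself terse here (it cites Proposition \ref{subsolution} for $\phi^\epsilon\to\phi$ and then Proposition \ref{colupgrade} for $|\epsilon w^\epsilon_k(y_0/\epsilon,\omega)|<\delta/4$ without isolating the $k\neq1$ issue), so your attention to this step is well placed; just replace the Morrey rationale by the stationarity one.
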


\begin{proof}  Propositions \ref{altcon} and \ref{altcon1} yield that, for each $\omega\in\Omega$ and $\delta>0$, $$\begin{array}{ll} \underline{u}_0-\delta\leq u_*\leq \tilde{u}^*=\underline{u}_0 & \textrm{on}\;\;\;\mathbb{R}^n\times\left\{0\right\}\end{array}$$ and, therefore, $u_*=\tilde{u}^*=\underline{u}_0$ on $\mathbb{R}^n\times\left\{0\right\}$.

The remainder of the proof follows by the standard perturbed test function method.  Fix $\omega\in\Omega_3$.  We argue by contradiction and assume that $\tilde{u}^*-\phi$ has a strict local maximum at $(x_0,t_0)$ for $\phi\in\C^2(\mathbb{R}^n\times(0,\infty))$ satisfying \begin{equation}\label{perturb}\begin{array}{lll} \phi(x_0,t_0)=\tilde{u}^*(x_0,t_0) & \textrm{and} & \phi_t(x_0,t_0)+\overline{H}(D\phi(x_0,t_0),\phi(x_0,t_0))=\delta>0. \end{array}\end{equation}

Define \begin{equation*}\label{perturb1}\begin{array}{lll} \phi^\epsilon=\left(\phi^\epsilon_1,\ldots,\phi^\epsilon_m\right) & \textrm{with} & \phi^\epsilon_k(x,t)=\phi(x,t)+\epsilon w^\epsilon_k(\frac{x}{\epsilon},\omega), \end{array}\end{equation*} where $w^\epsilon$ is the solution of (\ref{cellnorm}) corresponding to the pair $(D\phi(x_0,t_0),\phi(x_0,t_0))$.  Notice that, by Proposition \ref{subsolution}, since $\omega\in\Omega_3\subset\Omega_1$, as $\epsilon\rightarrow0$, \begin{equation}\label{perturb2}\begin{array}{ll} \phi^\epsilon\rightarrow \phi & \textrm{locally uniformly on}\;\;\;\mathbb{R}^n\times(0,\infty).\end{array}\end{equation}  We now prove that there exists $r>0$ such that, for all $\epsilon$ sufficiently small, $\phi^\epsilon$ is a supersolution of (\ref{colend}) on $B_r(x_0)\times(t_0-r,t_0+r)$.

Fix $\epsilon>0$.  Suppose that, for some $k\in\left\{1,\ldots,m\right\}$ and $\eta\in\C^2(\mathbb{R}^n\times(0,\infty))$, $\phi^\epsilon_k-\eta$ has a strict local minimum at $(y_0,s_0)$.  Then, the rescaled function \begin{equation*}\label{perturb3} w^\epsilon_k(y,\omega)-\frac{1}{\epsilon}(\eta(\epsilon y,\epsilon s)-\phi(\epsilon y, \epsilon s))\end{equation*} has a strict local minimum at $(\frac{y_0}{\epsilon},\frac{s_0}{\epsilon})$ with \begin{equation}\label{perturb6} \phi_t(y_0,s_0)=\eta_t(y_0,s_0). \end{equation}  Since $w^\epsilon$ is a solution of (\ref{cellnorm}) we have, after rescaling and evaluated at $(y_0,s_0)$, \begin{equation*}\label{perturb4} \epsilon w^\epsilon_k-\epsilon\tr(A_k(\frac{y_0}{\epsilon},\omega)(D^2\eta-D^2\phi))+H_k(D\eta-D\phi+D\phi(x_0,t_0),\widehat{\phi(x_0,t_0)},\frac{\phi^\epsilon_k-\phi^\epsilon_j}{\epsilon},\frac{y}{\epsilon},\omega)\geq -\epsilon v^\epsilon_1(0).\end{equation*}

It follows from Proposition \ref{colupgrade}, (\ref{perturb}) and because $\omega\in\Omega_3\subset\Omega_1$ that, for all $\epsilon>0$ sufficiently small, $$\begin{array}{lll} \abs{\epsilon w^\epsilon_k(\frac{y_0}{\epsilon},\omega)}<\frac{\delta}{4} & \textrm{and} & \phi_t(x_0,t_0)-\epsilon v^\epsilon_1(0)>\frac{\delta}{2}, \end{array}$$ and, hence, in view of Proposition \ref{cellnormsol}, (\ref{perturb6}), (\ref{matsquare}), (\ref{lipsigma}), (\ref{hamcon}) and because $\phi\in \C^2(\mathbb{R}^n\times (0,\infty))$, there exists $r>0$ independent of $\eta$ such that, for all $\epsilon>0$ sufficiently small, whenever $\abs{x_0-y_0}<r$, \begin{equation*}\label{perturb5} \eta_t-\epsilon\tr(A_k(\frac{y}{\epsilon},\omega)D^2\eta)+H_k(D\eta,\widehat{\phi(x_0,t_0)},\frac{\phi^\epsilon_k-\phi^\epsilon_j}{\epsilon},\frac{y}{\epsilon},\omega)>0. \end{equation*}

We conclude that, for all $\epsilon>0$ sufficiently small, $\phi^\epsilon$ is a supersolution of (\ref{colend}) on $B_r(x_0)\times(t_0-r,t_0+r)$.  The comparison principle yields, for all $\epsilon$ sufficiently small, \begin{equation*}\label{perturb7} \max_{1\leq k\leq m}\max_{\overline{B}_r(x_0)\times[t_0-r,t_0+r]}\left(u^\epsilon_k-\phi^\epsilon_k\right)=\max_{1\leq k\leq m}\max_{\partial(\overline{B}_r(x_0)\times[t_0-r,t_0+r])}\left(u^\epsilon_k-\phi^\epsilon_k\right).\end{equation*}  By (\ref{perturb2}) and standard optimization results (See \cite{CIL}), this contradicts the assumption that $\tilde{u}^*-\phi$ has a strict local maximum at $(x_0,t_0)$.

The proof that $u_*$ is a supersolution is analogous.\end{proof}

We now present our main result.  In the proof, we first deduce the result for initial conditions $u_0\in\C^{1,1}(\mathbb{R}^n;\mathbb{R}^m)$ and then prove the result for general $u_0\in\BUC(\mathbb{R}^n;\mathbb{R}^m)$ by approximation.

\begin{prop}\label{homogenization}  Assume (\ref{steady}).  For each $\omega\in\Omega_3$, the solutions $u^\epsilon$ of (\ref{colend}) satisfy, as $\epsilon\rightarrow 0$, for each $k\in\left\{1,\ldots,m\right\}$, \begin{equation}\label{homogenization1}\begin{array}{ll} u^\epsilon_k\rightarrow \overline{u} & \textrm{locally uniformly on}\;\;\;\mathbb{R}^n\times(0,\infty),\end{array}\end{equation} for $\overline{u}$ the solution of the scalar equation \begin{equation}\label{homogenization2} \left\{\begin{array}{ll} \overline{u}_t+\overline{H}(D\overline{u},\overline{u})=0 & \textrm{on}\;\;\;\mathbb{R}^n\times(0,\infty), \\ \overline{u}=\underline{u}_0 & \textrm{on}\;\;\;\mathbb{R}^n\times\left\{0\right\}.\end{array}\right.\end{equation} \end{prop}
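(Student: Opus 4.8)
The plan is to first prove (\ref{homogenization1}) for $u_0\in\C^{1,1}(\mathbb{R}^n;\mathbb{R}^m)$ by means of the half-relaxed limits $u^*$ and $u_*$, and then to pass to general $u_0\in\BUC(\mathbb{R}^n;\mathbb{R}^m)$ by approximation using the contraction estimates (\ref{contraction}) and (\ref{homcontraction}).

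For smooth data I would fix $\omega\in\Omega_3$ with $u_0\in\C^{1,1}(\mathbb{R}^n;\mathbb{R}^m)$. Proposition \ref{uniformbound} makes $\{u^\epsilon\}_{\epsilon>0}$ bounded on $\mathbb{R}^n\times[0,T]$ for every $T>0$, so the half-relaxed limit $u^*$ from (\ref{star}) and its lower counterpart $u_*$ are well defined and bounded on each such slab, with $u_*\leq u^*$ everywhere and $\tilde{u}^*=u^*$ for $t>0$. Proposition \ref{supstarsub} gives that $\tilde{u}^*\in\USC(\mathbb{R}^n\times[0,\infty))$ is a subsolution and $u_*\in\LSC(\mathbb{R}^n\times[0,\infty))$ a supersolution of (\ref{homogenization2}), both attaining $\underline{u}_0$ at $t=0$. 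Let $\overline{u}\in\BUC(\mathbb{R}^n\times[0,T))$ be the solution of (\ref{homogenization2}) from Proposition \ref{effsol}. Since (\ref{effcon}), (\ref{effcoercive}) and (\ref{effmonotone}) yield a comparison principle for (\ref{homogenization2}) in the class of bounded functions (see \cite{CIL}), applying it to $(\tilde{u}^*,\overline{u})$ and to $(\overline{u},u_*)$ gives
$$
\tilde{u}^*\leq\overline{u}\leq u_*\qquad\textrm{on}\;\;\mathbb{R}^n\times[0,T).
$$
As $T$ is arbitrary and $u_*\leq u^*=\tilde{u}^*$ on $\mathbb{R}^n\times(0,\infty)$, the three functions coincide there: $u^*=u_*=\overline{u}$ on $\mathbb{R}^n\times(0,\infty)$. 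By the standard half-relaxed limits argument, $u^*=u_*$ is equivalent to the locally uniform convergence on $\mathbb{R}^n\times(0,\infty)$ of $\max_{1\leq k\leq m}u^\epsilon_k$ and $\min_{1\leq k\leq m}u^\epsilon_k$ to the common value $\overline{u}$, hence of each $u^\epsilon_k$ to $\overline{u}$, which is (\ref{homogenization1}) for $\C^{1,1}$ data.

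For general $u_0\in\BUC(\mathbb{R}^n;\mathbb{R}^m)$ I would choose $u_0^\nu\in\C^{1,1}(\mathbb{R}^n;\mathbb{R}^m)$ with $\max_{1\leq k\leq m}\norm{u_{k,0}-u_{k,0}^\nu}_{L^\infty(\mathbb{R}^n)}\to 0$, and let $u^{\epsilon,\nu}$, $\overline{u}^\nu$ solve (\ref{colend}) and (\ref{homogenization2}) with data $u_0^\nu$ and $\underline{u}_0^\nu=\min_{1\leq k\leq m}u_{k,0}^\nu$. Then (\ref{contraction}) bounds $\max_k\norm{u^\epsilon_k-u^{\epsilon,\nu}_k}_{L^\infty(\mathbb{R}^n\times[0,\infty))}$ by $\max_k\norm{u_{k,0}-u_{k,0}^\nu}_{L^\infty(\mathbb{R}^n)}$ uniformly in $\epsilon$; since $\underline{u}_0^\nu\to\underline{u}_0$ uniformly, (\ref{homcontraction}) gives $\norm{\overline{u}^\nu-\overline{u}}_{L^\infty(\mathbb{R}^n\times[0,\infty))}\to 0$; and the smooth case applies to each $u^{\epsilon,\nu}$. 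A routine three-term estimate then produces $u^\epsilon_k\to\overline{u}$ locally uniformly on $\mathbb{R}^n\times(0,\infty)$ for every $k$.

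The genuinely hard inputs — the initial boundary layer, the identification of the limiting datum $\underline{u}_0$, and the refined characterization (\ref{enhance2}) of $\overline{H}$ powering the perturbed test function argument — have already been handled in Propositions \ref{altcon}, \ref{altcon1}, \ref{colupgrade} and \ref{supstarsub}. The only remaining subtlety here is the bookkeeping for half-relaxed limits taken as a $\max$, respectively $\min$, over the components $u^\epsilon_k$: one must check that their coincidence forces every individual component to converge to $\overline{u}$ on $\mathbb{R}^n\times(0,\infty)$, which relies on the collapse estimate (\ref{starup}) and on Proposition \ref{supstarsub}.
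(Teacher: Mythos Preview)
Your proposal is correct and follows essentially the same approach as the paper: first treat $u_0\in\C^{1,1}$ via the half-relaxed limits $\tilde{u}^*$, $u_*$ and the comparison principle for (\ref{homogenization2}) to force $u^*=u_*=\overline{u}$ on $\mathbb{R}^n\times(0,\infty)$, then approximate general $u_0\in\BUC$ using the contractions (\ref{contraction}) and (\ref{homcontraction}) in a three-term estimate. The only cosmetic difference is that the paper compares $\tilde{u}^*$ and $u_*$ directly, whereas you route both comparisons through $\overline{u}$; the content is the same.
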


\begin{proof}  Suppose that $u_0\in\C^{1,1}(\mathbb{R}^n;\mathbb{R}^m)$.  Then, in view of Proposition \ref{supstarsub} and the comparison principle, $\tilde{u}^*\in\USC(\mathbb{R}^n\times[0,\infty))$ and $u_*\in\LSC(\mathbb{R}^n\times[0,\infty))$ satisfy, for each $\omega\in\Omega_3$, $$\begin{array}{ll} \tilde{u}^*\leq u_* & \textrm{on}\;\;\;\mathbb{R}^n\times[0,\infty).\end{array}$$  Since the opposite inequality follows by definition, we conclude that, for each $\omega\in\Omega_3$, $\tilde{u}^*=u_*$ is the unique solution of (\ref{homogenization2}) which, by standard properties of viscosity solutions, implies (\ref{homogenization1}).

We now consider $u_0\in\BUC(\mathbb{R}^n;\mathbb{R}^m)$.  Define by convolution functions $u^\delta_0=(u^\delta_{1,0},\ldots,u^\delta_{m,0})\in\C^{1,1}(\mathbb{R}^n;\mathbb{R}^m)$ satisfying $$\max_{1\leq k\leq m}\norm{u^\delta_{k,0}-u_{k,0}}_{L^\infty(\mathbb{R}^n)}<\delta.$$  Let $u^\epsilon$ be the solution of (\ref{colend}) with initial condition $u_0$ and let $u^{\epsilon,\delta}$ be the solution with initial condition $u^\delta_0$.  Similarly, let $\overline{u}$ be the solution of (\ref{eqend}) with initial condition $\underline{u}_0$ and let $\overline{u}^\delta$ be the solution with initial condition $\underline{u}^\delta_0$.

Fix a compact set $K\subset\mathbb{R}^n\times(0,\infty)$, $\eta>0$ and $\omega\in\Omega_3$.  We have $$\max_{1\leq k\leq m}\norm{u^\epsilon_k-\overline{u}}_{L^\infty(K)}\leq \max_{1\leq k\leq m}(\norm{u^\epsilon_k-u^{\epsilon,\delta}_k}_{L^\infty(K)}+\norm{u^{\epsilon,\delta}_k-\overline{u}^\delta}_{L^\infty(K)}+\norm{\overline{u}^\delta-\overline{u}}_{L^\infty(K)})$$ which, in view of (\ref{contraction}) and (\ref{homcontraction}), yields, for $\delta=\frac{\eta}{2}$, $$\max_{1\leq k\leq m}\norm{u^\epsilon_k-\overline{u}}_{L^\infty(K)}< \eta+\max_{1\leq k\leq m}\norm{u^{\epsilon,\frac{\eta}{2}}_k-\overline{u}^{\frac{\eta}{2}}}_{L^\infty(K)}.$$

Since, for each $\delta>0$, $u^\delta_0\in \C^{1,1}(\mathbb{R}^n;\mathbb{R}^m)$, the above implies $$\begin{array}{lll} \lim_{\epsilon\rightarrow 0}\max_{1\leq k\leq m}\norm{u^{\epsilon,\frac{\eta}{2}}_k-\overline{u}^\frac{\eta}{2}}_{L^\infty(K)}=0 & \textrm{and, hence,} & \limsup_{\epsilon\rightarrow 0}\max_{1\leq k\leq m}\norm{u^\epsilon_k-\overline{u}}_{L^\infty(K)}<\eta.\end{array}$$  Because $K\subset\mathbb{R}^n\times(0,\infty)$, $\eta>0$ and $\omega\in\Omega_3$ were arbitrary this completes the proof.  \end{proof}

\section{Variations}

We conclude the paper with a summary of the results for related systems.  Because the proofs are either simpler or identical to those presented above we omit them.  The system \begin{equation}\label{endnocol}\left\{\begin{array}{ll} u^\epsilon_{k,t}-\epsilon\tr(A_k(\frac{x}{\epsilon},\omega)D^2\ue)+H_k(D\ue,u^\epsilon,\frac{x}{\epsilon},\omega)=0 & \textrm{on}\;\;\; \mathbb{R}^n\times(0,\infty), \\ u^\epsilon=u_0 & \textrm{on}\;\;\;\mathbb{R}^n\times\left\{0\right\},\end{array}\right.\end{equation} where the coefficients satisfy assumptions identical to those presented in Section 2, differs from (\ref{colend}) in that the Hamiltonians are no longer coercive in the differences $\epsilon^{-1}(u^\epsilon_k-u^\epsilon_j)$.  The result is therefore the homogenization of (\ref{endnocol}) to a deterministic system.

\begin{prop}\label{nocolresult}  Assume (\ref{steady}).  There exist deterministic Hamiltonians $\overline{H}_k(p,r)$ and $\Omega'\subset\Omega$ of full probability such that, for each $\omega\in\Omega'$, the solutions $u^\epsilon$ of (\ref{endnocol}) satisfy, as $\epsilon\rightarrow 0$, for each $k\in\left\{1,\ldots,m\right\}$, $$\begin{array}{ll} u^\epsilon_k\rightarrow\overline{u}_k & \textrm{locally uniformly on}\;\;\;\mathbb{R}^n\times[0,\infty),\end{array}$$ for $\overline{u}=(\overline{u}_1,\ldots,\overline{u}_m)$ the solution of the system $$\left\{\begin{array}{ll} \overline{u}_{k,t}+\overline{H}_k(D\overline{u}_k,\overline{u})=0 & \textrm{on}\;\;\;\mathbb{R}^n\times(0,\infty),\\ \overline{u}=u_0 & \textrm{on}\;\;\;\mathbb{R}^n\times\left\{0\right\}.\end{array}\right.$$\end{prop}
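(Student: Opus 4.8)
The plan is to retrace Sections 3--9, exploiting two simplifications caused by the fact that (\ref{endnocol}) is neither coercive in nor coupled through the differences $\epsilon^{-1}(u^\epsilon_k-u^\epsilon_j)$. First I would study, for each $(p,r)\in\mathbb{R}^n\times\mathbb{R}^m$, $\delta>0$ and $\omega\in\Omega$, the approximate macroscopic equations
\begin{equation*} \delta v^\delta_k-\tr(A_k(y,\omega)D^2 v^\delta_k)+H_k(p+D v^\delta_k,r,y,\omega)=0 \qquad\textrm{on}\;\;\mathbb{R}^n, \end{equation*}
which are now \emph{decoupled} scalar viscous Hamilton--Jacobi equations with $r\in\mathbb{R}^m$ a frozen parameter. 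The uniform bounds on $\delta v^\delta_k$ and $D v^\delta_k$ from Proposition \ref{cellsol} carry over verbatim (the estimate on the differences is no longer needed), and I would set $\overline{H}_k(p,r)=\limsup_{\delta\rightarrow 0}-\delta v^\delta_k(0,\omega)$, using stationarity and the analogue of Proposition \ref{contdep} to conclude that each $\overline{H}_k$ is deterministic.

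Next I would repeat Sections 4--8 for each index $k$ separately: construct a sublinear subsolution of the $k$-th macroscopic equation (Proposition \ref{subsolution}), introduce the scalar metric functions $m_{k,\mu}$ and prove their homogenization (Sections 6--7, which reduce here to the scalar analysis of \cite{AS}), and upgrade to the uniform convergence
\begin{equation*} \lim_{\delta\rightarrow 0}\sup_{y\in B_{R/\delta}}\abs{\overline{H}_k(p,r)+\delta v^\delta_k(y,\omega)}=0 \end{equation*}
on a subset of full probability (Proposition \ref{colupgrade}). Since the parameter $r$ enters only as an inert argument, the monotonicity (\ref{hamincrease})--(\ref{hamdecrease}) passes to $\overline{H}_k$ (nondecreasing in $r_k$, nonincreasing in $r_i$ for $i\neq k$), as do the coercivity (\ref{coercive}) in $p$ and the local Lipschitz continuity (\ref{hamcon}); these properties then yield the well-posedness of the limiting monotone system by the comparison principle of \cite{CIL,IK}.

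The place where (\ref{endnocol}) is genuinely easier is the a priori control of $u^\epsilon$. Because the terms $\epsilon^{-1}(u^\epsilon_k-u^\epsilon_j)$ are absent, the Hamiltonians are bounded on bounded sets uniformly in $\epsilon$, so the argument of Proposition \ref{time} gives $\abs{u^\epsilon_{k,t}}\leq C$ uniformly in $\epsilon$ for $u_0\in C^{1,1}(\mathbb{R}^n;\mathbb{R}^m)$ (the quantity $M_\epsilon$ there plays no role, since $H_k$ no longer depends on the differences), and Bernstein's method (Proposition \ref{grad}) then gives $\abs{Du^\epsilon_k}\leq C$ uniformly in $\epsilon$. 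Hence there is no initial boundary layer: the half-relaxed limits are equi-Lipschitz up to $t=0$ and agree there with $u_{k,0}$, so no modification of the sort $\tilde u^*$ is needed and Propositions \ref{altcon}--\ref{altcon1} are not required. The perturbed test function argument of Proposition \ref{supstarsub}, freezing $\overline u$ at the full vector $\phi(x_0,t_0)$ and using the correctors $w^\epsilon_k$ associated with that value of $r$, then shows that the upper and lower half-relaxed limits are respectively a subsolution and a supersolution of the $k$-th limiting equation; comparison forces them to coincide, giving the stated locally uniform convergence with $\overline u=u_0$ at $t=0$. Finally, approximating a general $u_0\in\BUC(\mathbb{R}^n;\mathbb{R}^m)$ by $C^{1,1}$ data and using the contraction estimate (\ref{contraction}) together with its analogue for the limit system removes the regularity hypothesis, with $\Omega'=\bigcap_{(p,r)\in\mathbb{Q}^n\times\mathbb{Q}^m}\Omega_3(p,r)$.

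The main obstacle is purely organizational: one must check that every estimate of Sections 3--8, originally stated for a scalar $r$ and in the presence of the difference coupling, survives when $r$ ranges over all of $\mathbb{R}^m$ and the $s$-dependence is dropped --- in particular the metric comparison principle (Proposition \ref{colmetriccomp}) and the characterization of $\min_p\overline{H}_k$ (Proposition \ref{colmin}) for each decoupled scalar problem. None of these is harder than in the collapsing case, which is why the detailed argument is omitted.
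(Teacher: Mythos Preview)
Your sketch is correct and matches what the paper intends: the paper omits the proof of this proposition entirely, stating only that ``the proofs are either simpler or identical to those presented above,'' and your outline faithfully carries this out by noting that the macroscopic problems decouple into scalar equations indexed by a frozen parameter $r\in\mathbb{R}^m$ (so Sections~3--8 reduce to the scalar analysis of \cite{AS}) and that the absence of the difference coupling yields uniform-in-$\epsilon$ Lipschitz bounds on $u^\epsilon$, eliminating the initial boundary layer. One small point of imprecision: when you write ``freezing $\overline u$ at the full vector $\phi(x_0,t_0)$,'' you presumably mean freezing $r$ at the vector $u^*(x_0,t_0)\in\mathbb{R}^m$ (whose $k$-th entry equals $\phi(x_0,t_0)$); with the uniform Lipschitz bounds in hand, subsequential locally uniform convergence of the full vector $u^\epsilon$ makes this substitution legitimate and the perturbed test function argument for the $k$-th component goes through as a scalar argument with $u$ appearing as a known continuous coefficient.
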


Our methods also apply to the analogous time-independent systems, \begin{equation}\label{colnot} \begin{array}{ll} -\epsilon\tr(A_k(\frac{x}{\epsilon},\omega)D^2\ue)+H_k(D\ue,u^\epsilon,\frac{u^\epsilon_k-u^\epsilon_j}{\epsilon},\frac{x}{\epsilon},\omega)=0 & \textrm{on}\;\;\; \mathbb{R}^n,\end{array}\end{equation}
and, \begin{equation}\label{nocolnot}\begin{array}{ll} -\epsilon\tr(A_k(\frac{x}{\epsilon},\omega)D^2\ue)+H_k(D\ue,u^\epsilon,\frac{x}{\epsilon},\omega)=0 & \textrm{on}\;\;\; \mathbb{R}^n,\end{array}\end{equation} where the coefficients satisfy assumptions identical to those presented in Section 2.  Furthermore, we assume that there exists $\lambda>0$ such that, for $r,q\in\mathbb{R}^m$, if $r_k-q_k=\max_{1\leq i\leq m}\abs{r_i-q_i}$ then, for each $(p,s,y,\omega)$, \begin{equation}\label{notstrict} H_k(p,r,s,y,\omega)-H_k(p,q,s,y,\omega)\geq\lambda(r_k-q_k). \end{equation}  The following two propositions summarize, respectively, the main results for (\ref{colnot}) and (\ref{nocolnot}).

\begin{prop}\label{colnotresult}  Assume (\ref{steady}) and (\ref{notstrict}).  There exists a deterministic Hamiltonian $\overline{H}(p,r)$ and $\Omega'\subset\Omega$ of full probability such that, for each $\omega\in\Omega'$, the solutions $u^\epsilon$ of (\ref{colnot}) satisfy, as $\epsilon\rightarrow 0$, for each $k\in\left\{1,\ldots,m\right\}$, $$\begin{array}{ll} u^\epsilon_k\rightarrow\overline{u} & \textrm{locally uniformly on}\;\;\;\mathbb{R}^n,\end{array}$$ for $\overline{u}$ the solution of the scalar equation $$\begin{array}{ll}\overline{H}(D\overline{u},\overline{u})=0 & \textrm{on}\;\;\;\mathbb{R}^n.\end{array}$$\end{prop}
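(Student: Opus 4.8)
The plan is to follow the argument for the collapsing time-dependent system (\ref{colend}) essentially verbatim, exploiting that removing the time variable --- and with it the initial boundary layer --- makes everything strictly simpler, the extra hypothesis (\ref{notstrict}) being precisely what compensates for that loss of structure.

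\textbf{Step 1 (well-posedness and a uniform bound).} For each fixed $\epsilon>0$, the existence and uniqueness of $u^\epsilon\in\BUC(\mathbb{R}^n;\mathbb{R}^m)$ solving (\ref{colnot}) follow from Perron's method and the comparison principle as in \cite{IK}, monotonicity being supplied by (\ref{hamincrease})--(\ref{hamdecrease}) and (\ref{notstrict}). The new point is a bound $\max_{1\le k\le m}\norm{u^\epsilon_k}_{L^\infty(\mathbb{R}^n)}\le C$ \emph{uniform in} $\epsilon$: using (\ref{notstrict}), (\ref{bounded}) and (\ref{hamincrease})--(\ref{hamdecrease}) one checks that, for $C$ large, the constant vectors $\widehat{\pm C}$ are respectively a supersolution and a subsolution of (\ref{colnot}), and comparison gives the claim. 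No uniform-in-$\epsilon$ Lipschitz bound is available --- for the reason illustrated by (\ref{introex}) --- which is why half-relaxed limits must be used below.

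\textbf{Step 2 (the effective Hamiltonian).} The approximate macroscopic system (\ref{cell}) is already time-independent, so Sections 3--5 apply without change: one obtains a deterministic $\overline{H}(p,r)=\limsup_{\delta\to0}-\delta v^\delta_1(0,\omega)$ satisfying (\ref{effcon}), (\ref{effcoercive}), (\ref{effmonotone}) and (\ref{effconvex}). The crucial additional fact is that (\ref{notstrict}) propagates to \emph{strict} monotonicity of $\overline{H}$ in $r$: for $r_2\ge r_1$, subtract $\delta^{-1}\lambda(r_2-r_1)$ from the solution of (\ref{cell}) for $(p,r_1)$; since $\widehat{r}_1,\widehat{r}_2$ have all components equal, (\ref{notstrict}) applies and shows the result is a supersolution of (\ref{cell}) for $(p,r_2)$, whence $\overline{H}(p,r_2)-\overline{H}(p,r_1)\ge\lambda(r_2-r_1)$. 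Consequently $\overline{H}(Du,u)=0$ on $\mathbb{R}^n$ is proper and coercive, hence admits a comparison principle among bounded sub- and supersolutions and a unique bounded solution $\overline{u}$ (see \cite{CIL}). The metric-system analysis of Sections 6--7 and the identification of Section 8 carry over word for word, so there is a set $\Omega_3\subset\Omega$ of full probability (the desired $\Omega'$) on which Proposition \ref{colupgrade} holds, i.e.\ $\lim_{\delta\to0}\sup_{y\in B_{R/\delta}}\abs{\delta v^\delta_1(y,\omega)+\overline{H}(p,r)}=0$ for all $(p,r)$ and $R>0$.

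\textbf{Step 3 (homogenization).} For $\omega\in\Omega_3$ define the half-relaxed limits $u^*(x,\omega)=\limsup_{\epsilon\to0}\max_{1\le k\le m}\{u^\epsilon_k(y,\omega):\abs{y-x}\le\epsilon\}$ and $u_*(x,\omega)=\liminf_{\epsilon\to0}\min_{1\le k\le m}\{u^\epsilon_k(y,\omega):\abs{y-x}\le\epsilon\}$, which are finite by Step 1 and satisfy $u_*\le u^*$ by construction. Repeating the perturbed test function argument of Proposition \ref{supstarsub} --- now with no initial data to match, the correctors being the solutions $w^\epsilon$ of (\ref{cellnorm}), the error $\delta v^\delta_1(0,\omega)+\overline{H}$ controlled by Proposition \ref{colupgrade}, and the collapsing terms $\epsilon^{-1}(u^\epsilon_k-u^\epsilon_j)$ carried along exactly as there via the coercivity (\ref{coercive}) in $s$ --- shows that $u^*$ is a subsolution and $u_*$ a supersolution of $\overline{H}(Du,u)=0$ on $\mathbb{R}^n$ for every $\omega\in\Omega_3$. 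The comparison principle from Step 2 then gives $u^*\le u_*$, hence $u^*=u_*=:\overline{u}$ is the unique bounded solution of the effective equation; the squeeze $\min_k u^\epsilon_k\to\overline{u}\leftarrow\max_k u^\epsilon_k$ forces $u^\epsilon_k\to\overline{u}$ locally uniformly for each $k$.

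\textbf{Main obstacle.} In the time-dependent collapsing case the genuinely new work was the identification of the relaxed initial datum $\underline{u}_0$ through Propositions \ref{altcon} and \ref{altcon1}; here that disappears and the whole burden shifts onto (\ref{notstrict}). The two delicate points are therefore (i) the uniform-in-$\epsilon$ $L^\infty$ bound of Step 1, without which the half-relaxed limits are meaningless, and (ii) the passage, in Step 2, from (\ref{notstrict}) to a comparison principle for the \emph{stationary} effective equation: a stationary Hamilton--Jacobi equation on all of $\mathbb{R}^n$ has no comparison principle without strict monotonicity in $u$, so (\ref{notstrict}) is indispensable and must be tracked carefully through the construction of $\overline{H}$. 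Once this is in place, the limit passage of Step 3 is routine, being a word-for-word simplification of Proposition \ref{supstarsub}.
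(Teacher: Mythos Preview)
Your proposal is correct and follows exactly the route the paper indicates: the paper omits the proof entirely, stating only that ``the proofs are either simpler or identical to those presented above,'' and your sketch supplies precisely such a simplification of the time-dependent argument, correctly identifying that (\ref{notstrict}) is needed both for the uniform $L^\infty$ bound on the $u^\epsilon$ and for the comparison principle of the stationary effective equation $\overline{H}(D\overline{u},\overline{u})=0$.

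One minor remark: your claim that ``no uniform-in-$\epsilon$ Lipschitz bound is available --- for the reason illustrated by (\ref{introex})'' is not quite apt here, since (\ref{introex}) is a time-dependent example whose blow-up stems from the initial boundary layer, which is absent in (\ref{colnot}); in fact the argument of Proposition~\ref{cellsol} (the $L^\infty$ bound on $v^\delta_k-v^\delta_j$ via coercivity in $s$, followed by Bernstein's method) adapts to (\ref{colnot}) and would yield uniform local gradient bounds. This does not affect the correctness of your argument, since the half-relaxed-limit approach works regardless.
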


\begin{prop}\label{nocolnotresult}  Assume (\ref{steady}) and (\ref{notstrict}).  There exist deterministic Hamiltonians $\overline{H}_k(p,r)$ and $\Omega'\subset\Omega$ of full probability such that, for each $\omega\in\Omega'$, the solutions $u^\epsilon$ of (\ref{nocolnot}) satisfy, as $\epsilon\rightarrow 0$, for each $k\in\left\{1,\ldots,m\right\}$, $$\begin{array}{ll} u^\epsilon_k\rightarrow\overline{u}_k & \textrm{locally uniformly on}\;\;\;\mathbb{R}^n,\end{array}$$ for $\overline{u}=(\overline{u}_1,\ldots,\overline{u}_m)$ the solution of the system $$\begin{array}{ll}\overline{H}_k(D\overline{u}_k,\overline{u})=0 & \textrm{on}\;\;\;\mathbb{R}^n.\end{array}$$\end{prop}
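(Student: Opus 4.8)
The plan is to run the program of Sections 3--9 in this degenerate, non-collapsing, time-independent setting, where two simplifications are available. First, since the Hamiltonians in (\ref{nocolnot}) carry no collapse term $\epsilon^{-1}(u^\epsilon_k-u^\epsilon_j)$, for each frozen pair $(p,r)\in\mathbb{R}^n\times\mathbb{R}^m$ the approximate macroscopic system splits into $m$ independent scalar viscous cell problems
\[
\delta v^\delta_k-\tr(A_k(y,\omega)D^2 v^\delta_k)+H_k(p+Dv^\delta_k,r,y,\omega)=0\quad\text{on }\mathbb{R}^n,
\]
so the analysis reduces to running the scalar theory of \cite{AS} once for each $k$. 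Second, (\ref{nocolnot}) has no time variable and no initial data, hence no initial boundary layer, so the obstructions that required Propositions \ref{time}--\ref{grad} and the half-relaxed-limit device of Section 9 are simply absent. I would first reproduce, componentwise, the results of Sections 3--4: the gradient and $\norm{\delta v^\delta_k}_{L^\infty(\mathbb{R}^n)}$ bounds, the sublinearly growing subsolution of the macroscopic equation, and the definition
\[
\overline{H}_k(p,r)=\limsup_{\delta\rightarrow 0}-\delta v^\delta_k(0,\omega),
\]
which is deterministic by stationarity and continuous dependence.

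Next I would record the properties of $\overline{H}_k$ by the comparison arguments of Section 5: local Lipschitz continuity in $(p,r)$ as in (\ref{effcon}), coercivity in $p$ inherited from (\ref{coercive}) (with the $s$-terms absent), and convexity in $p$ inherited from (\ref{gradconvex}). The point specific to the non-collapsing case is that the strict monotonicity (\ref{notstrict}) must be transferred to the limit: if $r_k-q_k=\max_{1\leq i\leq m}\abs{r_i-q_i}$ then $\overline{H}_k(p,r)-\overline{H}_k(p,q)\geq\lambda(r_k-q_k)$, which follows by freezing $p$ and comparing the corresponding families $v^\delta$ as in (\ref{effmonotone}). This is exactly the structural hypothesis (see Ishii--Koike \cite{IK}) that makes the limiting deterministic system $\overline{H}_k(D\overline{u}_k,\overline{u})=0$ admit a comparison principle, and hence a unique solution.

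Since the environment is unbounded, the upgrade from $y=0$ to $y\in B_{R/\delta}$ again requires the metric machinery. For each $k$ and $\mu>\overline{H}_k(p,r)$ I would introduce the scalar metric problem $-\tr(A_kD^2 m_{k,\mu})+H_k(p+Dm_{k,\mu},r,y,\omega)=\mu$ on $\mathbb{R}^n\setminus D_1(x)$ with $m_{k,\mu}=w_k-w_k(x)$ on $\partial D_1(x)$, establish its well-posedness (Propositions \ref{colmetriccomp}--\ref{metcontdep}), its homogenization to a one-homogeneous metric function by the subadditive ergodic theorem (Propositions \ref{sublimit}--\ref{colmetricsol}), the representation formula (\ref{colmetricformula}) and its consequence Proposition \ref{techlemma}, and finally, as in Propositions \ref{auxenhance} and \ref{colupgrade} together with the min-characterization of Proposition \ref{colmin}, the almost sure statement that for every $R>0$
\[
\lim_{\delta\rightarrow 0}\sup_{y\in B_{R/\delta}}\abs{\overline{H}_k(p,r)+\delta v^\delta_k(y,\omega)}=0,
\]
intersecting over rational $(p,r)\in\mathbb{Q}^n\times\mathbb{Q}^m$ by continuity. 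This is the scalar content of \cite{AS}, carried out $m$ times in parallel.

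Finally, Bernstein's method and comparison with affine functions, using (\ref{coercive}) and (\ref{notstrict}), give bounds for $\norm{u^\epsilon_k}_{L^\infty(\mathbb{R}^n)}$ and $\norm{Du^\epsilon_k}_{L^\infty(\mathbb{R}^n)}$ that are uniform in $\epsilon$ --- here, in contrast to the collapsing case, there is no obstruction because there is no time derivative and no data to reconcile. Passing to a subsequence, $u^\epsilon\rightarrow\overline{u}$ locally uniformly with $\overline{u}$ Lipschitz, and the perturbed test function argument of Proposition \ref{supstarsub}, applied to the $k$-th equation with the corrector $w^\epsilon_k$ built from the $k$-th cell problem at $(D\phi(x_0),\overline{u}(x_0))$ and with the components $u^\epsilon_j$, $j\neq k$, controlled via (\ref{hamincrease}), (\ref{hamdecrease}) and (\ref{hamcon}), shows $\overline{u}$ solves $\overline{H}_k(D\overline{u}_k,\overline{u})=0$ in the viscosity sense. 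The comparison principle from the second step forces uniqueness and hence convergence of the whole family, and $\Omega'$ is the resulting countable intersection of full-probability sets. The main obstacle, as in the collapsing problem, is the coupling through the $r$-variable: one must verify both that $\overline{H}_k$ retains the strict monotonicity (\ref{notstrict}) and that the perturbed test function method survives the presence of the other, convergent, components --- both handled by the monotonicity assumptions --- after which the argument is strictly easier than the one carried out above.
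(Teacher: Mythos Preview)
The paper omits the proof entirely, stating only that it is ``either simpler or identical to those presented above.'' Your outline is correct and is precisely the kind of simplification the paper has in mind: once the collapse variable $s$ is absent, the approximate macroscopic problem decouples into $m$ independent scalar cell problems parametrized by $(p,r)\in\mathbb{R}^n\times\mathbb{R}^m$, so Sections 3--8 reduce to the scalar theory of \cite{AS} run componentwise; (\ref{notstrict}) passes to $\overline{H}_k$ by the comparison argument of (\ref{effmonotone}) and yields well-posedness of the limiting system via \cite{IK}; and in the time-independent setting (\ref{notstrict}) gives uniform $L^\infty$ and, via Bernstein, uniform gradient bounds, so the boundary-layer apparatus of Section 9 is unnecessary and one may extract a limit directly before applying the perturbed test function method. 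The only point worth flagging is that in the perturbed test function step the $k$-th equation couples to the other components through $H_k(\cdot,u^\epsilon,\cdot)$, and you correctly note that this is handled by the locally uniform convergence of $u^\epsilon_j$ to $\overline{u}_j$ together with (\ref{hamcon}); this is the sole place the argument is not literally scalar.
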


\bibliography{hjsystem}
\bibliographystyle{plain}

\end{document}